\newenvironment{claimproof}{\par
	\pushQED{\hfill$\diamondsuit$}%
	\normalfont \topsep6\p@\@plus6\p@\relax
	\trivlist
	\item[]\ignorespaces
}{%
	\popQED\endtrivlist\@endpefalse
}
\newcommand{\N}{{\ensuremath{\mathbb{N}}}}
\newcommand{\C}{{\ensuremath{\mathbb{C}}}}
\newcommand{\stkout}[1]{\ifmmode\text{\sout{\ensuremath{#1}}}\else\sout{#1}\fi}
\DeclareMathOperator{\supp}{supp}
\DeclareMathOperator{\diag}{diag}
\DeclareMathOperator{\id}{id}
\DeclareMathOperator{\spn}{span}
\newcommand{\Sph}{\ensuremath{\mathbb{S}}}
\newcommand{\ca}[1]{\ensuremath{\mathcal{#1}}}
\newcommand{\abs}[1]{\ensuremath{ {\left| #1 \right|} }}
\newcommand{\norm}[1]{\ensuremath{ {\left\| #1 \right\|} }}
\newtheorem{proposition}{Proposition}[section]
\newtheorem{lemma}[proposition]{Lemma}
\newtheorem{theorem}[proposition]{Theorem}
\newtheorem{claim}{Claim}
\theoremstyle{definition}
\newtheorem{example}[proposition]{Example}
\newtheorem{case}{Case}
\newtheorem{remark}[proposition]{Remark}
\numberwithin{equation}{section}
\newlength{\leftstackrelawd}
\newlength{\leftstackrelbwd}
\def\leftstackrel#1#2{\settowidth{\leftstackrelawd}%
	{${{}^{#1}}$}\settowidth{\leftstackrelbwd}{$#2$}%
	\addtolength{\leftstackrelawd}{-\leftstackrelbwd}%
	\leavevmode\ifthenelse{\lengthtest{\leftstackrelawd>0pt}}%
	{\kern-.5\leftstackrelawd}{}\mathrel{\mathop{#2}\limits^{#1}}}
\newcommand{\tripprox}{\mathrel{\setbox0\hbox{$\approx$}%
		\mbox{\makebox[0pt][l]{\raisebox{0.48\ht0}{$\approx$}}$\approx$}}}
\begin{document}
	
	\title[An extension  of Petek-\v{S}emrl preserver theorems to SMAs]{An extension of Petek-\v{S}emrl preserver theorems for Jordan embeddings of structural matrix algebras}
	
	\author{Ilja Gogi\'{c}, Mateo Toma\v{s}evi\'{c}}
	
	\address{I.~Gogi\'c, Department of Mathematics, Faculty of Science, University of Zagreb, Bijeni\v{c}ka 30, 10000 Zagreb, Croatia}
	\email{ilja@math.hr}
	
	\address{M.~Toma\v{s}evi\'c, Department of Mathematics, Faculty of Science, University of Zagreb, Bijeni\v{c}ka 30, 10000 Zagreb, Croatia}
	\email{mateo.tomasevic@math.hr}
	
	\thanks{We thank Tatjana Petek for her comments on the first  version of the paper. We are also very grateful to the referee for their meticulous review of the paper and for the insightful comments and suggestions they provided.
	}
	
	\keywords{Jordan homomorphisms, spectrum preserver, commutativity preserver, matrix algebras, structural matrix algebras}

	\subjclass[2020]{47B49, 15A27, 16S50, 16W20}
	
	\date{\today}
	
	\maketitle
	\begin{abstract}
		Let $M_n$ be the algebra of $n \times n$ complex matrices and $\mathcal{T}_n \subseteq M_n$ the corresponding upper-triangular subalgebra. In their influential work, Petek and \v{S}emrl characterize Jordan automorphisms of $M_n$ and $\mathcal{T}_n$, when $n \geq 3$, as (injective in the case of $\mathcal{T}_n$)  continuous commutativity and spectrum preserving maps $\phi :  M_n \to M_n$ and $\phi : \mathcal{T}_n \to \mathcal{T}_n$. Recently, in a joint work with Petek, the authors extended this characterization to the
		maps $\phi : \mathcal{A} \to M_n$, where $\mathcal{A}$ is an arbitrary subalgebra of $M_n$ that contains $\mathcal{T}_n$. In particular, any such map $\phi$ is a Jordan embedding and hence of the form  $\phi(X)=TXT^{-1}$ or $\phi(X)=TX^tT^{-1}$, for some  invertible matrix $T\in M_n$. 
		
		In this paper we further extend the aforementioned results in the context of structural matrix algebras (SMAs), i.e.\ subalgebras $\mathcal{A}$ of $M_n$ that contain all diagonal matrices. More precisely, we provide both a necessary and sufficient condition for an SMA $\mathcal{A}\subseteq M_n$ such that any injective continuous commutativity and spectrum preserving map $\phi: \mathcal{A} \to M_n$ is necessarily a Jordan embedding. In contrast to the previous cases, such maps $\phi$ no longer need to be multiplicative/antimultiplicative, nor rank-one preservers.
	\end{abstract}
	
	\section{Introduction}
	A \emph{Jordan homomorphism} between associative algebras (or just rings) $\ca{A}$ and $\ca{B}$  is a linear (additive) map $\phi : \ca{A} \to \ca{B}$ such that 
	$$\phi(ab+ba) = \phi(a)\phi(b) + \phi(b)\phi(a), \qquad \text{ for all }a,b \in \ca{A}.$$ 
	When the algebras (rings) are $2$-torsion-free, this is equivalent to
	$$\phi(a^2) = \phi(a)^2, \qquad \text{ for all }a \in \ca{A}.$$
	The theory of Jordan homomorphisms originates with Jordan algebras, a class of nonassociative algebras (similar to Lie algebras) which appear in various areas, including functional analysis and theoretical quantum mechanics. Since the vast majority of Jordan algebras with relevant applications arise as subalgebras of associative algebras in a natural way, Jordan homomorphisms are often studied precisely in the context of associative algebras. The basic examples of Jordan homomorphisms are algebra homomorphisms and antihomomorphisms. One of the central questions in the Jordan theory is determining conditions on algebras $\ca{A}$ and $\ca{B}$  such that any Jordan homomorphism $\phi : \ca{A} \to \ca{B}$  (possibly with some additional assumptions such as surjectivity)  is multiplicative or antimultiplicative, or more generally, can be written as a suitable combination of such maps. This problem has a rich and long history (see e.g.\ \cite{Benkovic, Herstein, JacobsonRickart, Smiley}).
	
	\smallskip 
	
	Our paper narrows down the scope to matrix algebras. Let $M_n$ be the algebra of $n \times n$ matrices over the field of complex numbers. It is well-known that any nonzero Jordan homomorphism $\phi:M_n\to M_n$ is precisely of the form
	\begin{equation}\label{eq:inner}
		\phi(X) = TXT^{-1} \qquad \text{or}   \qquad \phi(X) = TX^{t}T^{-1},
	\end{equation}
	for some invertible matrix $T \in M_n$ (see e.g.\ \cite{Herstein,Semrl2}). Many attempts were made to characterize Jordan homomorphisms, particularly on matrix algebras, using simple preserving properties. In fact, a basis for this paper is the following (nonlinear) preserver problem which elegantly characterizes Jordan automorphisms of $M_n$:
	\begin{theorem}[{\cite[Theorem~1.1]{Semrl}}]\label{thm:Semrl}
		Let $\phi : M_n \to M_n, n \ge 3$ be a continuous map which preserves commutativity and spectrum. Then there exists an invertible matrix $T \in M_n$ such that $\phi$ is of the form \eqref{eq:inner}.
	\end{theorem}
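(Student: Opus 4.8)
The plan is to exploit the two preserving properties in tandem: spectrum preservation provides pointwise, numerical control of eigenvalues, commutativity preservation encodes the lattice of commutants, and continuity allows passage from a dense, generic subset to all of $M_n$. Write $\sigma(X)$ for the spectrum. The first and cleanest consequence of spectrum preservation is that $\phi$ maps the open dense set $\mathcal{N}$ of matrices with $n$ distinct eigenvalues into itself: if $\sigma(X)$ consists of $n$ distinct values then so does $\sigma(\phi(X))=\sigma(X)$, forcing $\phi(X)\in\mathcal{N}$. Since each $A\in\mathcal{N}$ is non-derogatory, its commutant $\mathcal{M}_A:=\{Y: AY=YA\}$ is an $n$-dimensional maximal abelian subalgebra (MASA), namely the algebra of polynomials in $A$.

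Commutativity preservation then promotes this to a statement about MASAs: if $B$ commutes with $A\in\mathcal{N}$ then $\phi(B)$ commutes with $\phi(A)$, so $\phi(\mathcal{M}_A)\subseteq\mathcal{M}_{\phi(A)}$. Fix a diagonal $A_0\in\mathcal{N}$; every diagonal matrix with distinct entries commutes with $A_0$, hence its image lies in the single MASA $\mathcal{M}:=\mathcal{M}_{\phi(A_0)}$, and by continuity (as $\mathcal{M}$ is closed and distinct-entry diagonals are dense in the diagonal algebra $\mathcal{D}$) we get $\phi(\mathcal{D})\subseteq\mathcal{M}$. Because $\phi(A_0)\in\mathcal{N}$ is diagonalizable, $\mathcal{M}$ is conjugate to $\mathcal{D}$; replacing $\phi$ by $X\mapsto S^{-1}\phi(X)S$ --- which is again continuous and preserves commutativity and spectrum --- we may assume $\phi(\mathcal{D})\subseteq\mathcal{D}$. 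Now for $D=\diag(d_1,\dots,d_n)$ with distinct entries, $\phi(D)\in\mathcal{D}\cap\mathcal{N}$ has the same spectrum, so its diagonal is a permutation of $(d_1,\dots,d_n)$. Such $D$ are parametrized by $\{d\in\C^n : d_i\neq d_j\}$, which is connected (the complement of finitely many complex hyperplanes), so the permutation is locally constant, hence a fixed $\pi\in S_n$, and by continuity $\phi(D)=P_\pi D P_\pi^{-1}$ on all of $\mathcal{D}$. Composing with conjugation by $P_\pi^{-1}$, we may finally assume $\phi|_{\mathcal{D}}=\id$; in particular $\phi(\lambda I)=\lambda I$ for every scalar.

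It remains to rigidify $\phi$ off the diagonal and recover \eqref{eq:inner}. For a fixed pair $i\neq j$, the matrices commuting with all $E_{kk}$, $k\neq i,j$ --- a copy of $M_2$ in the $(i,j)$ block together with the remaining now-fixed diagonal --- are mapped into themselves, since each $E_{kk}$ is fixed by $\phi$. On such a block, the fixed diagonal together with $\sigma$ determines only the product $x_{ij}x_{ji}$ of the off-diagonal pair, so a single block leaves the individual entries ambiguous up to a scaling $x_{ij}\mapsto c\,x_{ij}$, $x_{ji}\mapsto c^{-1}x_{ji}$. Commutativity relations between overlapping blocks for distinct index pairs (available only for $n\ge 3$) must then remove this ambiguity and force a single consistent alternative across all pairs, so that $\phi$ agrees on the dense generic set with one global map $X\mapsto X$ or $X\mapsto X^{t}$ (composed with the earlier conjugations). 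A continuity argument extends this to all of $M_n$; equivalently, once $\phi$ coincides with such a map on a dense set one identifies it as a Jordan homomorphism and invokes the classification \eqref{eq:inner} directly.

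I expect this gluing to be the main obstacle. A single $(i,j)$ block is a genuine copy of $M_2$, precisely the case excluded from the theorem, and commutativity within one block is far too weak to remove the one-parameter scaling freedom or to distinguish a matrix from its transpose. The rigidity must therefore come entirely from the overlaps between blocks for distinct index pairs, which exist only when $n\ge 3$, together with the requirement that the local choices assemble into one global map. Making this simultaneous determination precise --- fixing the conjugating matrix $T$ and the single global alternative between $\phi(X)=TXT^{-1}$ and $\phi(X)=TX^{t}T^{-1}$, while controlling the non-generic (derogatory or non-diagonalizable) matrices purely by continuity --- is the technical heart of the argument and the reason the hypothesis $n\ge 3$ is indispensable.
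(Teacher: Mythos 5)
First, a framing remark: the paper does not prove Theorem \ref{thm:Semrl} at all --- it is quoted from \cite{Semrl}, and the introduction only records that \v{S}emrl's proof rests on a clever application of the Fundamental Theorem of Projective Geometry. Your attempt must therefore stand on its own. Its first two paragraphs are correct and reproduce the standard opening normalization (essentially Lemma 2.1 of \cite{Semrl}, the same argument the paper reuses in Claim \ref{cl:identity on diagonals}): $\phi$ preserves the set of matrices with $n$ distinct eigenvalues, maps the diagonal MASA into a MASA, and after two conjugations one may assume $\phi|_{\mathcal{D}_n}=\mathrm{id}$. But everything after that --- which you yourself call ``the technical heart'' --- is asserted rather than proved, and the plan you sketch for it would not work.

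Two concrete obstructions. First, the subalgebras you want to glue (for fixed $i\neq j$, the matrices commuting with all $E_{kk}$, $k\notin\{i,j\}$) have dimension $n+2$, so their union over all pairs is a closed, nowhere dense subset of $M_n$. Determining $\phi$ on this union says nothing about $\phi$ on ``the dense generic set'': a generic matrix with distinct eigenvalues lies in none of these subalgebras, and its commutant --- the polynomials in it --- generically meets them only in scalar matrices, so commutativity preservation provides no bridge either. Hence your closing step ``a continuity argument extends this to all of $M_n$'' has nothing to extend from. Second, within a single block the ambiguity is not a one-parameter scaling: the restriction of $\phi$ to such a block is a continuous commutativity and spectrum preserver of an algebra isomorphic to $M_2\oplus\mathcal{D}_{n-2}$, and on $M_2$ such maps need not be linear at all --- this is exactly \cite[Example~7]{PetekSemrl}, which the present paper itself recycles (as the map $\psi$ in Case 1 of the proof of (ii)$\implies$(i) of Theorem \ref{thm:main result}) to show what goes wrong in the $2\times 2$ situation. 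So asserting that the fixed diagonal together with $\sigma$ determines only the product $x_{ij}x_{ji}$, leaving a scalar gauge freedom, begs the question: ruling out nonlinear behavior on each block is part of what must be proved. (Moreover, even granting the blockwise picture, overlaps could never ``force'' $\phi=\mathrm{id}$ or $\phi=(\cdot)^t$ after your normalizations, since conjugation by an invertible diagonal matrix survives them; the most one could hope to show is that the blockwise scalings cohere into a single diagonal conjugation.) What is missing is precisely the content of the known proofs: either the long elementary rank-one-perturbation analysis of Petek and \v{S}emrl \cite{PetekSemrl} (which needed extra hypotheses), or \v{S}emrl's argument in \cite{Semrl}, which extracts from $\phi$ a map on eigenvector lines of matrices with distinct eigenvalues and invokes the Fundamental Theorem of Projective Geometry to produce the matrix $T$, only afterwards using continuity and spectrum preservation to arrive at the form \eqref{eq:inner}. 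Your outline locates the difficulty correctly but contains no idea for crossing it.
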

	
	The first version of this result was actually formulated by Petek and \v Semrl in \cite{PetekSemrl}, where it contained an additional assumption, that $\phi$ preserves rank-one matrices, or commutativity in both directions. Its current form was obtained by \v{S}emrl a decade later in \cite{Semrl}, with the necessity of all assumptions being established via counterexamples. Furthermore, owing to a clever application of the Fundamental theorem of projective geometry, the proof of Theorem \ref{thm:Semrl} turns out to be somewhat shorter than the proof of the initial version, which relied entirely on elementary calculations.
	
	A direct and natural sequel to this result was Petek's paper \cite{Petek} concerning the algebra $\ca{T}_n$ of all upper-triangular matrices in $M_n$. More precisely, Petek gives a complete description of continuous commutativity and spectrum preservers $\ca{T}_n \to \ca{T}_n$. The general form of these maps is somewhat nontrivial, but for $n\geq 3$, by additionally assuming injectivity (or surjectivity), one obtains precisely the maps of the form \eqref{eq:inner}, for a suitable invertible matrix $T \in M_n$. In particular, all such maps are  Jordan automorphisms of $\ca{T}_n$. More recently, in a joint work with Petek, the authors in \cite{GogicPetekTomasevic} obtained a generalization of the aforementioned result to the next natural case: subalgebras $\ca{A} \subseteq M_n$ which contain the upper-triangular algebra $\ca{T}_n$. As it turns out, these are precisely the block upper-triangular algebras (see \cite{WangPanWang}). The result obtained was even more general, showing that any injective continuous commutativity and spectrum preserver $\phi : \ca{A} \to M_n$, where $n \ge 3$ and $\ca{A} \subseteq M_n$ is a block upper-triangular algebra, is in fact a Jordan embedding and therefore of the form \eqref{eq:inner}. 
	
	In order to further extend (and unify) the aforementioned results, the next logical step would be to consider the  subalgebras of $M_n$ which contain all diagonal matrices.  As it turns out, these are precisely unital subalgebras of $M_n$ spanned by some set of matrix units (see \cite[Proposition 3.1]{GogicTomasevic}). Such algebras were originally introduced in the literature by van Wyk in \cite{VanWyk} under the name \emph{structural matrix algebras} (abbreviated as SMAs). They are closely related to incidence algebras and have been studied in many papers, such as \cite{Akkurt, Akkurt2, AkkurtBarkerWild, BeslagaDascalescu, BeslagaDascalescu2, BrusamarelloFornaroliKhrypchenko1, BrusamarelloFornaroliKhrypchenko2, Coelho, Coelho2,GarcesKhrypchenko, GarcesKhrypchenko2, SlowikVanWyk,VanWyk}. In our recent paper \cite{GogicTomasevic}, we determined the general form of Jordan embeddings between two SMAs in $M_n$  (Theorem \ref{thm:Jordan embeddings of SMAs} further in the text), hence extending Coelho's description of their algebra automorphisms \cite[Theorem C]{Coelho}.  In the same paper \cite{GogicTomasevic} we also showed that for an arbitrary SMA $\ca{A} \subseteq M_n$ any linear unital rank-one preserver $\ca{A}\to M_n$ is necessarily a Jordan embedding \cite[Theorem~5.7]{GogicTomasevic}, while the converse fails in general. Furthermore, we described the general form of linear rank preservers $\ca{A} \to M_n$, as maps $X \mapsto S\left(PX + (I-P)X^t\right)T$, for some invertible matrices $S,T \in M_n$ and a central idempotent $P\in\ca{A}$.
	
	\smallskip
	
	The purpose of this paper is to extend the aforementioned Petek-\v Semrl preserver theorems for Jordan embeddings in the context of SMAs. The main result of the paper is Theorem \ref{thm:main result}, which provides both a necessary and sufficient condition on an SMA $\ca{A} \subseteq M_n$  such that any injective continuous commutativity and spectrum preserving map $\mathcal{A} \to M_n$ is necessarily a Jordan embedding. 
	
	\smallskip
	
	This paper is organized as follows. We begin Section \S\ref{sec:preliminaries} by providing relevant terminology, notation and results that will be used throughout the paper. In Section \S\ref{sec:mainresult}, we state and prove our main result, Theorem \ref{thm:main result}. In Section \S\ref{sec:examplesandremarks} we first provide examples which demonstrate that, in contrast to the previous cases of $M_n$, $\ca{T}_n$, and the general block upper-triangular subalgebras of $M_n$, Jordan embeddings of arbitrary SMAs which satisfy the condition (i) of Theorem \ref{thm:main result}, no longer need to be multiplicative/antimultiplicative, nor rank(-one) preservers (Examples \ref{ex:noMAMP} and \ref{ex:SMA-nonlinearOK-nontrivialtransitive}). Finally, we also discuss  the indispensability of all the assumptions of Theorem \ref{thm:main result} (Remark \ref{rem:indispensability}).
	
	\section{Notation and Preliminaries}\label{sec:preliminaries}
	We shall follow the same notation from \cite{GogicTomasevic}. More specifically, 
	for any set $S$, by $\abs{S}$ we denote its cardinality. If $\rho$ is a binary relation on a set $S$, for a fixed $x \in S$ by $\rho(x)$ and $\rho^{-1}(x)$ we denote its image and preimage by $\rho$, respectively, i.e.
	$$\rho(x)= \{y \in S : (x,y) \in \rho\}, \qquad \rho^{-1}(x)= \{y \in S : (y,x) \in \rho\}.$$
	For integers $k \le l$, by $[k,l]$ we denote the set of all integers between $k$ and $l$, inclusive.
	
	\smallskip
	
	Let $n \in \N$.
	\begin{itemize}
		\item We denote by $\Delta_n$ the diagonal relation $\{(i,i) :1 \le i \le n\}$ on $[1,n]$.
		\item As already stated, by $M_n$ we denote the algebra of all $n\times n$ complex matrices. Further, by $\ca{T}_n$ and $\ca{D}_n$ we denote the subalgebras of all upper-triangular and diagonal matrices of $M_n$, respectively. 
		\item For $A,B \in M_n$, by $A \leftrightarrow B$ and $A \perp B$ we denote that  $AB = BA$ and $AB = BA = 0$, respectively.
		\item For $A \in M_n$ by $k_A(x) := \det(x I-A)$ we denote the characteristic polynomial of $A$.
		\item We denote by $\diag(\lambda_1,\ldots,\lambda_n) \in \ca{D}_n$ the diagonal matrix with diagonal entries equal to $\lambda_1,\ldots,\lambda_n\in \C$ (in this order). The similar notation will be used for block diagonal matrices and the corresponding subalgebras.
		\item By $\Lambda_n\in \ca{D}_n$ we denote the diagonal matrix  $\diag(1,\ldots, n)$.
		\item For $i,j \in [1,n]$ we denote by $E_{ij}\in M_n$ the standard matrix unit with $1$ at the position $(i,j)$ and $0$ elsewhere. Similarly, the canonical basis vectors of $\C^n$ are denoted by $e_1, \ldots, e_n$.
		\item For vectors $u,v \in \C^n$ by $u \parallel v$ we denote  that the set $\{u,v\}$ is linearly dependent. The same notation is used for matrices.
		\item For any permutation $\pi \in S_n$ (where, as usual, $S_n$ denotes the symmetric group), by \begin{equation}\label{eq:definition of permutation matrix}
			R_{\pi} := \sum_{k=1}^n E_{k\pi(k)}
		\end{equation} we denote the permutation matrix in $M_n$ associated to $\pi$.
	\end{itemize}
	
	As any matrix $A = [A_{ij}]_{i,j=1}^n \in M_n$ can be understood as a map $[1,n]^2 \to \C, (i,j) \mapsto A_{ij}$, we can consider its \emph{support }$\supp A$ as the set of all indices $(i,j) \in [1,n]^2$ such that $A_{ij} \ne 0$. Moreover, for a set $S \subseteq [1,n]^2$ we also say that the matrix $A$ is \emph{supported in $S$} if $\supp A \subseteq S$. As is customary, we identify vectors in ${\mathbb{C}}^{n}$ with their corresponding column matrices and, in a similar manner, we define their support. 
	
	\smallskip
	
	As usual, given a unital complex algebra $\ca{A}$, by $Z(\ca{A})$ and $\ca{A}^\times$  we denote its centre and the set of all its invertible elements, respectively. By a \emph{quasi-order} on $[1,n]$ we mean a reflexive transitive relation $\rho \subseteq [1,n]^2$. For a quasi-order $\rho$ we define the unital subalgebra of $M_n$ by
	$$\ca{A}_\rho :=\{A \in M_n : \supp A \subseteq \rho\}=\spn\{E_{ij} : (i,j) \in \rho\},$$
	which we call a \emph{structural matrix algebra (SMA) defined by the quasi-order $\rho$}. Throughout the paper we also write 
	$\rho^\times$ for $\rho\setminus \Delta_n$.

	\smallskip
	
	Following \cite{Coelho}, given a quasi-order $\rho$ on $[1,n]$ we say that a map $g : \rho \to \C^\times$ is \emph{transitive} if 
	$$g(i,j)g(j,k) = g(i,k),  \qquad \text{ for all } (i,j), (j,k) \in \rho.$$
	A transitive map $g : \rho \to \C^\times$ is said to be \emph{trivial} if there exists a map $s : [1,n] \to \C^\times$ such that $g$ \emph{separates through $s$}, that is
	$$g(i,j) = \frac{s(i)}{s(j)}, \qquad \text{ for all }(i,j) \in \rho.$$
	Every transitive map $g$ induces an (algebra) automorphism $g^*$ of $\ca{A}_\rho$, defined on the basis of  matrix units as
	\begin{equation}\label{eq:inducedauto}
		g^*(E_{ij}) = g(i,j)E_{ij}, \qquad \text{ for all }(i,j) \in\rho.
	\end{equation}
	We explicitly state the results from our recent papers \cite{GogicPetekTomasevic} and \cite{GogicTomasevic}, which will be essentially used later on a few occasions.
	
	\begin{theorem}[{\cite[Theorem 1.4]{GogicPetekTomasevic}}]\label{thm:parabolic}
		Let $\ca{A} \subseteq M_n$ be a block upper-triangular subalgebra. Then every injective continuous commutativity and spectrum preserver $\phi : \ca{A} \to M_n$ is of the form \eqref{eq:inner} for some $T \in M_n^\times$, and in particular a Jordan embedding.
	\end{theorem}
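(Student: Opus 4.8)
The plan is to recover the conjugating matrix $T$ from the incidence geometry that $\phi$ is forced to preserve. Writing $\ca{A}$ as the block upper-triangular algebra attached to a partition of $[1,n]$ into consecutive blocks, note that $\ca{A}\supseteq\ca{D}_n$ and that $\ca{A}$ encodes a flag $0\subsetneq V_1\subsetneq\cdots\subsetneq V_r=\C^n$, where the $V_k$ are spanned by initial segments of the blocks. Since every map of the form \eqref{eq:inner} preserves both commutativity ($A\leftrightarrow B$) and spectrum, and the single-block case is exactly \ref{thm:Semrl}, after composing $\phi$ with suitable maps of the form \eqref{eq:inner} we may normalize so that $\phi$ fixes a chosen maximal commuting family of diagonalizable elements (e.g.\ a flag-compatible resolution of the identity into rank-one idempotents). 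The heart of the matter is then to show that $\phi$ preserves the set of rank-one idempotents of $\ca{A}$ together with their relations $\leftrightarrow$ and $\perp$; this geometric data will determine $\phi$ up to \eqref{eq:inner}.

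The first real step is to pin down the images of idempotents. Spectrum preservation gives that $\phi(P)$ has spectrum $\{0,1\}$ with the correct multiplicities whenever $P$ is idempotent, but this alone does not force $\phi(P)$ to be idempotent, since a non-diagonalizable matrix can share that spectrum. To upgrade it I would work inside maximal commutative subalgebras generated by diagonalizable elements: using continuity together with the fact that $\phi$ preserves $\leftrightarrow$, a maximal commuting family of diagonalizable matrices is carried into a commuting family, and a genericity/limiting argument (approximating by matrices with $n$ distinct, hence spectrum-preserved, eigenvalues) forces the images to be diagonalizable. Consequently idempotents go to idempotents, the rank is read off from eigenvalue multiplicities, and in particular the minimal idempotents of $\ca{A}$ are sent to rank-one idempotents.

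Next I would reconstruct the action on projective geometry. A rank-one idempotent determines a point (its range) and a hyperplane (its kernel), and the relations $\leftrightarrow$ and $\perp$ among such idempotents translate into incidence relations between these points and hyperplanes, cut down by the block structure to the sub-geometry compatible with the flag $V_1\subsetneq\cdots\subsetneq V_r$. Having shown $\phi$ respects these relations, I would invoke the Fundamental Theorem of Projective Geometry to produce a semilinear bijection of $\C^n$ inducing the action of $\phi$ on points, possibly after transposing to account for a point/hyperplane duality (a correlation rather than a collineation). This is the step I expect to be the \emph{main obstacle}: ensuring that the semilinear data recovered locally are globally consistent across the distinct blocks, especially at the seams of the flag where $\ca{A}$ is a proper subalgebra and fewer commutativity relations are available to glue the pieces. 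This is precisely where injectivity of $\phi$ is indispensable, since it rules out the collapsing behaviour responsible for the exotic non-injective commutativity/spectrum preservers of $\ca{T}_n$.

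Finally I would eliminate the residual freedom. The Fundamental Theorem supplies the semilinear map only up to a field automorphism $\sigma$ of $\C$; continuity of $\phi$ forces $\sigma$ to be continuous, hence $\sigma\in\{\id,\,\overline{\,\cdot\,}\}$, and spectrum preservation then excludes complex conjugation, which would send an eigenvalue $\lambda$ to $\overline\lambda$. Thus the induced map is genuinely linear, given by a single invertible $T\in M_n^\times$, and the collineation/correlation dichotomy yields exactly $\phi(X)=TXT^{-1}$ or $\phi(X)=TX^tT^{-1}$. Undoing the normalization preserves this form, and since both maps in \eqref{eq:inner} are injective Jordan homomorphisms, $\phi$ is in particular a Jordan embedding, completing the proof.
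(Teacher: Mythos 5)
Before anything else: the paper you are asked to compare against contains \emph{no proof} of this statement. Theorem \ref{thm:parabolic} is imported verbatim from \cite[Theorem~1.4]{GogicPetekTomasevic} and used as a black box (in the $n=3$ step of Theorem \ref{thm:main result} and via Claim \ref{cl:restriction of phi}). So the only meaningful comparison is with the proof in that reference, whose visible skeleton --- preservation of the characteristic polynomial, normalization to the identity on $\ca{D}_n$, preservation of rank-one matrices (the ``Step 5.2'' alluded to in the remark following Example \ref{ex:Rnotclosed}) --- your outline does mirror, as does your reliance on the Fundamental Theorem of Projective Geometry, which is the engine of \cite{Semrl}.

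That said, your proposal is a strategy rather than a proof, and the genuine gap sits exactly where you yourself place ``the main obstacle'': nothing in your text actually carries out the projective-geometric step for a \emph{proper} block upper-triangular algebra. Concretely, a rank-one idempotent $xy^*$ (with $y^*x=1$) lies in $\ca{A}$ precisely when the last flag block meeting $\supp x$ coincides with the first flag block meeting $\supp y$; hence, although every point of projective space occurs as a range, the hyperplanes (kernels) attached to a given point are constrained by the flag, and the point--hyperplane incidence structure you want to feed into the Fundamental Theorem is only a sub-geometry of the one available for $M_n$. The theorem does not apply off the shelf to such a sub-geometry; moreover, $\phi$ preserves commutativity in one direction only, so even the well-definedness of the induced map on points (independence of the choice of $y$) and its collinearity preservation require real arguments --- this is precisely the delicate content of \cite{Semrl} in the full-matrix case and of the cited proof in the block case, and it is the part you defer rather than supply. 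Declaring injectivity to be ``indispensable'' at this juncture is an expectation, not an argument.

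Two further, repairable, defects. First, your diagonalizability step is unsound as stated: diagonalizable matrices are not closed under limits, so approximating an idempotent by matrices with $n$ distinct eigenvalues cannot by itself force $\phi(P)$ to be diagonalizable. The standard repair is the conjugation trick of Claims \ref{cl:identity on diagonals} and \ref{cl:preserves diagonalizability}: $\phi(S\Lambda_n S^{-1})=T\Lambda_n T^{-1}$ for a single $T\in M_n^\times$, and then $T^{-1}\phi(S(\cdot)S^{-1})T$ is again a preserver fixing $\Lambda_n$, whence $\phi(SDS^{-1})=TDT^{-1}$ for \emph{every} $D\in\ca{D}_n$ simultaneously. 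Second, both the statement and your argument (which invokes Theorem \ref{thm:Semrl}) tacitly require $n\ge 3$: for $n=2$ the conclusion fails, since $M_2$ admits an injective continuous commutativity and spectrum preserver that is not a Jordan homomorphism --- the map $\psi$ built from \cite[Example~7]{PetekSemrl} in Case 1 of the proof of (ii)$\implies$(i) of Theorem \ref{thm:main result} is exactly such an example, and $\ca{T}_2$ behaves analogously by \cite[Theorem~4]{Petek}.
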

	
	\begin{theorem}[{\cite[Theorem 4.9]{GogicTomasevic}}]\label{thm:Jordan embeddings of SMAs}
		Let $\ca{A}_\rho \subseteq M_n$ be an SMA and let $\phi : \ca{A}_\rho \to M_n$ be a Jordan embedding. Then there exists an invertible matrix $S \in M_n^\times$, a central idempotent $P \in \ca{A}_\rho$, and a transitive map $g : \rho \to \C^\times$ such that
		$$\phi(X) = S(Pg^*(X) + (I-P)g^*(X)^t)S^{-1}, \qquad \mbox{ for all } X \in \ca{A}_\rho.$$
	\end{theorem}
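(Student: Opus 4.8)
The plan is to reduce everything to the action of $\phi$ on the matrix units and to exploit that a Jordan homomorphism preserves squares and the Jordan product $X\circ Y:=XY+YX$. First I would analyze the images of the diagonal units. Since $E_{ii}^2=E_{ii}$ and $E_{ii}\circ E_{jj}=0$ for $i\neq j$, the matrices $p_i:=\phi(E_{ii})$ are idempotents with $p_ip_j+p_jp_i=0$; multiplying this relation by $p_i$ on either side and subtracting shows that for idempotents it forces $p_ip_j=p_jp_i=0$, so the $p_i$ are mutually orthogonal. By injectivity each $p_i\neq 0$, hence $\operatorname{rank}p_i\geq 1$; since orthogonal idempotents have independent ranges and $\sum_i p_i=\phi(I)$ is an idempotent of $M_n$, we get $\sum_i\operatorname{rank}p_i\leq n$, forcing $\operatorname{rank}p_i=1$ for every $i$ and $\phi(I)=I$. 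Thus $\phi$ is unital and the $p_i$ form a complete system of $n$ rank-one orthogonal idempotents, so after conjugating $\phi$ by a suitable $S\in M_n^\times$ I may assume $p_i=E_{ii}$ for all $i$.

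Next I would pin down $\phi(E_{ij})$ for $(i,j)\in\rho^\times$. The relations $E_{ii}\circ E_{ij}=E_{ij}=E_{jj}\circ E_{ij}$, together with completeness and orthogonality of the $p_i=E_{ii}$, localize $\phi(E_{ij})$ to the Peirce components $(i,j)$ and $(j,i)$, i.e. $\phi(E_{ij})=\alpha_{ij}E_{ij}+\beta_{ij}E_{ji}$ for scalars $\alpha_{ij},\beta_{ij}$. Since $E_{ij}^2=0$, applying $\phi$ gives $\alpha_{ij}\beta_{ij}(E_{ii}+E_{jj})=0$, so $\alpha_{ij}\beta_{ij}=0$; combined with $\phi(E_{ij})\neq 0$ this shows exactly one of $\alpha_{ij},\beta_{ij}$ is nonzero. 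Hence to each edge $(i,j)\in\rho^\times$ there is attached an \emph{orientation} --- forward, $\phi(E_{ij})=g(i,j)E_{ij}$, or backward, $\phi(E_{ij})=g(i,j)E_{ji}$ --- together with a nonzero scalar $g(i,j)\in\C^\times$.

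The decisive step is to show that this orientation is constant on each weakly connected component of $\rho$, which is precisely what produces a central idempotent. For distinct $i,j,k$ with $(i,j),(j,k)\in\rho$ (so $(i,k)\in\rho$ by transitivity) I would expand $\phi(E_{ik})=\phi(E_{ij})\circ\phi(E_{jk})$ in each orientation pattern: matching orientations reproduce a nonzero multiple of $E_{ik}$ (forward) or $E_{ki}$ (backward) and simultaneously yield $g(i,k)=g(i,j)g(j,k)$, whereas a clash of orientations makes the right-hand side vanish, contradicting injectivity since $E_{ik}\neq 0$. Running these products through a full matrix block $\{E_{ij}:i,j\in C\}$ attached to a strongly connected component $C$ forces the orientation to be constant on $C$, and running them across a comparability $C<C'$ (pick a within-$C$ edge composed with a between-edge, then a between-edge composed with a within-$C'$ edge) propagates the common value. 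Since any two indices in the same weak component are linked by a zig-zag of such comparabilities, the orientation is globally constant there. Collecting the forward components into $P:=\sum_{i\ \mathrm{forward}}E_{ii}$ gives a central idempotent of $\ca{A}_\rho$, the identities $g(i,k)=g(i,j)g(j,k)$ say exactly that $g$ (extended by $g(i,i)=1$) is transitive, and reassembling yields $\phi(X)=Pg^*(X)+(I-P)g^*(X)^t$ in the conjugated coordinates, i.e. the asserted form after undoing the conjugation by $S$.

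I expect the main obstacle to be the orientation-consistency argument of the last paragraph: keeping the forward/backward Peirce bookkeeping straight across one-directional edges joining distinct strongly connected components, and checking that every clash case genuinely collapses to $0$ (so that injectivity can be invoked to rule it out), is where the real content lies. By contrast, the rank computation and the Peirce localization are comparatively routine, and the transitivity of $g$ then falls out of the same product identities used to establish consistency.
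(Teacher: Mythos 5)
This statement is quoted from \cite[Theorem 4.9]{GogicTomasevic} and is not proved in the present paper, so your proposal can only be measured against the statement itself. Your first two steps are sound: the argument that the $\phi(E_{ii})$ are mutually orthogonal idempotents, the rank count forcing them to be rank one with $\phi(I)=I$, the conjugation reducing to $\phi(E_{ii})=E_{ii}$, the Peirce localization $\phi(E_{ij})=\alpha_{ij}E_{ij}+\beta_{ij}E_{ji}$, and the dichotomy $\alpha_{ij}\beta_{ij}=0$ are all correct.

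The gap is in the orientation-propagation step, exactly where you predicted the difficulty would lie. Your only mechanism for transferring orientation between edges is composition through a shared middle index: $(i,j),(j,k)\in\rho$, expand $\phi(E_{ik})=\phi(E_{ij})\circ\phi(E_{jk})$, and invoke injectivity when a clash makes the right-hand side vanish. This cannot reach pairs of edges that share only a \emph{source} or only a \emph{target}, and weak connectivity is built precisely out of such zigzags. Concretely, take $\rho=\Delta_3\cup\{(1,2),(3,2)\}$, a legitimate quasi-order, so that $\ca{A}_\rho=\spn\{E_{11},E_{22},E_{33},E_{12},E_{32}\}$: the component $\{1,2,3\}$ is weakly connected, but there are no composable pairs of distinct off-diagonal edges, no strongly connected components of size greater than one, and no within-component edges for your ``comparability'' propagation to start from. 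As written, your argument says nothing about the relative orientation of $(1,2)$ and $(3,2)$; yet consistency genuinely needs proof, since the linear injective map fixing all $E_{ii}$ with $E_{12}\mapsto E_{12}$, $E_{32}\mapsto E_{23}$ is not a Jordan homomorphism (it sends the square-zero element $E_{12}+E_{32}$ to $E_{12}+E_{23}$, whose square is $E_{13}\neq 0$). The missing idea is to also use preservation of \emph{zero} Jordan products of non-composable edges: for $i\neq k$ one has $E_{ij}\circ E_{kj}=0$, whereas a clash of orientations, say $\phi(E_{ij})=\alpha E_{ij}$ and $\phi(E_{kj})=\beta E_{jk}$, gives $\phi(E_{ij})\circ\phi(E_{kj})=\alpha\beta E_{ik}\neq 0$, a contradiction; the shared-source case is symmetric. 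You also need the two-index relation $E_{ij}\circ E_{ji}=E_{ii}+E_{jj}$, which your ``distinct $i,j,k$'' case omits: it is required both for orientation consistency in two-element strongly connected components and for $g(i,j)g(j,i)=1$, i.e.\ transitivity of $g$ along cycles of length two. With these additions, orientation is forced to agree across every pair of edges sharing an endpoint, hence along every zigzag, and the rest of your assembly (the central idempotent $P$, transitivity of $g$, and the formula on matrix units extended by linearity) goes through.
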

	
	\begin{theorem}[{\cite[Theorem 3.4]{GogicTomasevic}}]\label{thm:inner diagonalization on SMA}
		Let $\ca{A}_\rho \subseteq M_n$ be an SMA and let $\ca{F} \subseteq \ca{A}_\rho$ be a commuting family of diagonalizable matrices. Then there exists $S \in \ca{A}_\rho^\times$ such that $S^{-1}\ca{F}S \subseteq \ca{D}_n$.
	\end{theorem}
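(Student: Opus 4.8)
The plan is to reduce the simultaneous diagonalization to two controlled stages — inside the diagonal blocks and then across them — keeping the conjugating matrix inside $\ca{A}_\rho^\times$ at every step. First I would record the block structure of $\ca{A}_\rho$ coming from the quasi-order. The relation $i \sim j \Leftrightarrow (i,j),(j,i)\in\rho$ is an equivalence whose classes $C_1,\dots,C_r$ partition $[1,n]$, and $\rho$ induces a genuine partial order on them. Fixing a linear extension $\ell:\{C_1,\dots,C_r\}\to[1,r]$, set $W_{\le t}=\spn\{e_i : \ell(\text{class of }i)\le t\}$. Since $A_{ji}\ne 0$ forces $(j,i)\in\rho$ and hence $\ell(\text{class of }j)\le\ell(\text{class of }i)$, every $A\in\ca{A}_\rho$ is block upper triangular for this filtration, so each $W_{\le t}$ is $\ca{F}$-invariant. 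Moreover each diagonal block $C_a\times C_a$ lies entirely in $\rho$, so the full block-diagonal subalgebra $\bigoplus_a M_{C_a}$ is contained in $\ca{A}_\rho$.

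For the first stage (inside the blocks), the projection $\pi$ onto the block-diagonal part is an algebra homomorphism on block upper triangular matrices, so $\pi(\ca{F})$ is again a commuting family; and since the minimal polynomial of a diagonalizable $A$ has simple roots and its restriction annihilates each diagonal block, every $\pi(A)$ is diagonalizable. Simultaneously diagonalizing each block $M_{C_a}$ produces a block-diagonal $S_1\in(\bigoplus_a M_{C_a})^\times\subseteq\ca{A}_\rho^\times$; replacing $\ca{F}$ by $S_1^{-1}\ca{F}S_1$ (still inside $\ca{A}_\rho$ because $\ca{A}_\rho$ is an algebra) I may assume each $A\in\ca{F}$ has diagonal diagonal-blocks, i.e. $A=D_A+N_A$ with $D_A=\diag(A_{ii})$ and $N_A$ nilpotent satisfying $N_A(W_{\le t})\subseteq W_{\le t-1}$.

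For the second stage (across the blocks), for each index $i$ put $U_i=\spn\{e_j:(j,i)\in\rho\}$. The crucial point is that transitivity of $\rho$ makes $U_i$ an $\ca{F}$-invariant subspace. Restricting the (simultaneously diagonalizable) family to $U_i$ and passing to the top graded piece $U_i\cap W_{\le t}\big/U_i\cap W_{\le t-1}$, where $t=\ell(\text{class of }i)$ and the induced action is the diagonal $\diag(A_{jj})_j$, the class of $e_i$ is a joint eigenvector for the character $\chi(i):=(A_{ii})_{A\in\ca{F}}$. Using the elementary fact that for a diagonalizable operator respecting an invariant filtration the joint eigenspace surjects onto that of the quotient, I would lift it to a genuine common eigenvector $s_i=e_i+v_i$ with $v_i\in U_i\cap W_{\le t-1}$ and $A s_i=\chi(i)(A)\,s_i$ for all $A\in\ca{F}$. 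Assembling the columns gives $S_2=I+M$ with $M=[v_1\mid\cdots\mid v_n]$: here $M\in\ca{A}_\rho$ since each $v_i\in U_i$, and $M$ is nilpotent since it strictly lowers the filtration, so $S_2\in\ca{A}_\rho^\times$ and $S_2^{-1}AS_2=D_A\in\ca{D}_n$. Then $S:=S_1S_2\in\ca{A}_\rho^\times$ is the required conjugator.

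I expect the main obstacle to be precisely the constraint that conjugation must not leave $\ca{A}_\rho$; the two devices that overcome it are the inclusion of the full block-diagonal algebra (so the first stage can be carried out honestly inside $\ca{A}_\rho^\times$) and, more delicately, the $\ca{F}$-invariance of the column supports $U_i$ forced by transitivity, which is what guarantees that the correcting eigenvectors are supported in $\rho$. The surjectivity-onto-quotient statement is the only genuine linear-algebra input and is routine to verify from the simultaneous diagonalizability of $\ca{F}$.
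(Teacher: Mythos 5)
A preliminary remark: this paper never actually proves Theorem \ref{thm:inner diagonalization on SMA} --- it is imported verbatim from \cite[Theorem 3.4]{GogicTomasevic} --- so there is no in-paper argument to measure you against; I can only judge your proof on its own terms, and it holds up. The relation $i\sim j\iff (i,j),(j,i)\in\rho$ is indeed an equivalence whose classes carry a partial order induced by $\rho$, every matrix of $\ca{A}_\rho$ preserves the filtration $W_{\le t}$ attached to a linear extension, and block-diagonal extraction $\pi$ is a unital algebra homomorphism on $\ca{A}_\rho$: if $i\sim j$ and $A_{ik}B_{kj}\ne 0$, then $(i,k),(k,j),(j,i)\in\rho$ force $k\sim i$ by transitivity. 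Hence $\pi(\ca{F})$ is a commuting family, diagonalizability passes to $\pi(A)$ by your minimal-polynomial argument (using that $\pi$ is unital, so $m_A(\pi(A))=\pi(m_A(A))=0$), and stage 1 is honestly realized by a block-diagonal $S_1\in\ca{A}_\rho^\times$. In stage 2, the $\ca{A}_\rho$-invariance of $U_i=\spn\{e_j:(j,i)\in\rho\}$ is again immediate from transitivity, and your lifting step is the standard fact that for a \emph{commuting family} of diagonalizable operators an invariant subspace splits along the joint eigenspace decomposition, so each joint eigenvector of a quotient is the image of a joint eigenvector; applied to $U_i$ modulo $U_i\cap W_{\le t-1}$ with character $A\mapsto A_{ii}$, this yields $s_i=e_i+v_i$, and the two memberships $v_i\in U_i$ and $v_i\in W_{\le t-1}$ deliver exactly the two things you need, namely $\supp M\subseteq\rho$ and nilpotency of $M$, whence $S_2=I+M\in\ca{A}_\rho^\times$ and $S_2^{-1}AS_2=D_A$. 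Two points deserve explicit mention in a polished write-up: state the lifting lemma for the family (joint eigenspaces), not for ``a diagonalizable operator,'' since the single-operator version is not what you use; and record that $\bar e_i$ is an eigenvector of the induced action on $U_i/(U_i\cap W_{\le t-1})$ precisely because $Ae_i-A_{ii}e_i=N_Ae_i$ lies in $U_i\cap W_{\le t-1}$, where membership in $U_i$ (and not merely in $W_{\le t-1}$) is exactly the invariance of $U_i$ at work. Your approach --- diagonalize inside the symmetric part, then correct across it by a unipotent element of $\ca{A}_\rho$ built from lifted joint eigenvectors --- is the natural two-stage scheme and, whether or not it coincides with the proof in \cite{GogicTomasevic}, it is a complete and self-contained argument.
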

	
	\smallskip 
	
	At the end of this preliminary section, we also introduce the following auxiliary notation, which will play a crucial role in the proof of Theorem \ref{thm:main result}. Let $A \in M_n$ and $S \subseteq [1,n]$. \begin{itemize}
		\item If $S \ne [1,n]$, denote by $A^{\flat S} \in M_{n - \abs{S}}$ the matrix obtained from $A$ by deleting all rows $i$ and columns $j$ where $i,j \in S$. We also formally allow $A^{\flat \emptyset} = A$.
		\item Denote by $A^{\sharp S} \in M_{n + \abs{S}}$ the matrix obtained from $A$ by adding zero rows and columns so that $(A^{\sharp S})^{\flat S} = A$.
	\end{itemize}

	Note that if $S = \{s_1,\ldots,s_k\}$, where $s_1 < \cdots < s_k$, we have
	\begin{equation}
		\label{eq:decomposition of flat and sharp}
		A^{\sharp S} = (\cdots (A^{\sharp \{s_1\}})^{\sharp \{s_{2}\}}\cdots)^{\sharp \{s_k\}}, \qquad A^{\flat S} = (\cdots (A^{\flat \{s_k\}})^{\flat \{s_{k-1}\}}\cdots)^{\flat \{s_1\}}.
	\end{equation}
	
	The next simple lemmas outline some key properties of the maps $(\cdot)^{\flat S}$ and $(\cdot)^{\sharp S}$.
	
	\begin{lemma}\label{le:sharp is homomorphism}
		For each $S \subseteq [1,n]$, the map $(\cdot)^{\sharp S} : M_n \to M_{n + \abs{S}}$ is an algebra monomorphism.
	\end{lemma}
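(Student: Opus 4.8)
The plan is to verify the three defining features of an algebra monomorphism---additivity together with homogeneity, multiplicativity, and injectivity---of which only multiplicativity has any content. The key is an explicit description of the map. Viewing $S$ as a subset of $[1,n+\abs{S}]$, let $\sigma:[1,n]\to[1,n+\abs{S}]\setminus S$ be the unique order-preserving bijection onto the complement of $S$. Then $A^{\sharp S}$ is characterized on entries by $(A^{\sharp S})_{\sigma(i)\sigma(j)}=A_{ij}$ for all $i,j\in[1,n]$, with every entry lying in a row or column indexed by $S$ equal to zero; equivalently, on the standard basis it is given by $E_{ij}^{\sharp S}=E_{\sigma(i)\sigma(j)}$, the matrix unit on the left lying in $M_n$ and the one on the right in $M_{n+\abs{S}}$. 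From this formula additivity and homogeneity are immediate, and injectivity is clear because distinct matrix units are mapped to distinct matrix units (equivalently, $A=(A^{\sharp S})^{\flat S}$ recovers $A$).

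For multiplicativity I would compute on matrix units and extend by bilinearity. Starting from $E_{ij}E_{kl}=\delta_{jk}E_{il}$ in $M_n$, applying $(\cdot)^{\sharp S}$ gives $(E_{ij}E_{kl})^{\sharp S}=\delta_{jk}E_{\sigma(i)\sigma(l)}$, while on the other hand $E_{ij}^{\sharp S}E_{kl}^{\sharp S}=E_{\sigma(i)\sigma(j)}E_{\sigma(k)\sigma(l)}=\delta_{\sigma(j)\sigma(k)}E_{\sigma(i)\sigma(l)}$. These coincide because injectivity of $\sigma$ yields $\delta_{\sigma(j)\sigma(k)}=\delta_{jk}$. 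Since the matrix product is bilinear and $(\cdot)^{\sharp S}$ is linear, this identity on the basis promotes at once to $(AB)^{\sharp S}=A^{\sharp S}B^{\sharp S}$ for all $A,B\in M_n$, which completes the verification.

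There is no substantial obstacle here; the argument is routine, and the only point that demands a little care is the index bookkeeping, namely confirming that $S\subseteq[1,n]$, reinterpreted inside $[1,n+\abs{S}]$, labels exactly the inserted zero rows and columns, so that the defining relation $(A^{\sharp S})^{\flat S}=A$ holds and is consistent with the iterated description in \eqref{eq:decomposition of flat and sharp}. An alternative organization would be to invoke that factorization directly: it writes $(\cdot)^{\sharp S}$ as a composite of single-index insertions between matrix algebras of successively larger size, and since a composite of algebra monomorphisms is again an algebra monomorphism, it would then suffice to run the matrix-unit computation above only in the case $\abs{S}=1$. Finally, one should note that $(\cdot)^{\sharp S}$ is not unital for $S\ne\emptyset$, as $I_n^{\sharp S}$ has vanishing diagonal entries at the positions of $S$; thus monomorphism is meant in the non-unital sense, which is precisely what the computation establishes.
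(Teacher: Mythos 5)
Your proof is correct, but it takes a genuinely different route from the paper's. The paper first invokes the factorization \eqref{eq:decomposition of flat and sharp} to reduce to a single inserted index $S=\{k\}$, and then verifies multiplicativity by writing $X,Y$ in $2\times 2$ block form split at $k$ and explicitly multiplying the resulting $3\times 3$ block matrices (with the boundary cases $k\in\{1,n\}$ noted as notationally simpler variants). You instead treat an arbitrary $S$ in one stroke: you encode the insertion by the order-preserving bijection $\sigma:[1,n]\to[1,n+\abs{S}]\setminus S$, observe that the map acts on matrix units by $E_{ij}\mapsto E_{\sigma(i)\sigma(j)}$, verify multiplicativity on matrix units via $\delta_{\sigma(j)\sigma(k)}=\delta_{jk}$ (injectivity of $\sigma$), and extend by bilinearity of the product and linearity of the map. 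What your route buys: it eliminates both the reduction step and the block-index bookkeeping, including the case analysis at boundary positions, and the single identity $E_{ij}^{\sharp S}=E_{\sigma(i)\sigma(j)}$ makes linearity and injectivity transparent at the same time. What the paper's route buys: it needs no entrywise characterization of $(\cdot)^{\sharp S}$ beyond the visibly obvious picture of inserting one zero row and column, whereas you must justify that $\sigma$ labels the non-inserted positions correctly --- precisely the ``index bookkeeping'' point you flag, which is the only place your argument leans on an unpacking of the definition. Your closing observation that the embedding is not unital for $S\ne\emptyset$, so that ``monomorphism'' must be read in the non-unital sense, is a worthwhile clarification that the paper leaves implicit.
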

	\begin{proof}
		Obviously, $(\cdot)^{\sharp S}$ is a linear map. In view of \eqref{eq:decomposition of flat and sharp}, it suffices to assume that $S = \{k\}$ for some $k \in [1,n]$. Moreover, we can further assume that $k \notin \{1,n\}$ as the proof for the cases $k \in \{1,n\}$ is conceptually similar, only even simpler in terms of notation. By using block-matrix notation, if we denote $X,Y \in M_n$ as
		$$X = \begin{bmatrix}
			X^{I,I}_{(k-1)\times (k-1)} & X^{I,II}_{(k-1)\times(n-k+1)}\\
			X^{II,I}_{(n-k+1)\times (k-1)} & X^{II,II}_{(n-k+1)\times(n-k+1)}
		\end{bmatrix}, \qquad Y = \begin{bmatrix}
			Y^{I,I}_{(k-1)\times (k-1)} & Y^{I,II}_{(k-1)\times(n-k+1)}\\
			Y^{II,I}_{(n-k+1)\times (k-1)} & Y^{II,II}_{(n-k+1)\times(n-k+1)}
		\end{bmatrix}$$
		we then have
		$$X^{\sharp \{k\}} = \begin{bmatrix}
			X^{I,I}_{(k-1)\times (k-1)} & 0_{(k-1)\times 1} & X^{I,II}_{(k-1)\times(n-k+1)}\\
			0_{1 \times (k-1)} & 0_{1\times 1} & 0_{1 \times (n-k+1)} \\
			X^{II,I}_{(n-k+1)\times (k-1)} & 0_{(n-k+1) \times 1} & X^{II,II}_{(n-k+1)\times(n-k+1)}
		\end{bmatrix}$$
		and $$Y^{\sharp \{k\}} = \begin{bmatrix}
			Y^{I,I}_{(k-1)\times (k-1)} & 0_{(k-1)\times 1} & Y^{I,II}_{(k-1)\times(n-k+1)}\\
			0_{1 \times (k-1)} & 0_{1\times 1} & 0_{1 \times (n-k+1)} \\
			Y^{II,I}_{(n-k+1)\times (k-1)} & 0_{(n-k+1) \times 1} & Y^{II,II}_{(n-k+1)\times(n-k+1)}
		\end{bmatrix},$$
		where both matrices are in $M_{n+1}$. By using block-matrix multiplication (see e.g.\ \cite[Section 2]{Stewart}), it follows that $X^{\sharp \{k\}} Y^{\sharp \{k\}}$ equals
		\begin{small}
			\begin{align*}
				&\begin{bmatrix}
					(X^{I,I}Y^{I,I}+X^{I,II}Y^{II,I})_{(k-1)\times (k-1)} & 0_{(k-1)\times 1} & (X^{I,I}Y^{I,II}+X^{I,II}Y^{II,II})_{(k-1)\times(n-k+1)}\\
					0_{1 \times (k-1)} & 0_{1\times 1} & 0_{1 \times (n-k+1)} \\
					(X^{II,I}Y^{I,I}+X^{II,II}Y^{II,I})_{(n-k+1)\times (k-1)} & 0_{(n-k+1) \times 1} & (X^{II,I}Y^{I,II} + X^{II,II}Y^{II,II})_{(n-k+1)\times(n-k+1)}\\
				\end{bmatrix}\\
				&\quad= \begin{bmatrix}
					(X^{I,I}Y^{I,I}+X^{I,II}Y^{II,I})_{(k-1)\times (k-1)}  & (X^{I,I}Y^{I,II}+X^{I,II}Y^{II,II})_{(k-1)\times(n-k+1)}\\
					(X^{II,I}Y^{I,I}+X^{II,II}Y^{II,I})_{(n-k+1)\times (k-1)} & (X^{II,I}Y^{I,II} + X^{II,II}Y^{II,II})_{(n-k+1)\times(n-k+1)}
				\end{bmatrix}^{\sharp \{k\}},
			\end{align*}
		\end{small}
		while the latter matrix is precisely $(XY)^{\sharp \{k\}}$. Finally, the injectivity of $(\cdot)^{\sharp \{k\}}$ is clear.
	\end{proof}
	
	\begin{lemma}\label{le:flat is an isomorphism}
		For a nonempty $S\subseteq [1,n]$, the set \begin{equation}\label{eq:M_n subset}
			M_n^{\subseteq S} := \{X \in M_n : \supp X \subseteq S\times S\}
		\end{equation}
		is a subalgebra of $M_n$ and 
		$$(\cdot)^{\flat S^c} : M_n^{\subseteq S} \to M_{\abs{S}}$$
		is an algebra isomorphism.
	\end{lemma}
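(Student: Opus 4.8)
The plan is to realize $(\cdot)^{\flat S^c}$ as the inverse of the embedding $(\cdot)^{\sharp S^c}$ and to read off everything from Lemma \ref{le:sharp is homomorphism}, rather than computing products entrywise. Writing $S^c = [1,n] \setminus S$, so that $\abs{S} + \abs{S^c} = n$, Lemma \ref{le:sharp is homomorphism} (applied with ground dimension $\abs{S}$ and insertion set $S^c$) shows that
$$(\cdot)^{\sharp S^c} : M_{\abs{S}} \longrightarrow M_n$$
is an injective algebra homomorphism. I would then identify its image. For $B \in M_{\abs{S}}$, the matrix $B^{\sharp S^c}$ is obtained by inserting zero rows and columns at the positions indexed by $S^c$, so all of its nonzero entries lie in rows and columns indexed by $S$, i.e.\ $\supp(B^{\sharp S^c}) \subseteq S \times S$; this gives the inclusion of the image into $M_n^{\subseteq S}$. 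Conversely, if $X \in M_n^{\subseteq S}$, then the rows and columns of $X$ indexed by $S^c$ vanish, so deleting them and reinserting zeros recovers $X$, that is $(X^{\flat S^c})^{\sharp S^c} = X$; hence $X$ lies in the image. Therefore the image of $(\cdot)^{\sharp S^c}$ is exactly $M_n^{\subseteq S}$, and being the image of an algebra homomorphism, it is automatically a subalgebra of $M_n$ (this disposes of the first assertion, bypassing the elementary support computation $\supp(XY) \subseteq S \times S$).

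It then remains to check that $(\cdot)^{\flat S^c}$ is the two-sided inverse of the isomorphism $(\cdot)^{\sharp S^c} : M_{\abs{S}} \to M_n^{\subseteq S}$. On $M_{\abs{S}}$ the identity $(B^{\sharp S^c})^{\flat S^c} = B$ is precisely the defining property of $(\cdot)^{\sharp S^c}$, while on $M_n^{\subseteq S}$ the identity $(X^{\flat S^c})^{\sharp S^c} = X$ was just established. Since the inverse of an algebra isomorphism is again an algebra isomorphism, it follows at once that $(\cdot)^{\flat S^c} : M_n^{\subseteq S} \to M_{\abs{S}}$ is an algebra isomorphism, as claimed.

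The only delicate point is the invocation of Lemma \ref{le:sharp is homomorphism}: there the insertion set sits inside $[1,n]$ for a map out of $M_n$, whereas here we embed $M_{\abs{S}}$ into $M_n$ and insert at the positions of $S^c \subseteq [1,n]$. I expect this index bookkeeping to be the main (and essentially only) obstacle. It is harmless, however: the reduction \eqref{eq:decomposition of flat and sharp} to single insertions and the block-multiplication argument in the proof of Lemma \ref{le:sharp is homomorphism} are insensitive to the position of the inserted index relative to the ground dimension. Alternatively, one can conjugate by the permutation matrix moving $S$ onto the initial segment $[1,\abs{S}]$, which turns every $X \in M_n^{\subseteq S}$ into a block matrix $\diag(X^{\flat S^c}, 0)$; in that picture both multiplicativity and bijectivity of passing to the top-left block are transparent, and the permutation conjugation being an automorphism transports the conclusion back.
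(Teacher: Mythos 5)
Your proof is correct; it uses the same key input as the paper (Lemma \ref{le:sharp is homomorphism}) but packages the argument differently, and the difference is worth spelling out. The paper argues directly about $(\cdot)^{\flat S^c}$: it first proves closure of $M_n^{\subseteq S}$ under products by an entrywise support computation, then derives multiplicativity of $(\cdot)^{\flat S^c}$ from the chain $((XY)^{\flat S^c})^{\sharp S^c} = XY = (X^{\flat S^c})^{\sharp S^c}(Y^{\flat S^c})^{\sharp S^c} = (X^{\flat S^c} Y^{\flat S^c})^{\sharp S^c}$ together with injectivity of $(\cdot)^{\sharp S^c}$, and finally obtains bijectivity from the dimension count $\dim M_n^{\subseteq S} = \abs{S}^2 = \dim M_{\abs{S}}$. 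You instead corestrict the monomorphism $(\cdot)^{\sharp S^c} : M_{\abs{S}} \to M_n$ to its image, identify that image as exactly $M_n^{\subseteq S}$, and exhibit $(\cdot)^{\flat S^c}$ as its two-sided inverse. This makes the subalgebra assertion automatic (images of algebra homomorphisms are subalgebras) and eliminates the dimension count; note that the paper cannot drop its support computation, since the identity $((XY)^{\flat S^c})^{\sharp S^c} = XY$ it relies on is valid only once one knows $XY \in M_n^{\subseteq S}$, whereas in your setup closure under products is a consequence rather than a prerequisite. Finally, the index bookkeeping you flag is indeed harmless and is not specific to your argument: the paper applies Lemma \ref{le:sharp is homomorphism} in precisely the same transposed form (source $M_{\abs{S}}$, insertion positions $S^c$ read as indices of the target $M_n$), and, as you say, its proof via \eqref{eq:decomposition of flat and sharp} and block multiplication goes through verbatim in that reading.
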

	\begin{proof}
		Notice first that $(X^{\flat S^c})^{\sharp S^c} = X$ for each $X \in M_n^{\subseteq S}$, hence $(\cdot)^{\flat S^c}$ is injective. It is obvious that $M_n^{\subseteq S}$ is a subspace and that $(\cdot)^{\flat S}$ is a linear map. For $X,Y \in M_n^{\subseteq S}$ and $(i,j) \in [1,n]^2 \setminus (S \times S)$, for each $k \in [1,n]$ we have $(i,k) \notin S \times S$ or $(k,j) \notin S \times S$ and hence $(XY)_{ij} = \sum_{k=1}^n X_{ik}Y_{kj} = 0$. We conclude that $\supp (XY) \subseteq S \times S$, so $XY \in M_n^{\subseteq S}$. Moreover, we have
		$$((XY)^{\flat S^c})^{\sharp S^c} = XY = (X^{\flat S^c})^{\sharp S^c} (Y^{\flat S^c})^{\sharp S^c} \stackrel{\text{Lemma } \ref{le:sharp is homomorphism}}= (X^{\flat S^c}Y^{\flat S^c})^{\sharp S^c}.$$
		The injectivity of $(\cdot)^{\sharp S^c}$ from Lemma \ref{le:sharp is homomorphism} yields
		$$(XY)^{\flat S^c} = X^{\flat S^c}Y^{\flat S^c},$$
		which shows that $(\cdot)^{\flat S^c}$ is also a multiplicative map and thus an algebra homomorphism. As $\dim M_n^{\subseteq S} = \dim M_{\abs{S}} = \abs{S}^2$, the map is an isomorphism.
	\end{proof}
	We also extend the notation $(\cdot)^{\flat S}$ and $(\cdot)^{\sharp S}$ notation to sets of matrices by applying the respective operation elementwise.
	
	\begin{lemma}\label{le:deletion of SMA is SMA}
		Let $\ca{A}_{\rho} \subseteq M_n$ be an SMA. For each nonempty $S \subseteq [1,n]$, $\ca{A}_{\rho}^{\flat S}$ is again an SMA in $M_{n-\abs{S}}$.
	\end{lemma}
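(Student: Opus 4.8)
The plan is to exhibit, directly from $\rho$, a quasi-order $\tilde\rho$ on $[1, n - \abs{S}]$ with $\ca{A}_\rho^{\flat S} = \ca{A}_{\tilde\rho}$; the whole content is the observation that deleting the rows and columns indexed by $S$ corresponds to restricting $\rho$ to the complement $S^c := [1,n]\setminus S$ and relabelling the surviving indices in increasing order. First I would set up this relabelling: writing $S^c = \{t_1 < \cdots < t_m\}$ with $m = n - \abs{S}$, let $\sigma : [1,m] \to S^c$ be the order-preserving bijection $\sigma(p) = t_p$. Because $(\cdot)^{\flat S}$ merely erases the rows and columns in $S$, it acts on matrix units by $E_{ij}^{\flat S} = E_{\sigma^{-1}(i)\,\sigma^{-1}(j)}$ when $i,j \in S^c$, and $E_{ij}^{\flat S} = 0$ as soon as $i \in S$ or $j \in S$.

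Next I would define the candidate relation as the pullback of $\rho$ along $\sigma$,
$$\tilde\rho := \{(p,q) \in [1,m]^2 : (\sigma(p), \sigma(q)) \in \rho\},$$
and verify that it is a quasi-order. Reflexivity and transitivity transfer immediately from $\rho$ through the bijection $\sigma$: one has $(\sigma(p),\sigma(p)) \in \rho$ for every $p$, and $(\sigma(p),\sigma(q)), (\sigma(q),\sigma(r)) \in \rho$ forces $(\sigma(p),\sigma(r)) \in \rho$.

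Finally I would assemble the identification. Since $(\cdot)^{\flat S}$ is linear, the image of $\ca{A}_\rho = \spn\{E_{ij} : (i,j)\in\rho\}$ is the span of the images of its spanning matrix units; by the formula above these images are precisely the $E_{\sigma^{-1}(i)\,\sigma^{-1}(j)}$ with $(i,j) \in \rho \cap (S^c \times S^c)$, which by the definition of $\tilde\rho$ run over $\{E_{pq} : (p,q) \in \tilde\rho\}$. Hence $\ca{A}_\rho^{\flat S} = \spn\{E_{pq} : (p,q)\in\tilde\rho\} = \ca{A}_{\tilde\rho}$. I do not expect a genuine obstacle here; the only point deserving care is that $\sigma$ be order-preserving, so that the positions of the surviving entries are tracked correctly, together with the routine confirmation that the restricted relation remains reflexive and transitive rather than merely a relation. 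Alternatively, one could invoke the decomposition \eqref{eq:decomposition of flat and sharp} to reduce to the single-index case $S = \{k\}$ and argue by induction, but the direct construction above seems more transparent.
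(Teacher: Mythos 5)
Your proof is correct and takes essentially the same approach as the paper: both construct a relabelling bijection between the surviving indices and $[1, n-\abs{S}]$, transport $\rho$ along it, and check that reflexivity and transitivity survive. The only difference is cosmetic --- the paper uses \eqref{eq:decomposition of flat and sharp} to reduce to the singleton case $S=\{k\}$ with its bijection $\kappa$, whereas you handle general $S$ in one step via the order-preserving enumeration of $S^c$, which you yourself note as the alternative.
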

	\begin{proof}
		In view of \eqref{eq:decomposition of flat and sharp}, it suffices to assume that $S = \{k\}$ for some $k \in [1,n]$. Consider the bijection
		$$\kappa : [1,n]\setminus \{k\} \to [1,n-1], \qquad \kappa(j) := \begin{cases}
			j, &\text{ if }j < k,\\
			j-1, &\text{ if }j > k
		\end{cases}$$
		and the relation
		$$\rho' := \{(\kappa(i),\kappa(j)) : (i,j) \in \rho \text{ and } i,j \ne k\} \subseteq [1,n-1]^2.$$
		We claim that $\rho'$ is a quasi-order on $[1,n-1]$. Let $j \in [1,n-1]$ be arbitrary. We have
		$$(j,j) = (\kappa(\kappa^{-1}(j)), \kappa(\kappa^{-1}(j))).$$
		Since $(\kappa^{-1}(j), \kappa^{-1}(j)) \in \rho$ and $\kappa^{-1}(j) \ne k$, it follows that $(j,j) \in \rho'$ and thus $\rho'$ is reflexive. To show the transitivity of $\rho'$, consider $i,j,l \in [1,n] \setminus \{k\}$ such that $(\kappa(i),\kappa(j)), (\kappa(j),\kappa(l)) \in \rho'$. This means that $(i,j),(j,l)\in\rho$ which, by the transitivity of $\rho$, implies that $(i,l) \in \rho$ and hence $(\kappa(i),\kappa(l)) \in \rho'$. Finally, from the definition of $(\cdot)^{\flat \{k\}}$ and $\rho'$ it is clear that $\ca{A}_{\rho}^{\flat \{k\}} = \ca{A}_{\rho'}$, and the latter is an SMA.
	\end{proof}
	
	\section{Main result}\label{sec:mainresult}
	As already announced, in this section we prove our main result, Theorem \ref{thm:main result}, which is an extension of Petek-\v Semrl theorems to SMAs. Since the rank-one matrices play an essential role in the proof of  Theorem \ref{thm:main result},  our first task is to determine the (norm-)closure of rank-one diagonalizable matrices (i.e.\ non-nilpotents) in SMAs. Throughout this section, given an SMA $\ca{A}_\rho \subseteq M_n$,  we shall use the following notation
	$$
	\ca{R} := \{A \in \ca{A}_\rho : A \text{ is a rank-one non-nilpotent}\}
	$$
	and by $\overline{\ca{R}}$ we denote its closure.
	\begin{lemma}\label{le:closure of R}
		Let $\ca{A}_\rho \subseteq M_n$ be an SMA. Then  $\overline{\ca{R}}$  is given by
		$$\overline{\ca{R}} = \{ab^* : a,b \in \C^n, \exists k \in [1,n] \text{ such that } ae_k^*, e_kb^* \in \ca{A}_\rho\} \subseteq \ca{A}_\rho.$$
		In particular, $\overline{\ca{R}}$ contains all matrices in $\ca{A}_\rho$ supported in a single row or a single column. 
	\end{lemma}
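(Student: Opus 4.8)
The plan is to prove the equality by establishing two inclusions, after first recording the easy containment in $\ca{A}_\rho$. Writing a rank-$\le 1$ matrix as $ab^*$ with $(ab^*)_{ij} = a_i\overline{b_j}$, note that $\supp(ab^*) = \supp(a)\times\supp(b)$, that $a e_k^* \in \ca{A}_\rho \iff \supp(a)\subseteq \rho^{-1}(k)$, and that $e_k b^* \in \ca{A}_\rho \iff \supp(b)\subseteq\rho(k)$. Denote the right-hand set by $\ca{W}$. If $ab^*\in\ca{W}$ via some $k$, then for $i\in\supp(a)$, $j\in\supp(b)$ we have $(i,k),(k,j)\in\rho$, so transitivity gives $(i,j)\in\rho$; hence $\supp(ab^*)\subseteq\rho$ and $ab^*\in\ca{A}_\rho$. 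This disposes of the final containment $\ca{W}\subseteq\ca{A}_\rho$.

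For $\overline{\ca{R}}\subseteq\ca{W}$ I would show $\ca{R}\subseteq\ca{W}$ and that $\ca{W}$ is closed. If $A=ab^*\in\ca{R}$, then $0\ne\Tr(A)=b^*a$ forces some index $k\in\supp(a)\cap\supp(b)$; since $\supp(a)\times\supp(b)\subseteq\rho$ and $k$ lies in both supports, the matrices $ae_k^*$ and $e_kb^*$ are supported inside $\rho$, so $A\in\ca{W}$. For closedness, decompose $\ca{W}=\bigcup_{k=1}^n \ca{S}_k$, where $\ca{S}_k$ is the set of matrices of rank at most one supported in $\rho^{-1}(k)\times\rho(k)$. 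Each $\ca{S}_k$ is the intersection of the (closed) determinantal variety of rank-$\le 1$ matrices with a coordinate subspace, hence closed, and a finite union of closed sets is closed. Thus $\overline{\ca{R}}\subseteq\overline{\ca{W}}=\ca{W}$.

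The substantive direction is $\ca{W}\subseteq\overline{\ca{R}}$, an approximation argument. Fix $M=ab^*\in\ca{S}_k$, so $\supp(a)\subseteq\rho^{-1}(k)$ and $\supp(b)\subseteq\rho(k)$, and perturb both factors along the pivot direction $e_k$, setting $a_\varepsilon:=a+\varepsilon e_k$ and $b_\varepsilon:=b+\varepsilon e_k$. The crucial point is that reflexivity of $\rho$ gives $k\in\rho^{-1}(k)\cap\rho(k)$, so $\supp(a_\varepsilon)\subseteq\rho^{-1}(k)$ and $\supp(b_\varepsilon)\subseteq\rho(k)$; together with $\rho^{-1}(k)\times\rho(k)\subseteq\rho$ (transitivity, as above) this keeps $a_\varepsilon b_\varepsilon^*\in\ca{A}_\rho$. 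Now $a_\varepsilon b_\varepsilon^*$ has rank at most one, and its trace equals $b^*a+\varepsilon(a_k+\overline{b_k})+\varepsilon^2$, a monic — in particular nonzero — polynomial in $\varepsilon$, so it is non-vanishing for all sufficiently small $\varepsilon\ne 0$. For such $\varepsilon$ the matrix is nonzero (hence genuinely rank one) and has a nonzero eigenvalue (hence non-nilpotent), so it lies in $\ca{R}$; letting $\varepsilon\to 0$ yields $a_\varepsilon b_\varepsilon^*\to M$, whence $M\in\overline{\ca{R}}$.

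The main obstacle is arranging a single perturbation that simultaneously preserves membership in $\ca{A}_\rho$ and makes the trace nonzero. The idea resolving both is to perturb along $e_k$ for the same pivot index $k$: reflexivity guarantees that $e_k$ stays within the allowed row- and column-index sets (so the support cannot escape $\rho$), while the resulting $\varepsilon^2$ term on the diagonal entry $(k,k)$ guarantees the trace is not identically zero. Finally, the \emph{in particular} clause is immediate from the characterization: a matrix in $\ca{A}_\rho$ supported in a single row $r$ has the form $e_r b^*$ with $\supp(b)\subseteq\rho(r)$, so it lies in $\ca{S}_r$ upon taking $k=r$; symmetrically, one supported in a single column $c$ has the form $a e_c^*$ and lies in $\ca{S}_c$ with $k=c$. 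In either case the matrix belongs to $\ca{W}=\overline{\ca{R}}$.
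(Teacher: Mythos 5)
Your proof is correct, and one half of it follows a genuinely different route than the paper. The substantive inclusion (right-hand set $\subseteq \overline{\ca{R}}$) coincides with the paper's argument: both perturb $ab^*$ to $(a+\varepsilon e_k)(b+\varepsilon e_k)^*$, check that membership in $\ca{A}_\rho$ survives, and note the trace becomes nonzero; your only variation is treating $b^*a\ne 0$ and $b^*a=0$ uniformly via the monic-quadratic observation, where the paper dispatches $b^*a\ne 0$ as a separate trivial case. The real difference is in $\overline{\ca{R}}\subseteq\ca{W}$. The paper argues metrically on a limit point: writing $A=ab^*\in\overline{\ca{R}}$, setting $\mu:=\min_{(i,j)\in\supp A}\abs{A_{ij}}$, choosing $uv^*\in\ca{R}$ with $\norm{uv^*-A}_\infty<\mu$ to force $\supp a\subseteq\supp u$ and $\supp b\subseteq\supp v$, and extracting the pivot $k$ from $v^*u\ne 0$. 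You instead prove the easy containment $\ca{R}\subseteq\ca{W}$ (a nonzero trace directly supplies a pivot $k\in\supp a\cap\supp b$) and then show $\ca{W}$ itself is closed via the decomposition $\ca{W}=\bigcup_{k=1}^n\ca{S}_k$, each $\ca{S}_k$ being the intersection of the closed set of rank-$\le 1$ matrices with a coordinate subspace. Your version avoids norm estimates entirely and isolates a reusable structural fact (the limit set is a finite union of closed determinantal pieces), at the modest cost of verifying the set equality $\ca{W}=\bigcup_k\ca{S}_k$; the paper's estimate is more hands-on but needs no auxiliary decomposition. One small point to tighten: for $M=0$ the inference ``$M=ab^*\in\ca{S}_k$, hence $\supp a\subseteq\rho^{-1}(k)$ and $\supp b\subseteq\rho(k)$'' requires choosing the factorization $a=b=0$ (an arbitrary factorization of $0$ has one factor zero but the other unconstrained); with that choice your perturbation yields $\varepsilon^2E_{kk}\in\ca{R}$, so $0\in\overline{\ca{R}}$ and the argument is complete.
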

	\begin{proof}
		Note that
		$$\ca{R} = \{uv^* \in \ca{A}_\rho : u,v \in \C^n, v^*u \ne 0\}.$$
		\fbox{$\subseteq$} By the lower semicontinuity of the rank, clearly any nonzero element $A \in \overline{\ca{R}}$ has rank one and hence is of the form $A = ab^*$ for some nonzero vectors $a,b \in \C^n$. Since $A \in \ca{A}_\rho$, we have
		$$(\supp a) \times (\supp b) = \supp A \subseteq \rho.$$
		For the sake of concreteness, we assume that $M_n$ is equipped with the norm 
		$$
		\|X\|_\infty:=\max_{1\leq i,j \leq n} |X_{ij}|.
		$$
		Denote 
		$$
		\mu := \min_{(i,j) \in \supp A} \abs{A_{ij}} > 
		0.
		$$
		By the assumption, there exists a matrix $uv^* \in \ca{R}$ (where $u,v \in \C^n, v^*u \ne 0$) such that $\norm{uv^* - A}_\infty < \mu.$ In particular, for each $(i,j) \in \supp A$ we have
		$$\abs{A_{ij}}-\abs{(uv^*)_{ij}} \le \abs{A_{ij}-(uv^*)_{ij}} < \mu \implies \abs{(uv^*)_{ij}} > \abs{A_{ij}} - \mu \ge 0$$
		so $u_i\overline{v}_j = (uv^{*})_{ij} \ne 0$. It follows
		$$(\supp a) \times (\supp b) = \supp A \subseteq \supp(uv^*) = (\supp u) \times (\supp v),$$
		which implies $\supp a\subseteq \supp u$ and $\supp b \subseteq \supp v.$
		Since $v^*u \ne 0$, we can choose some $1 \le k \le n$ such that $u_k\overline{v}_k \ne 0$. Then $k \in (\supp u) \cap (\supp v)$ and therefore
		\begin{align*}
			((\supp a) \times \{k\}) \cup (\{k\} \times (\supp b)) & \subseteq ((\supp u) \times \{k\}) \cup (\{k\} \times (\supp v)) \\
			& \subseteq (\supp u) \times (\supp v) \subseteq \rho.
		\end{align*}
		In particular, we have
		$$\supp(ae_k^*) = (\supp a) \times \{k\} \subseteq \rho \implies ae_k^* \in \ca{A}_\rho$$
		and
		$$\supp(e_kb^*) = \{k\} \times (\supp b) \subseteq \rho \implies e_kb^* \in \ca{A}_\rho.$$
		\fbox{$\supseteq$} Clearly $0 \in \overline{\ca{R}}$. Suppose now that $ab^* \in \ca{A}_\rho$ for some nonzero vectors $a,b \in \C^n$ such that $ae_k^*, e_kb^* \in \ca{A}_\rho$ for some $1 \le k \le n$. We claim that $ab^* \in \overline{\ca{R}}$. If $b^*a \ne 0$, then clearly $ab^* \in \ca{R} \subseteq \overline{\ca{R}}$, so assume $b^*a = 0$.\smallskip
		
		\noindent For $\varepsilon > 0$ consider
		$$A_\varepsilon := (a+\varepsilon e_k)(b+\varepsilon e_k)^* \in M_n.$$
		We have
		$$A_\varepsilon = ab^* + \varepsilon (ae_k^* + e_kb^*) + \varepsilon^2 E_{kk} \in \ca{A}_\rho$$
		and clearly $\lim_{\varepsilon \to 0} A_\varepsilon = ab^*$. Furthermore,
		$$(b+\varepsilon e_k)^*(a+\varepsilon e_k) = b^*a + \varepsilon (e_k^* a + b^*e_k) + \varepsilon^2 e_ke_k^* = \varepsilon (a_k + \overline{b_k}) + \varepsilon^2,$$
		which is nonzero when $\varepsilon \ne -(a_k + \overline{b_k})$, implying that $A_\varepsilon \in \ca{R}$ for such $\varepsilon$. This completes the proof of the inclusion.
		
		\smallskip
		
		Finally, suppose that a matrix $A \in \ca{A}_{\rho}$ is supported in a single row $j \in [1,n]$. Then there exists a vector $b \in \C^n$ such that $A = e_j b^*$. We have $e_j b^*, e_je_j^* = E_{jj} \in \ca{A}_{\rho}$ so $A \in \overline{\ca{R}}$. The case when a matrix is supported in a single column is treated by a similar argument.
	\end{proof}
	
	The next example shows that for a general SMA $\ca{A}_\rho \subseteq M_n$, the set $\ca{R}$ does not need to be dense in the set of all rank-one matrices in $\ca{A}_\rho$.
	\begin{example}\label{ex:Rnotclosed}
		Consider the quasi-order
		$$
		\rho:= \Delta_4 \cup \{(1,3),(1,4),(2,3),(2,4)\}$$ 
		on $[1,4]$ and the corresponding SMA 
		$$
		A_\rho=\begin{bmatrix}
			* & 0 & * & * \\
			0 & * & * & * \\
			0 & 0 & * & 0 \\
			0 & 0 & 0 & *
		\end{bmatrix} \subseteq \ca{T}_4.
		$$
		Then
		$$
		A:= \begin{bmatrix}
			0 & 0 & 1 & 1 \\
			0 & 0 & 1 & 1 \\
			0 & 0 & 0 & 0 \\
			0 & 0 & 0 & 0
		\end{bmatrix}\in \ca{A}_\rho
		$$
		is a rank-one matrix such that $A \notin\overline{\ca{R}}$, as for any $0<\varepsilon<1$, the $\|\cdot\|_{\infty}$-ball $B(A,\varepsilon)$ in $\ca{A}_\rho$ does not intersect $\ca{R}$. Indeed, if $X \in B(A,\varepsilon)$ is a diagonalizable matrix in $\ca{A}_\rho$, then there exists some $j \in [1,4]$ such that $([1,2]\times [3,4]) \cup \{(j,j)\} \subseteq \supp X$. Hence, $X$ cannot be of rank-one. 
		
		
		\smallskip
		
		Alternatively, suppose that $A = ab^*$ for some nonzero vectors $a,b \in \C^n$. Then one easily sees that $a \parallel (e_1+e_2)$ and $b \parallel (e_3+e_4)$. On the other hand, we have
		$$(e_1+e_2)e_1^*, (e_1+e_2)e_2^*, e_3(e_3+e_4)^*, e_4(e_3+e_4)^* \notin \ca{A}_\rho$$
		and these conclusions are invariant under scalar multiplication by a nonzero scalar. It follows that $A$ does not satisfy the condition of Lemma \ref{le:closure of R} and thus $A \notin\overline{\ca{R}}$.   
	\end{example}
	\begin{remark}
		In fact, given a quasi-order $\rho$ on $[1,n]$, one easily sees that $\ca{R}$ is dense in the set of all rank-one matrices in $\ca{A}_\rho$ if and only if for all subsets $S,T \subseteq [1,n]$ we have
		$$S \times T \subseteq \rho \implies \exists k \in [1,n] \text{ such that } (S\times\{k\})\cup (\{k\}\times T) \subseteq \rho.$$
		It is not difficult to check that this condition is fulfilled for all block upper-triangular subalgebras of $M_n$. This is reflected in the fact that the maps resulting from Theorem \ref{thm:parabolic} are automatically rank-one (and, a posteriori, rank) preservers (see Claim \ref{cl:preserves rank-one} further in the text and compare it to Step 5.2 in the proof of  \cite[Theorem~1.4]{GogicPetekTomasevic}).
	\end{remark}
	\begin{lemma}\label{le:diagonals are dense}
		Let $\ca{A}_\rho \subseteq M_n$ be an SMA. Then the set
		$$\{S\diag(\lambda_1,\ldots,\lambda_n)S^{-1} : S\in \ca{A}_\rho^\times, \lambda_1,\ldots,\lambda_n\in \C \text{ pairwise distinct}\}$$
		is dense in $\ca{A}_\rho$.
	\end{lemma}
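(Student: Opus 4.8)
The plan is to reduce the statement to the density in $\ca{A}_\rho$ of matrices having $n$ pairwise distinct eigenvalues, and then to settle that density by a standard genericity (discriminant) argument, with $\Lambda_n$ serving as an explicit witness. First I would observe that the set in question coincides with
$$\ca{G} := \{X \in \ca{A}_\rho : X \text{ has } n \text{ pairwise distinct eigenvalues}\}.$$
Indeed, any $X$ of the form $S\diag(\lambda_1,\ldots,\lambda_n)S^{-1}$ with $S \in \ca{A}_\rho^\times$ and the $\lambda_i$ distinct lies in $\ca{A}_\rho$ (since $S, S^{-1} \in \ca{A}_\rho$ and $\ca{D}_n \subseteq \ca{A}_\rho$) and clearly has $n$ distinct eigenvalues. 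Conversely, if $X \in \ca{G}$, then $X$ is diagonalizable, so applying Theorem \ref{thm:inner diagonalization on SMA} to the (trivially commuting) singleton family $\ca{F} = \{X\}$ produces $S \in \ca{A}_\rho^\times$ with $S^{-1}XS = \diag(\lambda_1,\ldots,\lambda_n) \in \ca{D}_n$, where the $\lambda_i$ are necessarily pairwise distinct. Hence $X = S\diag(\lambda_1,\ldots,\lambda_n)S^{-1}$ belongs to the target set, and it suffices to prove that $\ca{G}$ is dense in $\ca{A}_\rho$.

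For the density of $\ca{G}$, I would consider the discriminant $\Delta(X) := \operatorname{disc}(k_X)$ of the characteristic polynomial as a function on $\ca{A}_\rho$. Since the coefficients of $k_X$ are polynomials in the entries of $X$ and the discriminant is a polynomial in those coefficients, the restriction $\Delta|_{\ca{A}_\rho}$ is a complex polynomial function on the finite-dimensional space $\ca{A}_\rho$, and $\Delta(X) \neq 0$ holds exactly when $X \in \ca{G}$. The crucial point is that this polynomial is not identically zero: the matrix $\Lambda_n = \diag(1,\ldots,n)$ lies in $\ca{D}_n \subseteq \ca{A}_\rho$ and has the $n$ distinct eigenvalues $1,\ldots,n$, so $\Delta(\Lambda_n) \neq 0$. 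As the zero set of a nonzero polynomial on a finite-dimensional complex vector space has empty interior, its complement $\ca{G}$ is open and dense in $\ca{A}_\rho$, which finishes the argument.

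The argument is short, and the only genuine subtlety — where I expect the main obstacle to hide — is the requirement that the diagonalizing similarity be realized by some $S \in \ca{A}_\rho^\times$ rather than by an arbitrary invertible matrix of $M_n$. This is exactly the structural input supplied by Theorem \ref{thm:inner diagonalization on SMA}, and it is precisely why the statement is phrased via conjugation inside $\ca{A}_\rho^\times$; once that input is available, the remaining density is the elementary genericity statement above, for which $\Lambda_n$ is the needed non-degenerate witness ensuring $\Delta|_{\ca{A}_\rho} \not\equiv 0$.
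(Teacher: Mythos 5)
Your proof is correct, and it takes a genuinely different route from the paper's. Both arguments rest on the same structural input: Theorem \ref{thm:inner diagonalization on SMA} is what upgrades an arbitrary diagonalizing similarity to one realized by some $S \in \ca{A}_\rho^\times$, and the paper invokes it in exactly this role. Where you diverge is in proving that matrices in $\ca{A}_\rho$ with $n$ pairwise distinct eigenvalues are dense. The paper argues in two cases: first, for SMAs satisfying $\diag(M_{k_1},\ldots,M_{k_p}) \subseteq \ca{A}_\rho \subseteq \ca{A}_{k_1,\ldots,k_p}$, it applies Schur triangularization blockwise via a unitary $U \in \ca{A}_\rho^\times$ and then perturbs the diagonal of the resulting triangular matrix to make the eigenvalues pairwise distinct; second, it reduces the general SMA to this situation by a permutation similarity (using \cite{Akkurt}, cf.\ \cite{GogicTomasevic}). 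Your discriminant argument replaces both steps at once: $X \mapsto \operatorname{disc}(k_X)$ is a polynomial function on $\ca{A}_\rho$, not identically zero because of the witness $\Lambda_n \in \ca{D}_n \subseteq \ca{A}_\rho$, so its zero set has empty interior and its complement --- your set $\ca{G}$ --- is open and dense in $\ca{A}_\rho$. This is shorter, avoids the case distinction, the permutation-conjugation bookkeeping and the external reduction lemma, and yields openness of $\ca{G}$ for free. What the paper's route buys is explicitness --- for given $A$ and $\varepsilon$ it constructs a concrete nearby matrix $U\Theta U^*$ with distinct eigenvalues at Frobenius distance less than $\varepsilon$ --- and it stays entirely within elementary matrix analysis, without appealing to the fact that proper algebraic subsets of a complex vector space have empty interior.
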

	\begin{proof} \setcounter{case}{0}
		Following the notation of \cite{GogicPetekTomasevic}, by  $\ca{A}_{k_1,\ldots,k_p}$ 
		we denote the block upper-triangular subalgebra of $M_n$ whose diagonal blocks are $M_{k_1}, \ldots, M_{k_p}$, where $k_1+\cdots + k_p=n$. 
		\begin{case} First we consider the case when
			$\diag(M_{k_1}, \ldots, M_{k_p}) \subseteq \ca{A}_\rho \subseteq \ca{A}_{k_1,\ldots,k_p}$. Let $A \in \ca{A}_\rho$ be arbitrary and let $\varepsilon > 0$. By applying the Schur triangularization on each diagonal block, we obtain a unitary block-diagonal matrix $U \in \ca{A}_\rho^\times$ such that $U^*AU \in \ca{T}_n$. Let $\Theta \in \ca{T}_n$ be a matrix which is identical to $U^*AU$ outside the diagonal, while its diagonal $D \in \ca{D}_n$ consists of pairwise distinct complex numbers $\Theta_{11}, \ldots, \Theta_{nn}$ such that
			$$\sum_{k \in [1,n]} \abs{(U^*AU)_{kk} - \Theta_{kk}}^2 < \varepsilon^2.$$
			Clearly, $\supp \Theta \subseteq \supp (U^*AU) \cup \Delta_n \subseteq \rho$, so $\Theta \in \ca{A}_\rho$. If $\norm{\cdot}_F$ denotes the Frobenius norm on $M_n$, we have
			$$\norm{A - U\Theta U^*}_F = \norm{U^*AU - \Theta}_F = \left(\sum_{k \in [1,n]} \abs{(U^*AU)_{kk} - \Theta_{kk}}^2\right)^{\frac12} < \varepsilon.$$
			Since $U\Theta U^* \in \ca{A}_{\rho}$ has $n$ distinct eigenvalues, it remains to apply Theorem \ref{thm:inner diagonalization on SMA}.
		\end{case}
		
		\begin{case}
			Now we consider the general case. By  \cite[p.\ 432]{Akkurt} (see also \cite[Lemma 3.2]{GogicTomasevic}), there exists a permutation $\pi \in S_n$ such that $$\diag(M_{k_1}, \ldots, M_{k_p}) \subseteq R_{\pi}\ca{A}_{\rho} R_{\pi}^{-1} \subseteq \ca{A}_{k_1,\ldots,k_p},$$
			where $R_\pi\in M_n^\times$ is defined by \eqref{eq:definition of permutation matrix}. By Case 1, the set $$\ca{S} := \{S\diag(\lambda_1,\ldots,\lambda_n)S^{-1} : S\in (R_{\pi}\ca{A}_{\rho} R_{\pi}^{-1})^\times =R_{\pi}\ca{A}_{\rho}^{\times} R_{\pi}^{-1}, \lambda_1,\ldots,\lambda_n \in \C\text{ p.\ d.}\}$$
			is dense in $R_{\pi}\ca{A}_{\rho} R_{\pi}^{-1}$, which immediately implies that $R_{\pi}^{-1}\ca{S}R_{\pi}$, which equals 
			\begin{align*}
				&\{(R_{\pi}^{-1}SR_{\pi})\diag(\lambda_{\pi^{-1}(1)},\ldots,\lambda_{\pi^{-1}(n)})(R_{\pi}^{-1}SR_{\pi})^{-1} : S\in R_{\pi}\ca{A}_{\rho}^{\times} R_{\pi}^{-1}, \lambda_1,\ldots,\lambda_n \in \C\text{ p.\ d.}\}\\
				&= \{T\diag(\mu_1,\ldots,\mu_n)T^{-1} : T\in \ca{A}_\rho^\times, \mu_1,\ldots,\mu_n \in \C\text{ p.\ d.}\},
			\end{align*}
			is dense in $\ca{A}_{\rho}$ (where p.\ d.\ abbreviates ``pairwise distinct'').
		\end{case}
	\end{proof}
	
	Following the notation from \cite{GogicTomasevic}, for a quasi-order $\rho$ on $[1,n]$, by $\tripprox_0$ we denote the relation on $[1,n]$ given by
	$$i \tripprox_0 j \stackrel{\text{def}}\iff (i,j) \in \rho \text{ or }(j,i) \in \rho,$$
	while by $\tripprox$ we denote its transitive closure, which is an equivalence relation. The respective quotient set $[1,n]/\mathop{\tripprox}$ will be denoted by $\ca{Q}$. Now we introduce a new auxiliary definition: we say that an SMA $\ca{A}_\rho \subseteq M_n$ is \emph{2-free} if $\abs{C} \ne 2$ for all $C \in \ca{Q}$. Note that by \cite[Remark 3.3]{GogicTomasevic}, each SMA $\ca{A}_\rho \subseteq M_n$ is isomorphic to a direct sum of central SMAs (each contained in some $M_k$ for some $1 \le k \le n$). In this context, an SMA is $2$-free if and only if this decomposition does not possess a central summand contained in $M_2$ (i.e.\ isomorphic either to $M_2$ or $\ca{T}_2$). Also note that this condition precludes the existence of nonlinear maps $\ca{A} \to M_n$ such as on $M_2$ (from \cite[Example 7]{PetekSemrl}) and $\ca{T}_2$ (from \cite[Theorem 4]{Petek}).
	
	\begin{remark}\label{re:invariant under similarity}
		Suppose that $\ca{A} \subseteq M_n$ is an arbitrary subalgebra and let $S \in M_n^\times$. Clearly, every injective continuous commutativity and spectrum preserver $\ca{A} \to M_n$ is a Jordan embedding if and only if the same holds for the corresponding maps $S^{-1}\ca{A}S \to M_n$.
	\end{remark}
	
	\begin{proposition}\label{prop:necessarily multiplicative}
		Let $\ca{A}_\rho \subseteq M_n$ be a 2-free SMA and let $\phi : \ca{A}_\rho \to M_n$ be an injective continuous commutativity and spectrum preserver. Then there exists $S \in M_n^\times$, a transitive map $g : \rho \to \C^\times$ and quasi-orders $\rho_M^{\phi}, \rho_A^{\phi} \subseteq \rho$ such that  $\rho^\phi_M \cup \rho^\phi_A = \rho, \rho^\phi_M \cap \rho^\phi_A = \Delta_n$ and
		$$\phi(E_{ij}) = \begin{cases}
			g(i,j)SE_{ij} S^{-1}, &\text{ if } (i,j) \in \rho_M^{\phi},\\
			g(i,j)SE_{ji} S^{-1}, &\text{ if } (i,j) \in \rho_A^{\phi}.
		\end{cases}$$
	\end{proposition}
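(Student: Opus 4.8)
The plan is to determine $\phi$ on the matrix units in three stages: first normalise the action on the diagonal, then pin down the shape of each $\phi(E_{ij})$, and finally organise the resulting data into the transitive map $g$ and the two quasi-orders.

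\textbf{Normalising the diagonal.} Since $\ca{D}_n$ is abelian and $\phi$ preserves commutativity, $\phi(\ca{D}_n)$ is a commuting family; fixing some $D_0\in\ca{D}_n$ with $n$ distinct entries, spectrum preservation forces $\phi(D_0)$ to have $n$ distinct eigenvalues, so its commutant is maximal abelian and simultaneously diagonalises all of $\phi(\ca{D}_n)$. Conjugating by a suitable $S_0\in M_n^\times$ (Remark \ref{re:invariant under similarity}) I may assume $\phi(\ca{D}_n)\subseteq\ca{D}_n$. The set of diagonal matrices with pairwise distinct entries is connected (its complement is a finite union of complex hyperplanes) and dense, and on it spectrum preservation together with continuity shows that $\phi$ permutes the diagonal entries by a fixed permutation; absorbing this permutation into $S_0$ I reduce to $\phi|_{\ca{D}_n}=\mathrm{id}$, and in particular $\phi(E_{ii})=E_{ii}$ for all $i$. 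Finally, since $0$ commutes with everything, $\phi(0)$ is a nilpotent commuting with $\ca{D}_n$, whence $\phi(0)=0$; thus $\phi(X)\ne 0$ whenever $X\ne 0$, by injectivity.

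\textbf{The shape of $\phi(E_{ij})$.} Fix $(i,j)\in\rho^\times$. For $l\notin\{i,j\}$ one has $E_{ij}\leftrightarrow E_{ll}$, so $\phi(E_{ij})\leftrightarrow E_{ll}=\phi(E_{ll})$, which forces $\phi(E_{ij})$ to be supported on $\Delta_n\cup\{(i,j),(j,i)\}$; since $E_{ij}^2=0$ its image is nilpotent, so every off-block diagonal entry vanishes and the $\{i,j\}$-block is a nilpotent $\begin{bmatrix} d & \alpha\\ \beta & -d\end{bmatrix}$ with $\alpha\beta=-d^2$. Here the $2$-freeness enters: the $\tripprox$-class of $i,j$ has at least three elements, so there is $k\ne i,j$ in it, and by transitivity of $\rho$ one of the units $E_{ik}$ or $E_{kj}$ lies in $\ca{A}_\rho$ and commutes with $E_{ij}$. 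Writing the analogous decomposition for the image of that unit and expanding the (vanishing) commutator of the two images over the distinct matrix units $E_{ik},E_{ji},E_{jk},E_{ij},E_{ki},E_{kj}$, a short computation forces $d=0$ and exactly one of $\alpha,\beta$ to be zero. Consequently $\phi(E_{ij})=g(i,j)E_{ij}$ or $\phi(E_{ij})=g(i,j)E_{ji}$ for some $g(i,j)\in\C^\times$ (nonzero by the previous paragraph). I then set $\rho_M^\phi:=\Delta_n\cup\{(i,j)\in\rho^\times:\phi(E_{ij})\parallel E_{ij}\}$ and $\rho_A^\phi:=\Delta_n\cup\{(i,j)\in\rho^\times:\phi(E_{ij})\parallel E_{ji}\}$; these cover $\rho$ and meet exactly in $\Delta_n$.

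\textbf{Transitivity.} To see that $\rho_M^\phi$ and $\rho_A^\phi$ are quasi-orders I prove a local consistency statement: any two edges sharing a source, $(i,j),(i,k)\in\rho$, have the same type, and likewise any two edges sharing a target. Indeed $E_{ij}\leftrightarrow E_{ik}$, so $\phi(E_{ij})\leftrightarrow\phi(E_{ik})$; if the types differed the images would be proportional to $E_{ij}$ and $E_{ki}$ (or to $E_{ji}$ and $E_{ik}$), which do not commute — a contradiction. Transitivity of $\rho_M^\phi$ (and of $\rho_A^\phi$) is then immediate, since for $(i,j),(j,k)\in\rho_M^\phi$ the edge $(i,k)\in\rho$ shares the source $i$ with $(i,j)$ and hence is again of type $M$. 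The remaining and most delicate point is the multiplicativity of the scalars, $g(i,j)g(j,k)=g(i,k)$. This cannot be read off from commutativity alone, as it encodes the product $E_{ij}E_{jk}=E_{ik}$; the plan is to restrict attention to the chain $\{i,j,k\}$ and to exploit that the image of a \emph{nilpotent} block matrix has no diagonal leakage outside the block (its off-block diagonal entries are eigenvalues, hence zero), which lets one reconstruct the Jordan-embedding behaviour on the chain via Theorem \ref{thm:parabolic} and extract the relation $g(i,j)g(j,k)=g(i,k)$. I expect this scalar multiplicativity to be the main obstacle: away from nilpotent and unit elements $\phi$ need not preserve rank, so one must carefully isolate the part of $\phi$ on the chain that does behave like a Jordan embedding before the multiplicative relation among the $g$-values can be recovered.
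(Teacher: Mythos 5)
Your first two stages are essentially correct, and in places leaner than the paper's own route: the paper first builds up homogeneity, rank-one preservation on $\overline{\ca{R}}$ and orthogonality preservation (its Claims \ref{cl:phi is homogeneous}--\ref{cl:preserves orthogonality}) before pinning down $\phi(E_{ij})$, whereas you get the same conclusion from commutation with $\phi(E_{ll})=E_{ll}$, nilpotency of the image, injectivity, and a direct commutator computation against a second matrix unit. One small repair is needed in your use of $2$-freeness: the index $k$ must be chosen $\tripprox_0$-adjacent to $\{i,j\}$ (take the first exit of a $\tripprox_0$-chain leaving $\{i,j\}$); for an \emph{arbitrary} third element of the $\tripprox$-class none of $(i,k),(k,i),(j,k),(k,j)$ need lie in $\rho$, so transitivity cannot be invoked to produce $E_{ik}$ or $E_{kj}$. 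Your local-consistency argument (shared source or shared target forces equal type) and the resulting quasi-order property of $\rho^\phi_M,\rho^\phi_A$ are correct.

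The genuine gap is exactly where you flag it: the relation $g(i,j)g(j,k)=g(i,k)$, which is the heart of the proposition. To apply Theorem \ref{thm:parabolic} on the chain $\{i,j,k\}$ you need the map $X \mapsto \phi(X^{\sharp(\{i,j,k\}^c)})^{\flat(\{i,j,k\}^c)}$ to be a well-defined injective continuous commutativity \emph{and spectrum} preserver on $\ca{A}_\rho^{\flat(\{i,j,k\}^c)} \supseteq \ca{T}_3$, and for that you must know that $\phi$ maps \emph{every} matrix of $\ca{A}_\rho$ supported in $\{i,j,k\}\times\{i,j,k\}$ — not just the nilpotent ones — to a matrix supported there. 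Your ``no diagonal leakage'' observation breaks down for non-nilpotent $X$: commutation with the $E_{ll}$, $l \notin \{i,j,k\}$, only confines the support to the block together with the diagonal, and the stray diagonal entries are eigenvalues of $\phi(X)$, which need not vanish. The paper closes this hole with real machinery: characteristic-polynomial preservation (Claim \ref{cl:preserves characteristic polynomial}, via Lemma \ref{le:diagonals are dense} and continuity); the fact that each conjugated diagonal family $S\ca{D}_nS^{-1}$, $S\in\ca{A}_\rho^\times$, is mapped onto some $T\ca{D}_nT^{-1}$ (Claim \ref{cl:preserves diagonalizability}, re-running the \v{S}emrl lemma argument); hence zero-product preservation for commuting diagonalizable pairs via Theorem \ref{thm:inner diagonalization on SMA} (Claim \ref{cl:preserves zero-product}), which kills the whole $l$-th row and column including the $(l,l)$ entry; and finally density plus continuity (Claims \ref{cl:phi preserves support} and \ref{cl:restriction of phi}). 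Without this chain (or a substitute) your plan for the multiplicative relation does not get off the ground. Moreover, even granting it, the restriction argument only treats $i\ne k$: transitivity of $g$ also demands $g(i,j)g(j,i)=1$ whenever $(i,j),(j,i)\in\rho$, and there the chain collapses to two indices, where Theorem \ref{thm:parabolic} is unavailable (this is precisely the $M_2/\ca{T}_2$ obstruction that $2$-freeness exists to circumvent); the paper settles it by producing a third index $l$ via $2$-freeness and applying the $i \ne k$ case twice. Relatedly, your local consistency never links the types of $(i,j)$ and $(j,i)$, so the mixed configuration $\phi(E_{ij}) \parallel E_{ij}$, $\phi(E_{ji}) \parallel E_{ij}$ must also be excluded — the paper does this using injectivity together with homogeneity, a property your proposal never establishes.
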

	\begin{proof}
		In view of Theorem \ref{thm:Semrl}, we may assume throughout that $\ca{A}_\rho \subsetneq M_n$.
		\begin{claim}\label{cl:preserves characteristic polynomial}
			$\phi$ preserves characteristic polynomial.
		\end{claim}
		\begin{claimproof}
			$\phi$ clearly preserves characteristic polynomial on the set $$\{S\diag(\lambda_1,\ldots,\lambda_n)S^{-1} : S\in \ca{A}_\rho^\times, \lambda_1,\ldots,\lambda_n \in \C\text{ pairwise distinct}\},$$
			so the claim follows by the continuity of $\phi$ and the characteristic polynomial $k_{(\cdot)} : M_n \to \C[x]$ (see also the proof of \cite[Lemma 4.1]{GogicPetekTomasevic}).
		\end{claimproof}
		
		\begin{claim}\label{cl:identity on diagonals}
			Without loss of generality we can assume $\phi(\Lambda_n) = \Lambda_n$ and hence that $\phi$ acts as the identity map on $\mathcal{D}_n$.
		\end{claim}
		\begin{claimproof}
			Since the matrix $\phi(\Lambda_n) \in M_n$ has eigenvalues $1,\ldots, n$, there exists an $S \in M_n^{\times}$ such that $\phi(\Lambda_n) = S\Lambda_n S^{-1}$. By passing to the map $S^{-1}\phi(\cdot)S$, we can assume $\phi(\Lambda_n) = \Lambda_n.$ Fix an arbitrary $D \in \mathcal{D}_n$. We have $D \leftrightarrow \Lambda_n$ and hence $\phi(D) \leftrightarrow \phi(\Lambda_n) = \Lambda_n$. We conclude $\phi(D) \in \ca{D}_n$. The same argument from \cite[Lemma~2.1]{Semrl} (see also the proof of Step 1 of \cite[Theorem~1.4]{GogicPetekTomasevic}) now gives that $\phi(D)=D$. 
			
		\end{claimproof}
		
		In view of Claim \ref{cl:identity on diagonals}, in the sequel we assume that $n \ge 3$ (as when $n < 3$, the only 2-free SMA is $\ca{D}_n$) and that $\phi$ acts as the identity map on $\mathcal{D}_n$.
		
		\begin{claim}\label{cl:preserves diagonalizability}
			For each $S \in \mathcal{A}_\rho^\times$ there exists $T \in M_n^\times$ such that $$\phi(SDS^{-1}) = TDT^{-1}, \qquad \text{ for all }D \in \mathcal{D}_n.$$
		\end{claim}
		\begin{claimproof}
			For a fixed $S \in \ca{A}_\rho^\times$ there exists $T \in M_n^\times$ such that $\phi(S\Lambda_n S^{-1}) = T \Lambda_n T^{-1}$. Now we can apply Claim \ref{cl:identity on diagonals} to the map $T^{-1}\phi(S(\cdot)S^{-1})T$ which satisfies the same properties as $\phi$, as well as $\Lambda_n \mapsto \Lambda_n$.
		\end{claimproof}
		
		\begin{claim}\label{cl:preserves zero-product}
			Let $A,B \in \mathcal{A}_\rho$ be two diagonalizable matrices such that $A \perp B$. Then $\phi(A) \perp \phi(B)$.
		\end{claim}
		\begin{claimproof}
			Follows directly from Theorem \ref{thm:inner diagonalization on SMA} and Claim \ref{cl:preserves diagonalizability}.
		\end{claimproof}
		
		\begin{claim}\label{cl:phi is homogeneous}\phantom{x}
			\begin{enumerate}[(a)]
				\item Let $A,B \in \ca{A}_\rho$ be diagonalizable matrices such that $A \leftrightarrow B$. Then $\phi(\alpha A + \beta B) = \alpha \phi(A) + \beta\phi(B)$ for all $\alpha,\beta \in \C$.
				\item $\phi$ is a homogeneous map.
			\end{enumerate}
		\end{claim}
		\begin{claimproof}
			(a) follows directly from Theorem \ref{thm:inner diagonalization on SMA} and Claim \ref{cl:preserves diagonalizability}, while (b) follows from (a), Lemma \ref{le:diagonals are dense} and the continuity of $\phi$.
		\end{claimproof}
		
		\begin{claim}\label{cl:preserves rank-one}
			$\phi$ maps every nonzero matrix from $\overline{\ca{R}}$ to a rank-one matrix.
		\end{claim}
		\begin{claimproof}
			If $A \in \ca{R}$, then $A$ is diagonalizable in $\ca{A}_\rho$ (Theorem \ref{thm:inner diagonalization on SMA}) so the assertion follows directly from Claim \ref{cl:preserves diagonalizability}.
			
			Now suppose that $A \in \overline{\ca{R}}$ and let $(A_k)_{k\in \N}$ be a sequence of matrices in $\ca{R}$ such that $A_k \to A$. By continuity we have $\phi(A_k) \to \phi(A)$ and then by lower semicontinuity of the rank we conclude that $\phi(A)$ has rank one.
		\end{claimproof}
		
		\begin{claim}\label{cl:preserves orthogonality}
			Suppose that nonzero matrices $A_1,A_2 \in \overline{\ca{R}}$ satisfy $A_1 \perp A_2$. Then $\phi(A_1) \perp \phi(A_2)$.
		\end{claim}
		\begin{claimproof}
			In view of Claim \ref{cl:preserves rank-one}, for $j=1,2$ denote $\phi(A_j) = x_jy_j^*$ for some nonzero vectors $x_j,y_j \in \C^n$. Since in particular $A_1 \leftrightarrow A_2$, we obtain
			$$(y_1^*x_2) x_1y_2^* = (x_1y_1^*)(x_2y_2^*) = (x_2y_2^*)(x_1y_1^*) = (y_2^*x_1) x_2y_1^*.$$
			If $y_1^*x_2 = y_2^*x_1 = 0$, it follows $\phi(A_1) \perp \phi(A_2)$, as desired. Assume therefore $y_1^*x_2, y_2^*x_1 \ne 0$. Then $x_1y_2^* \parallel x_2y_1^*$, so $x_1 \parallel x_2$ and $y_1 \parallel y_2$. It follows $\phi(A_1) = x_1y_1^* \parallel x_2y_2^* = \phi(A_2)$, so by the injectivity and the homogeneity (Claim \ref{cl:phi is homogeneous}) of $\phi$ it follows $A_1\parallel A_2$. Then $A_2 = \alpha A_1$ for some $\alpha \in \C^\times$ so $A_1\perp A_2$ implies $A_1^2 = 0$. By Claim \ref{cl:preserves rank-one}, $\phi(A_1)$ has rank one, and it is also nilpotent since $\phi$ preserves spectrum. Therefore $\phi(A_1)^2 = 0$. By the homogeneity of $\phi$ (Claim \ref{cl:phi is homogeneous}), we conclude $\phi(A_1) \perp \alpha \phi(A_1) = \phi(\alpha A_1) = \phi(A_2)$, as desired.
		\end{claimproof}
		
		
		\begin{claim}\label{cl:parallel}
			We have $\phi(E_{ij}) \parallel E_{ij}$ or $\phi(E_{ij}) \parallel E_{ji}$ for all $(i,j) \in \rho^\times$.
		\end{claim}
		\begin{claimproof}
			By Lemma \ref{le:closure of R}, note that all matrix units of $\ca{A}_\rho$ are contained in $\overline{\ca{R}}$.
			For each $(i,j) \in \rho^\times$ we have $E_{ij}\perp E_{kk}$ for all $k \in [1,n]\setminus \{i,j\}$ so by Claims \ref{cl:identity on diagonals} and \ref{cl:preserves orthogonality} we obtain $$\supp \phi(E_{ij}) \subseteq \{i,j\} \times \{i,j\}.$$
			Via a direct computation we now show that for each $1 \le i \le n$ we have
			$$\abs{\rho(i)} \ge 3 \implies (\phi(E_{ij}) \parallel E_{ij}, \forall j \in \rho(i)) \,\text{ or }\, (\phi(E_{ij}) \parallel E_{ji}, \forall j \in \rho(i)).$$
			Indeed, if $(i,j),(i,k) \in \rho^\times$ for $j \ne k$, then there exist scalars $\alpha_{ii}, \alpha_{ij}, \alpha_{ji}, \alpha_{jj}, \beta_{ii}, \beta_{ik}, \beta_{ki}, \beta_{kk} \in \C$ such that
			\begin{align*}
				\phi(E_{ij}) &= \alpha_{ii}E_{ii} + \alpha_{ij}E_{ij} + \alpha_{ji}E_{ji} + \alpha_{jj}E_{jj},\\
				\phi(E_{ik}) &= \beta_{ii}E_{ii} + \beta_{ik}E_{ik} + \beta_{ki}E_{ki} + \beta_{kk}E_{kk}.
			\end{align*}
			Since $E_{ij} \perp E_{ik}$, by invoking Claim \ref{cl:preserves orthogonality}, we obtain 
			\begin{align*}
				0 &= \phi(E_{ij})\phi(E_{ik}) = \alpha_{ii}\beta_{ii}E_{ii} + \alpha_{ii}\beta_{ik}E_{ik} + \alpha_{ji}\beta_{ii}E_{ji} + \alpha_{ji}\beta_{ik}E_{jk},\\
				0 &= \phi(E_{ik})\phi(E_{ij}) = \alpha_{ii}\beta_{ii}E_{ii} + \alpha_{ij}\beta_{ii}E_{ij} + \alpha_{ij}\beta_{ki}E_{kj} + \alpha_{ii}\beta_{ki}E_{ki}.
			\end{align*}
			Suppose that $\beta_{ii} \ne 0$. Then $\alpha_{ii}\beta_{ii} = \alpha_{ij}\beta_{ii} = \alpha_{ji}\beta_{ii} = 0$ imply $\alpha_{ii} = \alpha_{ij} = \alpha_{ji} = 0$ and hence $\phi(E_{ij}) = \alpha_{jj}E_{jj}$, which contradicts injectivity (as $\phi(\alpha_{jj}E_{jj})=\alpha_{jj}E_{jj}$). We run into a similar contradiction when assuming $\alpha_{ii} \ne 0$ so we conclude $\alpha_{ii} = \beta_{ii} = 0$. Since $\phi(E_{ij})$ and $\phi(E_{ik})$ are rank-one nilpotents, we have
			$$0 = \phi(E_{ij})^2 = \alpha_{ij}\alpha_{ji}E_{ii} + \alpha_{ij}\alpha_{jj}E_{ij} + \alpha_{jj}\alpha_{ji}E_{ji} + (\alpha_{ij}\alpha_{ji} + \alpha_{jj}^2)E_{jj}$$
			and
			$$0 = \phi(E_{ik})^2 = \beta_{ik}\beta_{ki}E_{ii} + \beta_{ik}\beta_{kk}E_{ik} + \beta_{kk}\beta_{ki}E_{ki} + (\beta_{ik}\beta_{ki} + \beta_{kk}^2)E_{kk}.$$
			We first conclude $\alpha_{jj} = \beta_{kk} = 0$ and then $\alpha_{ij} = 0$ or $\alpha_{ji} = 0$ (but not both since otherwise we would have $\phi(E_{ij}) = 0$) and similarly $\beta_{ik} = 0$ or $\beta_{ki} = 0$ but not both. If $\alpha_{ij} \ne 0$, then $\phi(E_{ij}) = \alpha_{ij}E_{ij}$ and $\alpha_{ij}\beta_{ki} = 0$ implies $\beta_{ki} = 0$ from which we conclude $\phi(E_{ik}) = \beta_{ik}E_{ik}$. Similarly, if $\alpha_{ji} \ne 0$, we conclude $\phi(E_{ij}) = \alpha_{ji}E_{ji}$ and $\phi(E_{ik}) = \beta_{ki}E_{ki}$.
			
			\smallskip
			
			An analogous argument shows that for each $1 \le j \le n$ we have
			$$\abs{\rho^{-1}(j)} \ge 3 \implies (\phi(E_{ij}) \parallel E_{ij}, \forall i \in \rho^{-1}(j)) \,\text{ or }\, (\phi(E_{ij}) \parallel E_{ji}, \forall i \in \rho^{-1}(j)).$$
			It remains to consider the case $(i,j) \in \rho^{\times}$ when $\abs{\rho(i)} = 2$ (or $\abs{\rho^{-1}(j)} = 2$). For concreteness, assume $\rho(i) = \{i,j\}$ for some $j \in [1,n]\setminus\{i\}$. Since, by assumption,  $\ca{A}_\rho$ is 2-free, clearly there exists some $k \in [1,n]\setminus \{i,j\}$ such that $k \tripprox_0 i$ or $k \tripprox_0 j$. The possibilities $(i,k) \in \rho$ and $(j,k) \in \rho$ can be excluded, as they would lead to $k \in \rho(i)$, which is false. The possibilities which remain are $(k,i) \in \rho$ or $(k,j) \in \rho$, so in either case we can assume $(k,j) \in \rho$. Then $i,j,k \in \rho^{-1}(j)$ and hence $\abs{\rho^{-1}(j)} \ge 3$, which allows us to reach the desired conclusion that $\phi(E_{ij}) \parallel E_{ij}$ or $\phi(E_{ij}) \parallel E_{ji}$.\end{claimproof}
		
		In view of Claim \ref{cl:parallel}, for each $(i,j) \in \rho$, denote by $g(i,j) \in \C^\times$ the unique scalar such that $$\phi(E_{ij}) = g(i,j)E_{ij} \qquad\text{ or }\qquad\phi(E_{ij}) = g(i,j)E_{ji}.$$ In this manner we obtain a function $g : \rho \to \C^\times$ whose transitivity we intend to show in the remainder of the proof. As, by assumption, $\phi|_{\ca{D}_n}$ is the identity map, it is immediate that $g|_{\Delta_n} \equiv 1$. Define
		\begin{equation}\label{eq:definition of rho_M and rho_A}
			\rho^\phi_M := \{(i,j) \in \rho : \phi(E_{ij}) \parallel E_{ij}\}, \qquad \rho^\phi_A := \{(i,j) \in \rho : \phi(E_{ij}) \parallel E_{ji}\}.
		\end{equation}
		Clearly, $\rho^\phi_M \cup \rho^\phi_A = \rho$ and $\rho^\phi_M \cap \rho^\phi_A = \Delta_n.$
		\begin{claim}\label{cl:crucial property of rho_A and rho_M}
			Suppose $(i,j) \in \rho^\times$. If $(i,j) \in \rho^{\phi}_M$, then
			$$(\{i\} \times \rho(i)) \cup (\rho^{-1}(i) \times \{i\}) \cup (\{j\} \times \rho(j)) \cup (\rho^{-1}(j) \times \{j\}) \subseteq \rho^{\phi}_M.$$
			Similarly, if $(i,j) \in \rho^{\phi}_A$, then
			$$(\{i\} \times \rho(i)) \cup (\rho^{-1}(i) \times \{i\}) \cup (\{j\} \times \rho(j)) \cup (\rho^{-1}(j) \times \{j\}) \subseteq \rho^{\phi}_A.$$
		\end{claim}
		\begin{claimproof}
			For concreteness suppose that $(i,j) \in \rho^{\phi}_M$, as the other case is similar.
			\begin{itemize}
				\item If $k \in \rho^\times(i)$, then $E_{ij} \perp E_{ik}$ and Claim \ref{cl:preserves orthogonality} imply $(i,k) \in \rho^{\phi}_M$.
				\item If $k \in (\rho^\times)^{-1}(j)$, then $E_{ij} \perp E_{kj}$ and Claim \ref{cl:preserves orthogonality} imply $(k,j) \in \rho^{\phi}_M$.
				\item Let $k \in (\rho^\times)^{-1}(i)$. If $k \ne j$, then by transitivity we obtain $(k,j) \in \rho^\times$ and hence $(k,j) \in \rho^{\phi}_M$ and then $(k,i) \in \rho^{\phi}_M$, by the previous two cases. On the other hand, the case $k = j$ follows from the injectivity and homogeneity of $\phi$.
				\item Let $k \in \rho^\times(j)$. The case $k = i$ again follows from the injectivity and homogeneity of $\phi$. If $k \ne i$, then by transitivity we obtain $(i,k) \in \rho^\times$ and hence $(i,k) \in \rho^{\phi}_M$ and then $(j,k) \in \rho^{\phi}_M$, by the first two cases.
			\end{itemize}
		\end{claimproof}
		
		\begin{claim}\label{cl:phi preserves support} 
			Suppose that $X \in \ca{A}_\rho \cap M_n^{\subseteq S}$ for some $S \subseteq [1,n]$ (where $M_n^{\subseteq S}$ is defined in \eqref{eq:M_n subset}). Then $\phi(X) \in M_n^{\subseteq S}$.
		\end{claim}
		\begin{claimproof}
			Following our notation from Section \ref{sec:preliminaries}, by applying Lemma \ref{le:diagonals are dense} to the matrix $X^{\flat S^c} \in \ca{A}_\rho^{\flat S^c}$, by the continuity of $\phi$ it suffices to assume that $X$ is a diagonalizable matrix. Now the assertion follows from $X \perp E_{kk}$ for all $k \in [1,n]\setminus S$,  Claim \ref{cl:preserves zero-product} and $\phi(E_{kk}) = E_{kk}$.
		\end{claimproof}
		
		\begin{claim}\label{cl:restriction of phi}
			Let $S \subseteq [1,n]$ be a nonempty set. The map
			$$\psi : \ca{A}_\rho^{\flat S^c} \to M_{\abs{S}}, \qquad  X \mapsto \phi(X^{\sharp S^c})^{\flat S^c}$$
			is an injective continuous commutativity and spectrum preserver.
		\end{claim}
		\begin{claimproof}
			By Lemma \ref{le:deletion of SMA is SMA}, $\ca{A}_{\rho}^{\flat S^c}$ is an SMA in $M_{\abs{S}}$. In view of Claim \ref{cl:phi preserves support} it makes sense to consider the maps
			\begin{align*}
				&\psi_1 : \ca{A}_\rho^{\flat S^c} \to M_n^{\subseteq S}, \qquad X \mapsto X^{\sharp S^c},\\
				&\psi_2 : M_n^{\subseteq S} \to M_n^{\subseteq S}, \qquad X \mapsto \phi(X),\\
				&\psi_3 : M_n^{\subseteq S} \to M_{\abs{S}}, \qquad X \mapsto X^{\flat S^c}.
			\end{align*}
			Note that $\psi = \psi_3 \circ \psi_2 \circ \psi_1$. By Lemmas \ref{le:sharp is homomorphism} and \ref{le:flat is an isomorphism}, $\psi_1$ and $\psi_3$ are algebra monomorphisms so it follows that $\psi$ is an injective continuous commutativity preserver. Finally, for each $X \in \ca{A}_\rho^{\flat S^c}$  we have the equality of polynomials
			\begin{align*}
				x^{n-\abs{S}} k_{\psi(X)}(x) &= x^{n-\abs{S}} k_{\phi(X^{\sharp S^c})^{\flat S^c}}(x) =  k_{(\phi(X^{\sharp S^c})^{\flat S^c})^{\sharp S^c}}(x) \stackrel{\text{Claim }\ref{cl:phi preserves support}}= k_{\phi(X^{\sharp S^c})}(x) \\
				&\leftstackrel{\text{Claim }\ref{cl:preserves characteristic polynomial}}=   k_{X^{\sharp S^c}}(x)= x^{n-\abs{S}} k_{X}(x),
			\end{align*}
			which implies $k_{\psi(X)} = k_X$. In particular, $\psi$ is a spectrum preserver.
		\end{claimproof}
		
		It remains to show that $\rho^{\phi}_M$ and $\rho^{\phi}_A$ are quasi-orders and that the map $g$ is transitive. The reflexivity of $\rho^{\phi}_M$ and $\rho^{\phi}_A$ is immediate, while their transitivity follows from Claim \ref{cl:crucial property of rho_A and rho_M}. In order to show that $g$ is a transitive map, first note that Claim \ref{cl:crucial property of rho_A and rho_M} implies
		\begin{equation}\label{eq:transitive elements are in M or in A}
			(i,j),(j,k) \in \rho \qquad \implies \qquad ((i,j),(j,k) \in \rho^{\phi}_M \quad\text{ or }\quad (i,j),(j,k) \in \rho^{\phi}_A).
		\end{equation}
		Therefore, it suffices to show that $g|_{\rho^\phi_M}$ and $g|_{\rho^\phi_A}$ are transitive maps. We focus on $\rho^\phi_M$ as the proof for $\rho^\phi_A$ is analogous. Suppose that $(i,j),(j,k) \in \rho^\phi_M$. We show that $g(i,j)g(j,k) = g(i,k)$. This is obvious if $i = j$ or $j = k$, so assume further that  $(i,j),(j,k) \in (\rho^\phi_M)^\times$. We first focus on the case $i \ne k$. By deleting $l$-th row and column in $\ca{A}_\rho$ for each $l \in [1,n]\setminus \{i,j,k\}$, the elements which remain are
		$$\begin{bmatrix}
			(i,i) & (i,j) & (i,k) \\
			* & (j,j) & (j,k) \\
			* & * & (k,k)
		\end{bmatrix}.$$
		Therefore, $\ca{A}_\rho^{\flat(\{i,j,k\}^c)} \subseteq M_3$ contains $\ca{T}_3$. Hence, in view of Lemma \ref{le:deletion of SMA is SMA} and Claim \ref{cl:restriction of phi} we can apply Theorem \ref{thm:parabolic} to the map
		$$\psi : \ca{A}_\rho^{\flat(\{i,j,k\}^c)} \to M_3, \qquad X \mapsto \phi(X^{\sharp (\{i,j,k\}^c)})^{\flat(\{i,j,k\}^c)}$$
		to conclude that $\psi$ is a Jordan embedding (and thus multiplicative or antimultiplicative). As $(i,j),(j,k) \in \rho^\phi_M$, one easily verifies 
		$$\psi|_{\ca{D}_3} = \id, \qquad \psi(E_{12}) = g(i,j)E_{12}, \qquad \psi(E_{13}) = g(i,k)E_{13}, \qquad \psi(E_{23}) = g(j,k)E_{23},$$
		whence $\psi$ is a multiplicative map. In particular, we obtain
		\begin{align*}
			g(i,k)E_{13}& = \psi(E_{13}) = \psi(E_{12}E_{23}) = \psi(E_{12})\psi(E_{23})\\
			&= g(i,j)g(j,k)E_{13},
		\end{align*}
		yielding $g(i,k) = g(i,j)g(j,k)$.
		
		It remains to consider the shorter case $i = k$. Since $\ca{A}_\rho$ is 2-free, there exists some $l \in [1,n]\setminus \{i,j\}$ such that $i \tripprox_0 l$ or $j \tripprox_0 l$. Since $i$ and $j$ play a symmetric role, without loss of generality suppose the former. If $(i,l) \in \rho^\times$, then from $(j,i) \in \rho^{\phi}_M$ via Claim \ref{cl:crucial property of rho_A and rho_M} we conclude $(i,l) \in (\rho^{\phi}_M)^\times$. Now from $(j,i), (i,l) \in (\rho^{\phi}_M)^\times$ by the previous case it follows $(j,l) \in (\rho^{\phi}_M)^\times$ and
		$$g(j,l)= g(j,i)g(i,l), \qquad g(i,l) = g(i,j)g(j,l).$$
		We obtain
		$$g(i,j)g(j,i) = \frac{g(i,l)}{g(j,l)} \cdot \frac{g(j,l)}{g(i,l)} = 1$$
		as desired. On the other hand, if $(l,i) \in \rho^\times$ then similarly we obtain
		$$g(l,j) = g(l,i)g(i,j), \qquad g(l,i) = g(l,j)g(j,i),$$
		again yielding $g(i,j)g(j,i) = 1$. The proof of Proposition \ref{prop:necessarily multiplicative} is now complete.
	\end{proof}
	
	\begin{theorem}\label{thm:main result}
		Let $\ca{A}_\rho \subseteq M_n$ be an SMA. Then the following two statements are equivalent:
		\begin{enumerate}[(i)]
			\item For each $(i,j) \in \rho^\times$ we have 
			$$
			|(\rho(i) \cup \rho^{-1}(i)) \cap (\rho(j) \cup \rho^{-1}(j))| \geq 3.
			$$ 
			\item Every continuous injective commutativity and spectrum preserver $\phi : \ca{A}_\rho \to M_n$ is necessarily a Jordan embedding.
		\end{enumerate}
	\end{theorem}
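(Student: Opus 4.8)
We prove the equivalence by establishing the two implications separately; the forward direction builds on Proposition \ref{prop:necessarily multiplicative}, and the converse is handled by contraposition.

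\emph{(i) $\Rightarrow$ (ii).} First I would note that (i) forces $\ca{A}_\rho$ to be $2$-free: if some class $C \in \ca{Q}$ had $\abs{C} = 2$, say $C = \{i,j\}$ with $(i,j) \in \rho^\times$, then $\rho(i) \cup \rho^{-1}(i)$ and $\rho(j) \cup \rho^{-1}(j)$ would both be contained in $C$, so their intersection would have at most two elements, contradicting (i). Hence Proposition \ref{prop:necessarily multiplicative} applies and produces $S \in M_n^\times$, a transitive map $g : \rho \to \C^\times$, and quasi-orders $\rho^\phi_M, \rho^\phi_A$ with $\rho^\phi_M \cup \rho^\phi_A = \rho$, $\rho^\phi_M \cap \rho^\phi_A = \Delta_n$, and the prescribed action of $\phi$ on the matrix units. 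Using the homogeneity of $\phi$ (Claim \ref{cl:phi is homogeneous}), I would conjugate the map $\phi$ by $S$ and pre-compose it with the algebra automorphism $(g^*)^{-1}$ of $\ca{A}_\rho$ (operations that preserve all the hypotheses as well as the property of being a Jordan embedding), so that without loss of generality $\phi(E_{ij}) = E_{ij}$ for $(i,j) \in \rho^\phi_M$ and $\phi(E_{ij}) = E_{ji}$ for $(i,j) \in \rho^\phi_A$.

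Next I would extract the candidate Jordan embedding. The propagation property established in Claim \ref{cl:crucial property of rho_A and rho_M} shows that whenever a single non-diagonal matrix unit incident to a vertex $v$ lies in $\rho^\phi_M$ (resp.\ $\rho^\phi_A$), then \emph{all} non-diagonal units incident to $v$ lie in $\rho^\phi_M$ (resp.\ $\rho^\phi_A$); since $\rho^\phi_M \cap \rho^\phi_A = \Delta_n$, this assigns to each non-isolated vertex a well-defined type, and adjacent vertices share it. Consequently each $\tripprox$-class is monochromatic, so $P := \sum_{i \in \Omega} E_{ii}$, where $\Omega$ is the union of the $M$-classes, is a central idempotent of $\ca{A}_\rho$. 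The linear map $\Psi(X) := PX + (I-P)X^t$ is then a Jordan embedding of the form appearing in Theorem \ref{thm:Jordan embeddings of SMAs} (being the identity on the central ideal $P\ca{A}_\rho$ and the transpose antihomomorphism on its complement), and it agrees with $\phi$ on $\ca{D}_n$ and on every matrix unit.

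It remains to prove $\phi = \Psi$, equivalently that $\phi$ is additive, since it is already homogeneous. This is where condition (i) is used decisively. Fix $(i,j) \in \rho^\times$ and pick a third index $k$ in $(\rho(i) \cup \rho^{-1}(i)) \cap (\rho(j) \cup \rho^{-1}(j))$. A short case analysis on the four possible orientations of the edges joining $k$ to $i$ and to $j$ shows that the $3$-element sub-SMA $\ca{A}_\rho^{\flat\{i,j,k\}^c}$, after a permutation of indices, contains $\ca{T}_3$ and is therefore block upper-triangular. By Lemma \ref{le:deletion of SMA is SMA} and Claim \ref{cl:restriction of phi} the associated restriction is again an injective continuous commutativity and spectrum preserver, so Theorem \ref{thm:parabolic} forces it to be a Jordan embedding; in particular $\phi$ is linear on matrices supported in $\{i,j,k\} \times \{i,j,k\}$. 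The plan is then to upgrade this local additivity, together with the linearity of $\phi$ on commuting families of diagonalizable matrices (Claim \ref{cl:phi is homogeneous}(a)) and the density of diagonalizable matrices (Lemma \ref{le:diagonals are dense}), to global additivity of $\phi$, whence $\phi = \Psi$ by continuity. \emph{This patching of the local linearity into full additivity of $\phi$ is the main obstacle}: in contrast to the block upper-triangular situation, $\phi$ need not preserve rank-one matrices, so the Fundamental Theorem of Projective Geometry is unavailable and one must instead reconstruct $\phi$ from its restrictions to the overlapping $3$-element blocks supplied by (i).

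\emph{(ii) $\Rightarrow$ (i).} I would argue by contraposition. Suppose (i) fails for some $(i,j) \in \rho^\times$, so that $(\rho(i) \cup \rho^{-1}(i)) \cap (\rho(j) \cup \rho^{-1}(j)) = \{i,j\}$; thus no third index is comparable to both $i$ and $j$. This turns $\{i,j\}$ into a $2 \times 2$ bottleneck, and the idea is to transplant onto the corresponding block a nonlinear continuous commutativity and spectrum preserver of $M_2$ or $\ca{T}_2$ (such maps exist by \cite{PetekSemrl} and \cite{Petek}), while keeping $\phi$ essentially trivial elsewhere. The disjointness forced by the failure of (i) is precisely what guarantees that commutativity and spectrum remain globally preserved and that injectivity and continuity persist, whereas the transplanted map is not linear and hence $\phi$ is not a Jordan embedding. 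This contradicts (ii) and completes the proof.
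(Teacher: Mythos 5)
Your forward direction stalls exactly where the real work begins, and you say so yourself: after normalizing $\phi$ on the matrix units and constructing the central idempotent $P$ (which matches the paper's Claim \ref{cl:the idempotent corresponding to rho_M}), you still have to show $\phi(X)=PX+(I-P)X^t$ for all $X$, and your ``plan'' of patching linearity on the $3\times 3$ blocks $\{i,j,k\}$ into global additivity is left unexecuted --- and it is not the mechanism that can be made to work, since a general matrix of $\ca{A}_\rho$ (in particular a general rank-one $ab^*\in\ca{A}_\rho$) is not supported in any such block, and additivity inside each block says nothing about additivity across blocks. The paper's actual route is different: it first proves (Claim \ref{cl:identity on R}) that $\phi$ fixes every rank-one \emph{non-nilpotent} $ab^*\in\ca{R}\cap\ca{A}_{\rho_M^{\phi}}$ (and transposes those in $\ca{A}_{\rho_A^{\phi}}$), not via additivity but via orthogonality: one introduces the auxiliary rank-one matrices $A=(\overline{b_j}e_i-\overline{b_i}e_j)(\overline{a_i}e_j-\overline{a_j}e_i)^*$, proves $\phi(A)=A$ by invoking the already-settled $n=3$ case on a block $\{i,j,k\}$ supplied by (i), and then uses preservation of orthogonality (Claim \ref{cl:preserves orthogonality}) against the family of all such $A$ to force $\phi(ab^*)\parallel ab^*$, the scalar being fixed by equating traces. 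Claim \ref{cl:penultimate step} then transports this to all of $\ca{A}_{\rho_M^{\phi}}$ and $\ca{A}_{\rho_A^{\phi}}$ through conjugated diagonal families $SDS^{-1}$ and density (Lemma \ref{le:diagonals are dense}), and only at the very end is $\phi(X)=PX+(I-P)X^t$ obtained by splitting a diagonalizable $X$ as $PX+(I-P)X$ and applying additivity on commuting diagonalizable pairs (Claim \ref{cl:phi is homogeneous}). What you flag as ``the main obstacle'' is precisely the core of the theorem, and it remains unproved in your proposal.

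The converse direction also has a gap. Your transplant idea is sound only in the paper's Case 1, where $(s,r)\in\rho$, so that $\{r,s\}$ really spans a central summand isomorphic to $M_2$ and one can insert an injectivity-corrected Petek--\v{S}emrl counterexample and extend by the identity. But when $(s,r)\notin\rho$, the failure of (i) only yields $\rho^{-1}(r)=\{r\}$ and $\rho(s)=\{s\}$; the vertex $r$ may still have other out-neighbours and $s$ other in-neighbours (for instance $\rho=\Delta_3\cup\{(1,2),(1,3)\}$ with $(r,s)=(1,2)$), so $\{r,s\}$ is \emph{not} a direct summand and there is no isolated block to transplant onto ``while keeping $\phi$ essentially trivial elsewhere.'' The paper handles this case by writing down the explicit map $\phi(X)=(X-X_{rs}E_{rs})+f(X_{ss}-X_{rr},X_{rs})E_{rs}$ and verifying the commutation criterion \eqref{eq:commutativity criterion} by a direct computation that uses \eqref{eq:r and s} in an essential way; your sentence asserting that ``disjointness guarantees'' global preservation of commutativity and spectrum is exactly the verification that is missing, and it is not a formality, since the modified entry $(r,s)$ does interact with the rest of the algebra.
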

	\begin{remark}\label{re:equivalent formulation of (i)}
		Note that for $(i,j) \in \rho^\times$, the condition (i) of Theorem \ref{thm:main result} (i) is equivalent to
		$$
		(\rho(i) \cup \rho^{-1}(i)) \cap (\rho(j) \cup \rho^{-1}(j)) \not \subseteq \{i,j\}.
		$$
		In particular, (i) implies that $\ca{A}_\rho$ is $2$-free. 
	\end{remark}

	In the next lemma we first consider the $n=3$ case.
	\begin{lemma}\label{le:(i) iff parabolic}
		Let $\ca{A}_\rho \subseteq M_3$ be an SMA distinct from $\ca{D}_3$. The following statements are equivalent:
		\begin{enumerate}[(a)]
			\item $\ca{A}_\rho$ satisfies the condition (i) from Theorem \ref{thm:main result}.
			\item There exist distinct $i,j \in [1,3]$ such that $\rho(i) = \rho^{-1}(j) = [1,3]$.
			\item There exists a permutation $\pi \in S_3$ such that $R_{\pi}\ca{A}_\rho R_{\pi}^{-1}$ is a block upper-triangular subalgebra of $M_3$.
		\end{enumerate}
	\end{lemma}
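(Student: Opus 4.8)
The plan is to establish the cyclic chain of implications $(a)\Rightarrow(b)\Rightarrow(c)\Rightarrow(a)$, the unifying observation being that, once $\ca{A}_\rho\neq\ca{D}_3$, each of the three conditions is equivalent to $\rho$ being a \emph{total} quasi-order on $[1,3]$, i.e.\ one in which every two indices are comparable (related in at least one direction). All three conditions are manifestly invariant under conjugation by a permutation matrix $R_\pi$, which merely relabels the indices, so I will relabel freely whenever convenient.

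For the reformulation of $(a)$ I would first record that for any $(i,j)\in\rho^\times$ both $i$ and $j$ already lie in $(\rho(i)\cup\rho^{-1}(i))\cap(\rho(j)\cup\rho^{-1}(j))$: indeed $i\in\rho(i)$ and $i\in\rho^{-1}(j)$ by reflexivity and $(i,j)\in\rho$, and symmetrically for $j$. Since $[1,3]$ has only three elements, condition $(a)$ therefore says precisely that the remaining index $l$ is comparable to both $i$ and $j$ (this is exactly the reformulation in Remark \ref{re:equivalent formulation of (i)}). Now for $(a)\Rightarrow(b)$: as $\ca{A}_\rho\neq\ca{D}_3$ there is at least one comparable pair, and the reformulation forces the third index to be comparable to both of its members, so in fact every two indices of $[1,3]$ are comparable and $\rho$ is total. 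A finite total quasi-order possesses a least class and a greatest class; choosing $i$ in the least class yields $\rho(i)=[1,3]$ and $j$ in the greatest class yields $\rho^{-1}(j)=[1,3]$, and these may be taken distinct (when there is a single class, i.e.\ $\ca{A}_\rho=M_3$, any two distinct indices work). This is exactly $(b)$.

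For $(b)\Rightarrow(c)$ I would first note that the hypotheses of $(b)$ again make $\rho$ total: the source $i$ and sink $j$ are comparable to everything, and the third index is comparable to $i$ through the source and to $j$ through the sink. A total quasi-order on $[1,3]$ can be sorted, with equivalent indices grouped consecutively: there is an ordering $a_1,a_2,a_3$ of $[1,3]$ with $(a_p,a_q)\in\rho$ whenever $p\le q$, and conjugating by a suitable permutation matrix $R_\pi$ with $\pi(p)=a_p$ turns $\ca{A}_\rho$ into one of the block upper-triangular subalgebras $M_3,\ca{A}_{2,1},\ca{A}_{1,2},\ca{T}_3$, giving $(c)$. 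Finally, for $(c)\Rightarrow(a)$, using permutation-invariance I may assume $\ca{A}_\rho$ is itself block upper-triangular; such an algebra corresponds to a total quasi-order, since any two indices in blocks $b(i),b(j)$ are comparable according to whether $b(i)\le b(j)$, so every third index is comparable to both members of any comparable pair, which is the reformulated $(a)$.

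The routine parts are the reflexivity/transitivity bookkeeping needed for the reformulation; the only genuinely structural point — and the step I expect to require the most care — is $(b)\Rightarrow(c)$, namely verifying that a total quasi-order, after sorting, really has its equivalence classes appear as consecutive blocks so that the conjugated algebra is block upper-triangular (equivalently, that the total quasi-orders on $[1,3]$ are exactly the permutation-conjugates of the patterns $M_3,\ca{A}_{2,1},\ca{A}_{1,2},\ca{T}_3$). For $n=3$ this reduces to a short finite check.
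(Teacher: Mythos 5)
Your proof is correct, and although it runs the same cycle $(a)\Rightarrow(b)\Rightarrow(c)\Rightarrow(a)$ as the paper (with an essentially identical $(c)\Rightarrow(a)$), the two middle implications are organized around a genuinely different idea. The paper argues bare-hands: for $(a)\Rightarrow(b)$ it fixes $(i,j)\in\rho^\times$, takes the witness $k$, and splits into the cases $k\in\rho(j)$ and $k\in\rho^{-1}(j)$, producing the required pair directly by transitivity; for $(b)\Rightarrow(c)$ it conjugates so that $i\mapsto 1$, $j\mapsto 3$ and invokes the $M_3$-specific observation that an SMA in $M_3$ is block upper-triangular precisely when it contains the first row and the third column. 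You instead isolate a single invariant --- totality of $\rho$ --- show that each of (a), (b), (c) is equivalent to it (using for (a) exactly the reformulation of Remark \ref{re:equivalent formulation of (i)}), and then use the structure theory of finite total preorders: the quotient by mutual comparability is a chain, its least and greatest classes furnish the source $i$ and sink $j$ for (b), and sorting the indices class by class conjugates $\ca{A}_\rho$ into one of $M_3$, $\ca{A}_{2,1}$, $\ca{A}_{1,2}$, $\ca{T}_3$. Your route is more conceptual, and its core ingredient --- an SMA is permutation-conjugate to a block upper-triangular algebra if and only if its quasi-order is total --- holds verbatim for every $n$, not just $n=3$ (only the equivalence of (a) with totality is special to $n=3$, since for larger $n$ condition (i) of Theorem \ref{thm:main result} does not require the intersection to be all of $[1,n]$); the paper's route is shorter and avoids any preorder bookkeeping, at the price of being tied to $n=3$. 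The step you flagged as delicate is fine: with the paper's convention $R_\pi=\sum_k E_{k\pi(k)}$ one has $R_\pi E_{ij}R_\pi^{-1}=E_{\pi^{-1}(i)\,\pi^{-1}(j)}$, so your normalization $\pi(p)=a_p$ gives $R_\pi E_{a_pa_q}R_\pi^{-1}=E_{pq}$; the whole upper triangle then lies in the support because the classes are sorted increasingly, while a strictly lower-triangular position $(q,p)$ with $p<q$ occurs exactly when $a_p$ and $a_q$ lie in the same class, i.e.\ exactly on consecutive diagonal blocks, which is the asserted block upper-triangular form.
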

	\begin{proof}
		For $\pi \in S_3$, denote the quasi-order $$\rho' := \{(\pi(i),\pi(j)) : (i,j) \in \rho\}.$$
		Note that for all $i,j \in [1,3]$ we have 
		\begin{equation}\label{eq:rho equivalence}
			\rho(i) = \rho^{-1}(j) = [1,3] \iff \rho'(\pi(i)) = (\rho')^{-1}(\pi(j)) = [1,3].
		\end{equation}
		Notice also that an SMA in $M_3$ is a block upper-triangular subalgebra if and only if it contains the first row and third column.
		
		\smallskip
		
		\noindent \fbox{$(a)\implies (b)$} Since $\ca{A}_\rho \ne \ca{D}_3$, choose some $(i,j) \in \rho^\times$. By (i), there exists $k \in [1,3] \setminus\{i,j\}$ such that
		$$k \in (\rho(i) \cup \rho^{-1}(i)) \cap (\rho(j) \cup \rho^{-1}(j)).$$
		We consider two cases:
		\begin{itemize}
			\item If $k \in \rho(j)$, then from $(i,j),(j,k) \in \rho$ it follows $(i,k) \in \rho$ and hence $\rho(i) =\rho^{-1}(k) = [1,3]$.
			\item If $k \in \rho^{-1}(j)$, then from $(i,j),(k,j) \in \rho$ it follows $\rho^{-1}(j) = [1,3]$. Moreover, since $k \in \rho(i) \cup \rho^{-1}(i)$, we obtain $(i,k) \in \rho$ or $(k,i) \in \rho$, which implies $\rho(i) = [1,3]$ or $\rho(k) = [1,3]$, respectively.
		\end{itemize}
		
		\smallskip
		
		\noindent \fbox{$(b) \implies (c)$} Let $\pi \in S_3$ be any permutation such that $\pi(i) = 1$ and $\pi(j) = 3$. Then by \eqref{eq:rho equivalence}, $R_{\pi}\ca{A}_\rho R_{\pi}^{-1} = \ca{A}_{\rho'}$ satisfies $\rho'(1) = (\rho')^{-1}(3) = [1,3]$, so $R_{\pi}\ca{A}_\rho R_{\pi}^{-1}$ is a block upper-triangular subalgebra of $M_3$.
		
		\smallskip
		
		\noindent \fbox{$(c) \implies (a)$} By assumption, there exists a permutation $\pi \in S_3$ such that $\rho'(1) = (\rho')^{-1}(3) = [1,3]$. Denote $i := \pi^{-1}(1)$ and $j := \pi^{-1}(3)$. Again, by \eqref{eq:rho equivalence}, we have $\rho(i) = \rho^{-1}(j) = [1,3].$
		Let $k \in [1,3]\setminus \{i,j\}$. Since $k \in \rho(i)$ and $k \in \rho^{-1}(j)$, we also have $\rho(k) \cup \rho^{-1}(k) = [1,3]$. Therefore, $\ca{A}_\rho$ satisfies (i).
	\end{proof}
	
	\color{black}
	\begin{proof}[Proof of Theorem {\ref{thm:main result}}]\phantom{x}
		
		\smallskip
		
		\noindent \fbox{$(i)\implies(ii)$} First note that the $n=1$ case is trivial, while the only SMA in $M_2$ which satisfies $(i)$ is $\ca{D}_2$. In the $n=3$ case, this implication follows directly from Lemma \ref{le:(i) iff parabolic}, Theorem \ref{thm:parabolic} and Remark \ref{re:invariant under similarity}. In the remainder of the proof, we can therefore assume $n \ge 4$ (although this is not necessary).
		
		\smallskip
		
		\noindent Since $\ca{A}_\rho$ is 2-free, in view of Proposition \ref{prop:necessarily multiplicative} (and Claim \ref{cl:phi is homogeneous}) without loss of generality by passing to the map $S^{-1}\phi((g^*)^{-1}(\cdot))S$ (where $g^*$ is the automorphism induced by $g$, as defined in \eqref{eq:inducedauto}) we can assume that
		\begin{equation}\label{eq:after conjugation}
			\phi(E_{ij}) = \begin{cases}
				E_{ij}, &\text{ if } (i,j) \in \rho_M^{\phi},\\
				E_{ji}, &\text{ if } (i,j) \in \rho_A^{\phi}
			\end{cases}
		\end{equation}
		where $\rho_M^{\phi}$ and $\rho_A^{\phi}$ are quasi-orders on $[1,n]$ defined by \eqref{eq:definition of rho_M and rho_A}.
		
		\begin{claim}\label{cl:rho_M and rho_A satisfy (i)}
			Both quasi-orders $\rho_M^{\phi}$ and $\rho_A^{\phi}$ satisfy (i). In particular, by Remark \ref{re:equivalent formulation of (i)}, both SMAs $\ca{A}_{\rho_M^{\phi}}$ and $\ca{A}_{\rho_A^{\phi}}$ are $2$-free.
		\end{claim}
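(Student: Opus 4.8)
The plan is to deduce condition (i) for each of $\rho_M^\phi$ and $\rho_A^\phi$ directly from condition (i) for $\rho$ (which holds by hypothesis, since we are proving $(i)\implies(ii)$) together with Claim \ref{cl:crucial property of rho_A and rho_M}. Since the roles of $\rho_M^\phi$ and $\rho_A^\phi$ are entirely symmetric, I would argue for $\rho_M^\phi$ and remark that the case of $\rho_A^\phi$ is verbatim the same. Throughout I would read condition (i) in the equivalent ``third common neighbour'' form supplied by Remark \ref{re:equivalent formulation of (i)}: it suffices to show that for every $(i,j) \in (\rho_M^\phi)^\times$ there is some $k \in [1,n]\setminus\{i,j\}$ lying in $(\rho_M^\phi(i) \cup (\rho_M^\phi)^{-1}(i)) \cap (\rho_M^\phi(j) \cup (\rho_M^\phi)^{-1}(j))$.

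So I would fix such a pair $(i,j) \in (\rho_M^\phi)^\times$. Since $\rho_M^\phi \subseteq \rho$, we automatically have $(i,j) \in \rho^\times$, so the hypothesis that $\ca{A}_\rho$ satisfies (i), again read through Remark \ref{re:equivalent formulation of (i)}, furnishes a $k \in [1,n]\setminus\{i,j\}$ with $k \in (\rho(i)\cup\rho^{-1}(i)) \cap (\rho(j)\cup\rho^{-1}(j))$. The heart of the matter is then to promote this $\rho$-witness to a $\rho_M^\phi$-witness, which is exactly what Claim \ref{cl:crucial property of rho_A and rho_M} enables: because $(i,j) \in \rho_M^\phi$, that claim places $\{i\}\times\rho(i)$, $\rho^{-1}(i)\times\{i\}$, $\{j\}\times\rho(j)$ and $\rho^{-1}(j)\times\{j\}$ all inside $\rho_M^\phi$. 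Hence if $k \in \rho(i)$ then $(i,k)\in\rho_M^\phi$, whereas if $k\in\rho^{-1}(i)$ then $(k,i)\in\rho_M^\phi$; either way $k \in \rho_M^\phi(i)\cup(\rho_M^\phi)^{-1}(i)$. The identical dichotomy applied to $j$ yields $k \in \rho_M^\phi(j)\cup(\rho_M^\phi)^{-1}(j)$. Thus $k$ is the desired common neighbour, so $\rho_M^\phi$ satisfies (i), and the asserted $2$-freeness is then immediate from the last sentence of Remark \ref{re:equivalent formulation of (i)}.

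There is no serious obstacle here: once (i) is phrased in the common-neighbour form of Remark \ref{re:equivalent formulation of (i)}, the whole argument reduces to the observation that Claim \ref{cl:crucial property of rho_A and rho_M} transports \emph{all} $\rho$-adjacencies at $i$ and at $j$ into the same part ($\rho_M^\phi$ or $\rho_A^\phi$) as the edge $(i,j)$ itself, so the witness $k$ supplied by (i) for $\rho$ survives unchanged in the corresponding part. The only point to keep straight is the bookkeeping of which of the four cases one is in (namely $k$ before or after $i$, and before or after $j$), but all four lead to the same conclusion, and no appeal to the explicit form of $\phi$ beyond Claim \ref{cl:crucial property of rho_A and rho_M} is needed.
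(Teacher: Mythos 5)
Your proof is correct and follows essentially the same route as the paper's: fix $(i,j)\in(\rho_M^\phi)^\times$, extract a third common neighbour $k$ for $(i,j)\in\rho^\times$ via Remark \ref{re:equivalent formulation of (i)}, and use Claim \ref{cl:crucial property of rho_A and rho_M} to transport each of the relevant $\rho$-adjacencies at $i$ and $j$ into $\rho_M^\phi$, with the $\rho_A^\phi$ case handled by symmetry. No gaps; the bookkeeping of the four cases is exactly what the paper does.
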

		\begin{claimproof}
			For simplicity, we prove the claim for $\rho_M^{\phi}$, as the argument for $\rho_A^{\phi}$ is almost identical. Suppose that $(i,j) \in (\rho_M^{\phi})^\times$. In particular, $(i,j) \in \rho^\times$ so by Remark \ref{re:equivalent formulation of (i)} there exists some $k \in [1,n]\setminus\{i,j\}$ such that
			$$k \in (\rho(i) \cup \rho^{-1}(i)) \cap (\rho(j) \cup \rho^{-1}(j)).$$
			Then $(i,k) \in \rho$ or $(k,i) \in \rho$ and similarly $(j,k)\in \rho$ or  $(k,j) \in \rho$. In either case, Claim \ref{cl:crucial property of rho_A and rho_M} implies that the corresponding elements of $\rho$ are in fact contained in $\rho_M^{\phi}$. Thus,
			$$k \in (\rho_M^{\phi}(i) \cup (\rho_M^{\phi})^{-1}(i)) \cap (\rho_M^{\phi}(j) \cup (\rho_M^{\phi})^{-1}(j)).$$
			
		\end{claimproof}
		
		\begin{claim}\label{cl:every rank-one matrix is in rhoM or rhoA}
			Every rank-one matrix in $\ca{A}_\rho$ is contained in $\ca{A}_{\rho_M^{\phi}}$ or in $\ca{A}_{\rho_A^{\phi}}$.
		\end{claim}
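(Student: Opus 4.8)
The plan is to exploit the rectangular shape of the support of a rank-one matrix together with the strong propagation encoded in Claim \ref{cl:crucial property of rho_A and rho_M}, which states that the ``colour'' of an off-diagonal pair (i.e.\ its membership in $\rho_M^\phi$ versus $\rho_A^\phi$) forces the same colour on every entire row and column passing through its two indices. First I would write a rank-one $A \in \ca{A}_\rho$ as $A = uv^*$ for nonzero $u,v \in \C^n$ and set $I := \supp u$ and $J := \supp v$, so that $\supp A = I \times J \subseteq \rho$. Since $\Delta_n \subseteq \rho_M^\phi \cap \rho_A^\phi$, the diagonal cells of $I \times J$ lie in both relations automatically, so proving $A \in \ca{A}_{\rho_M^\phi}$ reduces to showing that \emph{every} off-diagonal cell of the rectangle $I \times J$ lies in $\rho_M^\phi$; in other words, that the rectangle is ``monochromatic''.

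I would first dispense with the degenerate case $I \times J \subseteq \Delta_n$: a combinatorial rectangle contained in the diagonal forces $I = J = \{i\}$ for a single index $i$, whence $A$ is a scalar multiple of $E_{ii} \in \ca{D}_n \subseteq \ca{A}_{\rho_M^\phi}$. Otherwise I fix an off-diagonal cell $(i_0,j_0) \in I \times J$ (so $(i_0,j_0) \in \rho^\times$) and assume, without loss of generality, that $(i_0,j_0) \in \rho_M^\phi$. Applying Claim \ref{cl:crucial property of rho_A and rho_M} to $(i_0,j_0)$ and using $I \times J \subseteq \rho$ --- which gives $J \subseteq \rho(i_0)$ and $I \subseteq \rho^{-1}(j_0)$ --- immediately places the entire ``cross'' $(\{i_0\} \times J) \cup (I \times \{j_0\})$ inside $\rho_M^\phi$.

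The remaining work is to propagate from this cross to an arbitrary cell $(i,j) \in I \times J$. When the cell $(i,j_0)$ is off-diagonal, a second application of Claim \ref{cl:crucial property of rho_A and rho_M} to $(i,j_0) \in \rho_M^\phi$ yields $\{i\} \times \rho(i) \subseteq \rho_M^\phi$, and since $j \in J \subseteq \rho(i)$ we get $(i,j) \in \rho_M^\phi$; symmetrically, when $(i_0,j)$ is off-diagonal one propagates down column $j$ using $\rho^{-1}(j) \times \{j\} \subseteq \rho_M^\phi$. The only genuine subtlety --- and the part I expect to require care --- is the simultaneous coincidence $i = j_0$ and $j = i_0$, which forces $(i,j)$ to be the \emph{reversed} cell $(j_0,i_0)$; here $i_0,j_0 \in I \cap J$, so $(j_0,i_0) \in I \times J \subseteq \rho$, and I would close this case either via the clause $\rho^{-1}(i_0) \times \{i_0\} \subseteq \rho_M^\phi$ of Claim \ref{cl:crucial property of rho_A and rho_M} applied to $(i_0,j_0)$, or equivalently by feeding the chain $(j_0,i_0),(i_0,j_0) \in \rho$ into \eqref{eq:transitive elements are in M or in A}. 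Combining these cases gives $I \times J \subseteq \rho_M^\phi$, that is $A \in \ca{A}_{\rho_M^\phi}$ (with $\rho_A^\phi$ arising under the opposite normalization), which is exactly the assertion.
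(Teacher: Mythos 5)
Your proof is correct and follows essentially the same route as the paper: both exploit the rectangular support $I \times J \subseteq \rho$ of a rank-one matrix and propagate the $\rho_M^\phi$/$\rho_A^\phi$ colour across it using Claim \ref{cl:crucial property of rho_A and rho_M}, with \eqref{eq:transitive elements are in M or in A} available for the coincidence cases. The only difference is organizational --- the paper compares two arbitrary off-diagonal cells of the rectangle pairwise, while you anchor at one cell $(i_0,j_0)$ and fill the rectangle from its cross --- but the mechanism, including the careful treatment of the reversed cell $(j_0,i_0)$, is the same.
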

		\begin{claimproof}
			Assume $ab^* \in \ca{A}_\rho$ for some nonzero vectors $a,b \in \C^n$. We have
			$$(\supp a) \times (\supp b) \subseteq \rho.$$
			Let $(i,j),(k,l) \in ((\supp a) \times (\supp b)) \mathop{\cap} \rho^\times$ be distinct but otherwise arbitrary. We need to show that $(i,j),(k,l) \in \rho_M^{\phi}$ or $(i,j),(k,l) \in \rho_A^{\phi}$. If $j = k$ or $i = l$, this follows from \eqref{eq:transitive elements are in M or in A}. Otherwise, since $\{i,k\} \times \{j,l\} \subseteq \rho$, we have
			$$\begin{cases}
				(i,l) \in \rho^\times \implies (i,j),(i,l),(k,l) \in \rho^\times,\\
				(k,j) \in \rho^\times \implies (i,j),(k,j),(k,l) \in \rho^\times,
			\end{cases}$$
			so in either case we obtain the desired assertion from Claim \ref{cl:crucial property of rho_A and rho_M}.
		\end{claimproof}

		\begin{claim}\label{cl:identity on R}
			$\phi$ acts as the identity on $\ca{R} \cap \ca{A}_{\rho_M^{\phi}}$, and as transposition on $\ca{R} \cap \ca{A}_{\rho_A^{\phi}}$.
		\end{claim}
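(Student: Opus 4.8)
The plan is to treat the case $A\in\ca R\cap\ca A_{\rho_M^\phi}$ in detail; the case $A\in\ca R\cap\ca A_{\rho_A^\phi}$ is entirely symmetric, with every occurrence of $\phi(E_{ij})=E_{ij}$ replaced by $\phi(E_{ij})=E_{ji}$, every test relation replaced by its transpose, and the conclusion $\phi(A)=A$ replaced by $\phi(A)=A^t$. So fix a rank-one non-nilpotent $A=uv^*$ with $u,v\in\C^n$, $v^*u\neq 0$ and $(\supp u)\times(\supp v)=\supp A\subseteq\rho_M^\phi$. By the homogeneity of $\phi$ (Claim \ref{cl:phi is homogeneous}) I may rescale $v$ so that $v^*u=1$, turning $A$ into a rank-one idempotent. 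Claim \ref{cl:preserves rank-one} gives $\phi(A)=xy^*$ rank-one, and since $\phi$ preserves the spectrum, $\phi(A)$ has single nonzero eigenvalue $y^*x=1$; hence $\phi(A)$ is again a rank-one idempotent. It therefore suffices to show that $\phi(A)$ has the same range $\C u$ and kernel $v^\perp$ as $A$, i.e. $x\parallel u$ and $y\parallel v$.

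First I would confine the support of $\phi(A)$. Set $S:=(\supp u)\cup(\supp v)$. For $l\notin S$ we have $A\perp E_{ll}$, so Claim \ref{cl:preserves orthogonality} together with $\phi(E_{ll})=E_{ll}$ forces $\phi(A)\perp E_{ll}$, whence $\supp x,\supp y\subseteq S$ and $\phi(A)\in M_n^{\subseteq S}$ (Claim \ref{cl:phi preserves support} gives the same conclusion). If $S\neq[1,n]$ I would pass to the induced preserver $\psi:\ca A_\rho^{\flat S^c}\to M_{\abs{S}}$, $X\mapsto\phi(X^{\sharp S^c})^{\flat S^c}$ of Claim \ref{cl:restriction of phi}; by Lemma \ref{le:deletion of SMA is SMA} its domain is an SMA in $M_{\abs{S}}$, and since $\phi$ fixes the diagonal and obeys \eqref{eq:after conjugation}, the map $\psi$ is again normalized in the sense of \eqref{eq:after conjugation}, with $\rho_M^\psi,\rho_A^\psi$ the restrictions of $\rho_M^\phi,\rho_A^\phi$ to $S$. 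Under the isomorphism $(\cdot)^{\flat S^c}$ of Lemma \ref{le:flat is an isomorphism}, $A$ becomes a full-support rank-one non-nilpotent of $\ca A_\rho^{\flat S^c}$ lying in its multiplicative part, and $\phi(A)=A$ is equivalent to $\psi(A^{\flat S^c})=A^{\flat S^c}$. This reduces everything to the full-support case $S=[1,n]$.

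The full-support case is where I expect the real difficulty, since then no further restriction is available and $\ca A_\rho$ need not be block upper-triangular, so Theorems \ref{thm:parabolic} and \ref{thm:Semrl} do not apply verbatim; the one easy sub-case is $\supp u=\supp v=[1,n]$, which forces $\rho=[1,n]^2$ and $\ca A_\rho=M_n$, where Theorem \ref{thm:Semrl} together with \eqref{eq:after conjugation} immediately yields $\phi=\id$. In general I would determine $\C u$ and $v^\perp$ from $\phi(A)=xy^*$ by transporting incidence relations through $\phi$: a direct computation shows that for $(p,q)\in\rho_M^\phi$ one has $A\perp E_{pq}$ exactly when $p\notin\supp v$ and $q\notin\supp u$, and applying Claim \ref{cl:preserves orthogonality} with $\phi(E_{pq})=E_{pq}$ transports each such relation to $\phi(A)\perp E_{pq}$, constraining $\supp x$ and $\supp y$. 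The condition (i), inherited by $\rho_M^\phi$ via Claim \ref{cl:rho_M and rho_A satisfy (i)}, is precisely what should guarantee a supply of such test units rich enough to force $\supp x\subseteq\supp u$ and $\supp y\subseteq\supp v$. Feeding these support constraints into the two-by-two minor relations satisfied by both $A$ and $\phi(A)$, and using the already understood action of $\phi$ on matrices supported in a single row or column (these lie in $\overline{\ca R}$ by Lemma \ref{le:closure of R}), should pin down $x\parallel u$ and $y\parallel v$; as the nonzero eigenvalue is preserved, this gives $\phi(A)=A$. The delicate point, and hence the heart of the proof, is extracting from (i) exactly the orthogonality and commutativity relations needed to separate the rows of $u$ from the columns of $v$ when their supports overlap.
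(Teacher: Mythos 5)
Your setup (normalization to an idempotent, confining $\supp x,\supp y$ via orthogonality with $E_{ll}$, and the reduction via Claim \ref{cl:restriction of phi}) is sound and parallels parts of the paper's argument, but the heart of the proof --- forcing $x\parallel u$ and $y\parallel v$ rather than merely $\supp x\subseteq\supp u$ and $\supp y\subseteq\supp v$ --- is exactly the point you leave as ``should pin down,'' and the tools you propose cannot close it. All of your test objects are matrix units: the relations $A\perp E_{pq}$ (and likewise $A\leftrightarrow E_{pq}$) depend on $u$ and $v$ only through their supports, so transporting them through $\phi$ can only ever yield coordinate-vanishing conditions on $x$ and $y$. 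Once $\abs{\supp u}\ge 2$, any $x$ supported in $\supp u$ and any $y$ supported in $\supp v$ with $y^*x=v^*u$ satisfies every constraint you can generate this way, so matrix-unit tests provably cannot separate directions inside the support. The paper's proof supplies precisely the missing idea: for $j\in(\supp a)\cap(\supp b)$ and each $i\ne j$ it builds the test matrix $(\overline{b_j}e_i-\overline{b_i}e_j)(\overline{a_i}e_j-\overline{a_j}e_i)^*$, whose entries depend on the actual entries of $a,b$; shows it lies in $\overline{\ca{R}}$ by Lemma \ref{le:closure of R}; proves $\phi$ fixes it by restricting to a $3\times 3$ SMA $\ca{A}_\rho^{\flat(\{i,j,k\}^c)}$ (with $k$ the witness from condition (i)) and invoking the already-settled $n=3$ case; and then transports the orthogonality $ab^*\perp A$ to get $x\perp\{a\}^\perp$ and $y\perp\{b\}^\perp$, i.e.\ $x\parallel a$, $y\parallel b$, after which equating traces finishes. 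Nothing in your proposal plays the role of these $a,b$-dependent test matrices.

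Two further steps in your outline are unjustified. First, your appeal to ``the already understood action of $\phi$ on matrices supported in a single row or column'' is circular: such matrices lie in $\overline{\ca{R}}$, and determining $\phi$ on them is part of what Claim \ref{cl:identity on R} (plus continuity) establishes; prior to this claim only the images of matrix units, diagonal matrices, and their scalar multiples are known. Second, your parenthetical claim that after restriction to $S=(\supp u)\cup(\supp v)$ the induced map is ``again normalized'' with condition (i) inherited is false in general: Claim \ref{cl:rho_M and rho_A satisfy (i)} concerns $\rho_M^{\phi}$ as a quasi-order on all of $[1,n]$, and condition (i) for a pair $(i,j)$ with $i,j\in S$ may only have witnesses $k$ outside $S$, so $\ca{A}_\rho^{\flat S^c}$ need not satisfy (i) (it may not even be $2$-free). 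This is why the paper never restricts to $S$ wholesale, but only to carefully chosen triples $\{i,j,k\}$ that contain the witness guaranteed by (i), verifying by hand that the resulting $3\times 3$ SMA satisfies (i).
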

		\begin{claimproof}
			For concreteness assume that $ab^* \in \ca{R} \cap \ca{A}_{\rho_M^{\phi}}$ for some (nonzero) vectors $a,b \in \C^n$, as the other case is similar. In view of Claim \ref{cl:preserves rank-one}, denote $\phi(ab^*) = xy^*$ for some nonzero $x,y \in \C^n$. As $b^*a \ne 0$, we can fix some $$j \in (\supp a) \cap (\supp b).$$ Let $i \in [1,n]\setminus \{j\}$ be arbitrary and consider the matrix
			$$A := (\overline{b_j}e_i - \overline{b_i}e_j)(\overline{a_i}e_j - \overline{a_j}e_i)^*. $$
			Note that
			$$(\overline{b_j}e_i - \overline{b_i}e_j)e_i^* = \begin{cases}
				\overline{b_j}E_{ii} - \overline{b_i} E_{ji}, &\quad\text{ if }i \in \supp b,\\
				\overline{b_j}E_{ii}, &\quad\text{ if }i \notin \supp b,
			\end{cases}$$
			$$e_i (\overline{a_i}e_j - \overline{a_j}e_i)^* = \begin{cases}
				a_iE_{ij}-a_jE_{ii}, &\quad\text{ if }i \in \supp a,\\
				-a_jE_{ii}, &\quad\text{ if }i \notin \supp a.
			\end{cases}$$
			In all cases, these matrices belong to $\ca{A}_{\rho}$, so by Lemma \ref{le:closure of R} we have $A \in \overline{\ca{R}}$ and in particular $A \in \ca{A}_\rho$.
			
			\smallskip
			
			We claim that $\phi(A) = A$. Consider the following cases:
			\begin{itemize}
				\item If $i \notin (\supp a) \cup (\supp b)$, then $A = -a_j\overline{b_j} E_{ii}$ and hence $\phi(A) = A$ by the homogeneity of $\phi$ (Claim \ref{cl:phi is homogeneous}).
				\item Suppose that $i \in \supp a$. Then $(i,j) \in \supp A \subseteq \{i,j\}\times\{i,j\}$, so $(i,j) \in \rho^\times$. By (i) there exists $k \in [1,n]\setminus \{i,j\}$ such that
				$$k \in (\rho(i) \cup \rho^{-1}(i)) \cap (\rho(j) \cup \rho^{-1}(j)).$$
				It is immediate that
				$$\{i,j,k\} \subseteq (\rho(i) \cup \rho^{-1}(i)) \cap (\rho(j) \cup \rho^{-1}(j)) \cap (\rho(k) \cup \rho^{-1}(k)),$$
				which via Lemma \ref{le:deletion of SMA is SMA} implies that the SMA $\ca{A}_\rho^{\flat(\{i,j,k\}^c)} \subseteq M_3$ satisfies (i). Furthermore, since $$(i,j) \in (\supp a) \times (\supp b) = \supp(ab^*) \subseteq \rho_M^{\phi},$$
				by Claim \ref{cl:crucial property of rho_A and rho_M} we conclude that
				$$(\{i,j,k\} \times \{i,j,k\}) \cap \rho \subseteq \rho_M^{\phi}.$$
				In particular, by \eqref{eq:after conjugation}, $\phi$ acts as the identity on all matrix units supported in $(\{i,j,k\} \times \{i,j,k\}) \cap \rho$. We can now invoke Lemma \ref{le:deletion of SMA is SMA} and Claim \ref{cl:restriction of phi} with the $n=3$ case (which was already covered) to conclude that the map
				$$\psi : \ca{A}_\rho^{\flat(\{i,j,k\}^c)} \to M_3, \qquad X \mapsto \phi(X^{\sharp (\{i,j,k\}^c)})^{\flat(\{i,j,k\}^c)}$$
				is a Jordan embedding. Since $\psi$ acts as the identity on all matrix units of $\ca{A}_\rho^{\flat(\{i,j,k\}^c)}$, we conclude that $\psi$ is the identity map. In particular, Claim \ref{cl:phi preserves support} implies that $\phi(A) = A$.
				\item Suppose that $i \in \supp b$. Then $(j,i) \in \rho^\times$ so by the symmetry of our assumption (i), the exact same discussion as above yields $\phi(A) = A$.
			\end{itemize}
			Now notice that $ab^* \perp A$ so by Claim \ref{cl:preserves orthogonality} we obtain
			$$xy^* = \phi(ab^*) \perp \phi(A) = A \implies (\overline{a_i}e_j - \overline{a_j}e_i)^*x = y^*(\overline{b_j}e_i - \overline{b_i}e_j) = 0.$$
			Overall, it follows
			$$x \perp \{\overline{a_i}e_j - \overline{a_j}e_i : i \in [1,n]\setminus \{j\}\}, \qquad y \perp \{\overline{b_j}e_i - \overline{b_i}e_j : i \in [1,n]\setminus \{j\}\}.$$
			In fact, these sets are bases for $\{a\}^\perp$ and $\{b\}^\perp$ respectively. We conclude $x\parallel a$ and $y \parallel b$, which implies $\phi(ab^*) \parallel ab^*$. Equating the traces yields $\phi(ab^*) = ab^*$.
		\end{claimproof}
		
		\begin{claim}\label{cl:penultimate step}
			$\phi$ acts as the identity on $\ca{A}_{\rho_M^{\phi}}$, and as transposition on $\ca{A}_{\rho_A^{\phi}}$.
		\end{claim}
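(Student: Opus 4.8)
The plan is to bootstrap from the rank-one idempotents, on which $\phi$ is already pinned down by Claim \ref{cl:identity on R}, to the dense supply of diagonalizable matrices furnished by Lemma \ref{le:diagonals are dense}, and then pass to the limit by continuity. I treat $\rho_M^\phi$ in detail; the argument for $\rho_A^\phi$ is verbatim the same with ``identity'' replaced by ``transposition''.

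Since $\ca{A}_{\rho_M^\phi}$ is itself an SMA, Lemma \ref{le:diagonals are dense} shows that the matrices $M = S\diag(\lambda_1,\ldots,\lambda_n)S^{-1}$, with $S \in \ca{A}_{\rho_M^\phi}^\times$ and $\lambda_1,\ldots,\lambda_n \in \C$ pairwise distinct, form a dense subset of $\ca{A}_{\rho_M^\phi}$. For each such $M$, the spectral projections $P_k := SE_{kk}S^{-1}$ are rank-one idempotents, hence rank-one non-nilpotents, and they lie in $\ca{A}_{\rho_M^\phi}$ because $S, S^{-1}, E_{kk} \in \ca{A}_{\rho_M^\phi}$. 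Thus $P_k \in \ca{R} \cap \ca{A}_{\rho_M^\phi}$, and Claim \ref{cl:identity on R} gives $\phi(P_k) = P_k$ for every $k$.

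Next I would write $M = \sum_{k=1}^n \lambda_k P_k$ and use that the $P_k$ are pairwise commuting diagonalizable matrices in $\ca{A}_\rho$, all of whose partial sums remain diagonalizable (the family being simultaneously diagonalizable). A short induction applying Claim \ref{cl:phi is homogeneous}(a) to successive partial sums then yields $\phi(M) = \sum_{k=1}^n \lambda_k \phi(P_k) = \sum_{k=1}^n \lambda_k P_k = M$, so $\phi$ fixes every matrix in the dense subset above and, by continuity, acts as the identity on all of $\ca{A}_{\rho_M^\phi}$. For $\rho_A^\phi$ the same projections satisfy $P_k \in \ca{R} \cap \ca{A}_{\rho_A^\phi}$, so Claim \ref{cl:identity on R} gives $\phi(P_k) = P_k^t$; the identical computation produces $\phi(M) = \sum_{k=1}^n \lambda_k P_k^t = M^t$, which continuity promotes to $\phi(X) = X^t$ on all of $\ca{A}_{\rho_A^\phi}$.

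The two points demanding care, rather than genuine difficulty, are verifying that the eigenprojections of $M$ really belong to the subalgebra $\ca{A}_{\rho_M^\phi}$ (so that Claim \ref{cl:identity on R} applies) and upgrading the two-term additivity of Claim \ref{cl:phi is homogeneous}(a) to a finite sum over a commuting family of diagonalizable matrices via the induction on partial sums. Both are routine, after which density together with continuity closes the argument.
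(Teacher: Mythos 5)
Your proposal is correct and takes essentially the same route as the paper: both arguments pin down $\phi$ on the rank-one spectral projections $SE_{kk}S^{-1} \in \ca{R} \cap \ca{A}_{\rho_M^{\phi}}$ (resp.\ $\ca{R} \cap \ca{A}_{\rho_A^{\phi}}$) via Claim \ref{cl:identity on R}, assemble these values to determine $\phi$ on $S\diag(\lambda_1,\ldots,\lambda_n)S^{-1}$, and conclude by Lemma \ref{le:diagonals are dense} and continuity. The only cosmetic difference lies in the assembly step: the paper invokes Claim \ref{cl:preserves diagonalizability} to get a single $T$ with $\phi(SDS^{-1}) = TDT^{-1}$ and then matches the two linear maps $D \mapsto TDT^{-1}$ and $D \mapsto (SDS^{-1})^t$ on the basis $\{E_{jj}\}$ of $\ca{D}_n$, whereas you rebuild the finite linear combination by induction from the two-term additivity of Claim \ref{cl:phi is homogeneous}(a); both mechanisms are valid and rest on the same underlying results.
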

		\begin{claimproof}
			For the sake of variety, we prove the second claim, as the first one is similar. By Claim \ref{cl:preserves diagonalizability}, for each $S \in \ca{A}_{\rho_A^{\phi}}^\times$ there exists $T \in M_n^\times$ such that 
			$$\phi(SDS^{-1}) = TDT^{-1}, \qquad \text{ for all }D \in \ca{D}_n.$$
			In particular, for all $j \in [1,n]$ we have
			$$TE_{jj}T^{-1} = \phi(\underbrace{SE_{jj}S^{-1}}_{\in \ca{R} \cap \ca{A}_{\rho_A^{\phi}}}) \stackrel{\text{Claim } \ref{cl:identity on R}}= (SE_{jj}S^{-1})^t.$$
			Hence, by the linearity of the maps $T(\cdot)T^{-1}$ and $(S(\cdot)S^{-1})^t$, for all $D \in \ca{D}_n$ we have
			$$\phi(SDS^{-1}) = TDT^{-1} = (SDS^{-1})^t.$$
			The Claim now follows by the continuity of $\phi$ from Lemma \ref{le:diagonals are dense} applied to $\ca{A}_{\rho_A^{\phi}}$.
		\end{claimproof}
		
		\begin{claim}\label{cl:the idempotent corresponding to rho_M}
			Let $P \in \ca{D}_n$ be a diagonal idempotent defined by
			$$P_{ii} = 1 \iff \text{there exists $j \in [1,n] \setminus \{i\}$ such that $(i,j) \in \rho^\phi_M$ or $(j,i) \in \rho^\phi_M$}.$$
			Then $P\in Z(\ca{A}_\rho)$, $PX \in \ca{A}_{\rho^\phi_M}$ and $(I-P)X \in \ca{A}_{\rho^\phi_A}$ for all $X \in \ca{A}_\rho$.
		\end{claim}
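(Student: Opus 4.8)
The plan is to first establish the key dichotomy that the definition of $P$ induces a clean bipartition of the ``off-diagonally active'' indices, and then to derive centrality and both support containments as formal consequences. The only structural input needed is Claim \ref{cl:crucial property of rho_A and rho_M}, together with the disjointness $\rho^\phi_M \cap \rho^\phi_A = \Delta_n$.

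First I would record the immediate direction: if $(i,j) \in (\rho^\phi_M)^\times$, then by the very definition of $P$ we have $P_{ii} = P_{jj} = 1$. The crux is the complementary statement
$$(i,j) \in (\rho^\phi_A)^\times \implies P_{ii} = P_{jj} = 0.$$
To prove it, suppose $(i,j) \in (\rho^\phi_A)^\times$ but, for contradiction, $P_{ii} = 1$, so there is some $k \ne i$ with $(i,k) \in \rho^\phi_M$ or $(k,i) \in \rho^\phi_M$. Applying Claim \ref{cl:crucial property of rho_A and rho_M} to $(i,j) \in \rho^\phi_A$ gives $\{i\}\times\rho(i) \subseteq \rho^\phi_A$ and $\rho^{-1}(i)\times\{i\} \subseteq \rho^\phi_A$; since the pair $(i,k)$ (resp.\ $(k,i)$) lies in $\{i\}\times\rho(i)$ (resp.\ $\rho^{-1}(i)\times\{i\}$), it would lie in $\rho^\phi_M \cap \rho^\phi_A = \Delta_n$, contradicting $k \ne i$. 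Hence $P_{ii} = 0$, and since $j$ appears symmetrically in Claim \ref{cl:crucial property of rho_A and rho_M}, the same argument gives $P_{jj} = 0$. Combined with the immediate direction, this shows that every $(i,j) \in \rho^\times$ satisfies $P_{ii} = P_{jj}$, the common value being $1$ when $(i,j) \in \rho^\phi_M$ and $0$ when $(i,j) \in \rho^\phi_A$.

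For centrality, since $P \in \ca{D}_n \subseteq \ca{A}_\rho$, it suffices to check that $P$ commutes with every matrix unit $E_{ij}$ with $(i,j) \in \rho$. As $PE_{ij} = P_{ii}E_{ij}$ and $E_{ij}P = P_{jj}E_{ij}$, this reduces exactly to $P_{ii} = P_{jj}$, which holds by the previous paragraph; therefore $P \in Z(\ca{A}_\rho)$. Finally, for the support containments I would argue entrywise. For $X \in \ca{A}_\rho$ we have $(PX)_{ij} = P_{ii}X_{ij}$, so a nonzero entry forces $(i,j) \in \rho$ and $P_{ii} = 1$; if $i = j$ this pair lies in $\Delta_n \subseteq \rho^\phi_M$, while if $i \ne j$ the dichotomy excludes $(i,j) \in \rho^\phi_A$ (which would give $P_{ii} = 0$), leaving $(i,j) \in \rho^\phi_M$, whence $PX \in \ca{A}_{\rho^\phi_M}$. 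Symmetrically, $((I-P)X)_{ij} = (1 - P_{ii})X_{ij}$, and a nonzero entry forces $P_{ii} = 0$; here the off-diagonal case $(i,j) \in (\rho^\phi_M)^\times$ is excluded (it would give $P_{ii} = 1$), so $(i,j) \in \rho^\phi_A$, giving $(I-P)X \in \ca{A}_{\rho^\phi_A}$. The only genuinely non-routine step is the dichotomy in the second paragraph; once it is in place, centrality and the two containments are pure bookkeeping on the diagonal idempotent $P$.
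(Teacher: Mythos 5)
Your proposal is correct and follows essentially the same route as the paper: the paper likewise reduces everything to the matrix units $E_{ij}$, notes that $(i,j)\in(\rho^\phi_M)^\times$ forces $P_{ii}=P_{jj}=1$ by definition, and invokes Claim \ref{cl:crucial property of rho_A and rho_M} (together with $\rho^\phi_M\cap\rho^\phi_A=\Delta_n$) to get $P_{ii}=P_{jj}=0$ when $(i,j)\in(\rho^\phi_A)^\times$, after which centrality and the two containments are the same bookkeeping you carry out. The only difference is presentational: you spell out the contradiction argument behind the dichotomy, which the paper leaves implicit in its citation of Claim \ref{cl:crucial property of rho_A and rho_M}.
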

		\begin{claimproof}
			A variant of this argument (when $\phi$ is assumed to be a Jordan homomorphism) already appears in \cite[Lemma 4.8 (c)]{GogicTomasevic} and a similar proof applies here. Indeed, it suffices to show that for all $(i,j) \in \rho$ we have
			\begin{equation}\label{eq:three claims}
				P \leftrightarrow E_{ij}, \qquad PE_{ij} \in \ca{A}_{\rho^\phi_M}, \qquad (I-P)E_{ij} \in \ca{A}_{\rho^\phi_A}.
			\end{equation} Since all three claims are trivially true when $i=j$, fix $(i,j) \in \rho^\times$. We consider two separate cases:
			\begin{itemize}
				\item If $(i,j) \in \rho^\phi_M$, then $P_{ii} = P_{jj} = 1$ by definition, so
				$$PE_{ij} = \underbrace{E_{ij}}_{\in \ca{A}_{\rho^\phi_M}} = E_{ij}P \implies (I-P)E_{ij} = 0 \in \ca{A}_{\rho^\phi_A},$$
				which establishes \eqref{eq:three claims}.
				
				\item If $(i,j) \in \rho^\phi_A$, then by Claim \ref{cl:crucial property of rho_A and rho_M}, $P_{ii} = P_{jj} = 0$. Hence,
				$$PE_{ij} = \underbrace{0}_{\in \ca{A}_{\rho^\phi_M}} = E_{ij}P \implies (I-P)E_{ij} = E_{ij} \in \ca{A}_{\rho^\phi_A},$$
				so again \eqref{eq:three claims} follows.
			\end{itemize}
		\end{claimproof}
		\begin{claim}
			Let $P\in Z(\ca{A}_\rho)$ be the idempotent from Claim \ref{cl:the idempotent corresponding to rho_M}. Then for all $X \in \ca{A}_\rho$ we have
			$$\phi(X) = PX + (I-P)X^t.$$
			In particular, $\phi$ is a Jordan embedding.
		\end{claim}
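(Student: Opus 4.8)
The plan is to prove the exact formula by identifying $\phi$ with the map $\Psi(X) := PX + (I-P)X^t$, which is itself visibly a Jordan embedding: it is the sum of the algebra homomorphism $X \mapsto PX$ and the algebra antihomomorphism $X \mapsto (I-P)X^t$, whose ranges lie in $PM_nP$ and $(I-P)M_n(I-P)$ respectively and are therefore orthogonal because $P$ is a central idempotent; a sum of a homomorphism and an antihomomorphism with orthogonal ranges is a Jordan homomorphism, and injectivity is immediate. (Equivalently, $\Psi$ is the special case $S=I$, $g$ trivial of the canonical form in Theorem \ref{thm:Jordan embeddings of SMAs}.) So it suffices to check that $\phi$ and $\Psi$ agree pointwise.

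First I would fix $X \in \ca{A}_\rho$ and write $X = PX + (I-P)X$. By Claim \ref{cl:the idempotent corresponding to rho_M} we have $PX \in \ca{A}_{\rho_M^\phi}$ and $(I-P)X \in \ca{A}_{\rho_A^\phi}$, so Claim \ref{cl:penultimate step} gives $\phi(PX)=PX$ and $\phi((I-P)X)=((I-P)X)^t$. Thus the identity would follow at once from additivity across this decomposition, since
$$\phi(X) = \phi(PX) + \phi((I-P)X) = PX + ((I-P)X)^t = PX + (I-P)X^t,$$
where the last equality uses that $P=P^t$ commutes with $X^t$ (transpose the relation $PX=XP$ coming from $P \in Z(\ca{A}_\rho)$). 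The subtlety is that the only additivity available is Claim \ref{cl:phi is homogeneous}(a), valid for \emph{commuting diagonalizable} summands. Now $PX$ and $(I-P)X$ do commute, indeed they are orthogonal since $PX\cdot(I-P)X = P(I-P)X^2 = 0$ and symmetrically, but they need not be diagonalizable.

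To remove this obstacle I would argue by density. Using Lemma \ref{le:diagonals are dense}, pick diagonalizable $C_k \in \ca{A}_\rho$ with $C_k \to X$, and perform the splitting \emph{after} approximation: set $A_k := PC_k$ and $B_k := (I-P)C_k$. Since $P \in Z(\ca{A}_\rho)$ commutes with $C_k$, the idempotent $P$ is simultaneously diagonalizable with $C_k$, so $A_k$ and $B_k$ are diagonalizable; they are orthogonal (hence commuting) because $A_kB_k = P(I-P)C_k^2 = 0$; and by Claim \ref{cl:the idempotent corresponding to rho_M} they lie in $\ca{A}_{\rho_M^\phi}$ and $\ca{A}_{\rho_A^\phi}$ respectively. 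Applying Claim \ref{cl:phi is homogeneous}(a) and then Claim \ref{cl:penultimate step} yields
$$\phi(C_k) = \phi(A_k) + \phi(B_k) = PC_k + ((I-P)C_k)^t = PC_k + (I-P)C_k^t,$$
and letting $k \to \infty$, the continuity of $\phi$ gives $\phi(X) = PX + (I-P)X^t$.

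The genuinely delicate point is precisely this additivity step: $\phi$ is known to be additive only on commuting diagonalizable pairs, while the natural splitting $X = PX + (I-P)X$ may involve non-diagonalizable matrices. The key device is to approximate the whole of $X$ by a diagonalizable element of $\ca{A}_\rho$ and only then multiply by the central projections $P$ and $I-P$; centrality of $P$ is exactly what guarantees that the resulting pieces remain diagonalizable, mutually orthogonal, and confined to the respective sub-SMAs $\ca{A}_{\rho_M^\phi}$ and $\ca{A}_{\rho_A^\phi}$ on which the action of $\phi$ is already pinned down.
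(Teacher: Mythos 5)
Your proposal is correct and is essentially the paper's own argument: decompose along the central idempotent $P$, use Claim \ref{cl:the idempotent corresponding to rho_M} to place the pieces in $\ca{A}_{\rho_M^{\phi}}$ and $\ca{A}_{\rho_A^{\phi}}$, apply the additivity of Claim \ref{cl:phi is homogeneous} on commuting diagonalizable pairs together with Claim \ref{cl:penultimate step}, and finish by the density of diagonalizable matrices (Lemma \ref{le:diagonals are dense}) plus continuity. The only cosmetic differences are the order of operations (the paper proves the identity for diagonalizable $X$ and then extends by density, while you approximate first and pass to the limit) and that you spell out the easy verification that $X \mapsto PX + (I-P)X^t$ is itself a Jordan embedding, which the paper leaves implicit.
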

		\begin{claimproof}
			Fix a diagonalizable matrix $X \in \ca{A}_\rho$. Since the idempotent $P$ is central, from Theorem \ref{thm:inner diagonalization on SMA} it easily follows that $PX$ and $(I-P)X$ are both (in fact, simultaneously) diagonalizable. By Claim \ref{cl:the idempotent corresponding to rho_M}, we have $PX \in \ca{A}_{\rho^\phi_M}$, $(I-P)X \in \ca{A}_{\rho^\phi_A}$ and trivially $PX \perp (I-P)X$. Therefore, we have
			\begin{align*}
				\phi(X) &= \phi(PX + (I-P)X) \stackrel{\text{ Claim \ref{cl:phi is homogeneous}}}= \phi(PX) + \phi((I-P)X) \stackrel{\text{ Claim \ref{cl:penultimate step}}}= PX + ((I-P)X)^t \\
				&= PX + (I-P)X^t.
			\end{align*}
			It follows that the continuous maps $\phi$ and $P(\cdot) + (I-P)(\cdot)^t$ coincide on the set of all diagonalizable matrices in $\ca{A}_\rho$, so Lemma \ref{le:diagonals are dense} implies their overall equality. 
		\end{claimproof}
		
		\noindent The proof of $(i)\implies(ii)$ is now complete.
		
		\smallskip
		
		\noindent \fbox{$(ii)\implies (i)$} 
		Suppose that an SMA $\ca{A}_{\rho} \subseteq M_n$ fails to satisfy (i). Then by Remark \ref{re:equivalent formulation of (i)} there exists $(r,s) \in \rho^\times$ such that
		$$(\rho(r) \cup \rho^{-1}(r)) \cap (\rho(s) \cup \rho^{-1}(s)) = \{r,s\}.$$
		We consider two separate cases:
		
		\smallskip
		\setcounter{case}{0}
		\begin{case}
			Suppose that $(s,r) \in \rho$. Then by the transitivity of $\rho$ it easily follows 
			\begin{equation}\label{eq:rth row and column}
				\rho(r) =\rho^{-1}(r) = \rho(s) = \rho^{-1}(s) = \{r,s\}.
			\end{equation}
			Define a rank-two idempotent 
			$$P := E_{rr} + E_{ss} \in \ca{A}_{\rho}.$$
			Note that $P$ is central. Indeed, fix $(i,j) \in \rho$ and note that $i \in \{r,s\}$ if and only if $j \in \{ r,s\}$. On the other hand, we clearly have
			$$PE_{ij} = \begin{cases}
				E_{ij}, &\text{ if } i \in \{r,s\}, \\
				0, &\text{ if } i \notin \{r,s\},
			\end{cases} \qquad E_{ij}P = \begin{cases}
				E_{ij}, &\text{ if } j \in \{r,s\}, \\
				0, &\text{ if } j \notin \{r,s\},
			\end{cases}$$
			so $P \leftrightarrow E_{ij}$. If follows that $$\ca{A}_{\rho} = P \ca{A}_{\rho} \oplus (I - P)\ca{A}_\rho$$
			is an inner direct sum of algebras. Moreover, by \eqref{eq:rth row and column}, $P \ca{A}_{\rho} $ is isomorphic to $M_2$ so let $\psi : M_2 \to M_2$ be a slight modification of the nonlinear counterexample due to \cite[Example 7]{PetekSemrl} (in order to ensure its injectivity). Let $f : [0,+\infty) \to \Sph^1$ be any nonconstant continuous map such that $\lim_{t \to + \infty} f(t)=1$ (concretely, we can choose $f(t):= e^{\frac{i\pi}{t+1}}$). Define $\psi : M_2 \to M_2$ by
			$$\psi\left(\begin{bmatrix} a & b \\ c & d\end{bmatrix}\right):= \left\{\begin{array}{cc}
				\begin{bmatrix} a & 0 \\ c & d\end{bmatrix}, & \text{ if } b= 0, \\
				\begin{bmatrix} a & b\,f\left(\abs{\frac{c}b}\right) \\ c\,\overline{f\left(\abs{\frac{c}b}\right)} & d\end{bmatrix}, & \text{ otherwise.}
			\end{array}\right.$$
			Then $\psi$ is a nonlinear injective continuous spectrum and commutativity preserver (and hence not a Jordan homomorphism). Then the same holds for the map
			$$\phi : \ca{A}_{\rho} \to M_n, \qquad \phi(X) := \underbrace{\psi(X^{\flat\{r,s\}^c})^{\sharp\{r,s\}^c}}_{\in P\ca{A}_\rho} + \underbrace{(X^{\flat\{r,s\}})^{\sharp\{r,s\}}}_{=(I-P)X}.$$
		\end{case}
		
		\begin{case} Suppose that $(s,r) \notin \rho$. By the assumption and the transitivity of $\rho$ it easily follows that \begin{equation}\label{eq:r and s}
				\rho^{-1}(r) = \{r\}, \qquad \rho(s) = \{s\}.
			\end{equation} Each $X \in \ca{A}_\rho$ can be written as $$X = X^\standardstate + X_{rs}E_{rs},$$ where $$X^\standardstate := X - X_{rs}E_{rs} \in \ca{A}_{\rho}.$$        
			Consider the map $$f : \C \times \C \to \C, \qquad f(u,v):= \begin{cases}
				v, \qquad &\text{ if }\abs{u} \le \abs{v},\\
				v\abs{\frac{v}{u}}, \qquad &\text{ if }\abs{u} > \abs{v}.
			\end{cases}$$
			It is straightforward to check that $f$ is continuous, homogeneous, and that $f(u,\cdot) : \C \to \C$ is injective for each fixed $u \in \C$. Define a map 
			$$\phi : \ca{A}_\rho \to \ca{A}_\rho, \qquad \phi(X) := X^\standardstate + f(X_{ss}-X_{rr},X_{rs})E_{rs},$$
			that is
			$$\phi(X)_{ij} = \begin{cases}
				X_{ij}, &\quad\text{ if }(i,j) \ne (r,s),\\
				f(X_{ss}-X_{rr},X_{rs}), &\quad\text{ if }(i,j) = (r,s).\\
			\end{cases}$$
			We claim that $\phi$ is a continuous injective commutativity and spectrum preserver, but not a linear map (and hence not a Jordan homomorphism).
			\begin{itemize}
				\item The continuity of $\phi$ follows directly from the continuity of $f$.
				\item Using the Laplace expansion along the $s$-th row and the $r$-th column, we obtain
				$$k_X(x) = (X_{rr}-x)(X_{ss}-x)k_{X^{\flat{\{r,s\}}}}(x) = k_{X^\standardstate}(x).$$
				In particular, the spectrum of $X$ is equal to the spectrum of $X^\standardstate$. As $\phi(X)^\standardstate = X^\standardstate$, we conclude that $\phi$ is a spectrum preserver.
				\item Suppose that $\phi(X) = \phi(Y)$ for some $X,Y \in \ca{A}_\rho$. Immediately we obtain 
				\begin{enumerate}[(i)]
					\item $X^\standardstate = Y^\standardstate$,
					\item $f(X_{ss}-X_{rr},X_{rs}) = f(Y_{ss}-Y_{rr},Y_{rs})$,
				\end{enumerate}
				whence we conclude $f(X_{ss}-X_{rr},X_{rs}) = f(X_{ss}-X_{rr},Y_{rs})$.  This implies $X_{rs} = Y_{rs}$, since $f(X_{ss}-X_{rr},\cdot)$ is injective. Therefore, $X = Y$, so $\phi$ is injective.
				\item Suppose that $X,Y \in \ca{A}_\rho$. We have
				\begin{align*}
					XY &= (X^\standardstate + X_{rs}E_{rs})(Y^\standardstate + Y_{rs}E_{rs}) \\
					&= X^\standardstate Y^\standardstate + Y_{rs}X^\standardstate E_{rs} + X_{rs}E_{rs}Y^\standardstate\\
					&= X^\standardstate Y^\standardstate + Y_{rs}\left(\sum_{i\in\rho^{-1}(r)}X_{ir}E_{ir}\right) E_{rs} + X_{rs}E_{rs}\left(\sum_{i\in\rho(s)}Y_{si}E_{si}\right)\\
					&\leftstackrel{\eqref{eq:r and s}}= X^\standardstate Y^\standardstate + (X_{rr}Y_{rs} + X_{rs}Y_{ss})E_{rs}.
				\end{align*}
				Moreover, $(XY)^\standardstate = X^\standardstate Y^\standardstate$, as
				$E_{rr} (X^\standardstate Y^\standardstate) E_{ss} = 0$. Indeed, if $E_{rr} E_{ij}  E_{kl}  E_{ss}\ne 0$ for some $(i,j),(k,l) \in \rho \setminus \{(r,s)\}$, then $i = r$, $j = k$ and $l = s$, which implies that $j \in \rho(r) \cap \rho^{-1}(s) = \{r,s\}$; a contradiction. Similarly,
				$$YX = Y^\standardstate X^\standardstate + (Y_{rr}X_{rs} + Y_{rs}X_{ss})E_{rs}, \qquad (YX)^\standardstate = Y^\standardstate X^\standardstate.$$
				Hence,
				\begin{equation}\label{eq:commutativity criterion}
					X \leftrightarrow Y \iff \begin{cases}
						X^\standardstate \leftrightarrow Y^\standardstate,\\
						(X_{ss}-X_{rr})Y_{rs} = (Y_{ss}-Y_{rr})X_{rs}.
					\end{cases}
				\end{equation}
				Assume now $X \leftrightarrow Y$. Since $\phi(X)^\standardstate = X^{\standardstate}$ and $\phi(Y)^\standardstate = Y^{\standardstate}$, to show that $\phi(X) \leftrightarrow \phi(Y)$, it remains to verify that
				$$(X_{ss}-X_{rr})f(Y_{ss}-Y_{rr},Y_{rs}) = (Y_{ss}-Y_{rr})f(X_{ss}-X_{rr},X_{rs}).$$
				By the homogeneity of $f$, this is equivalent to
				$$f((X_{ss}-X_{rr})(Y_{ss}-Y_{rr}),(X_{ss}-X_{rr})Y_{rs}) = f((X_{ss}-X_{rr})(Y_{ss}-Y_{rr}),(Y_{ss}-Y_{rr})X_{rs}),$$
				which is true by \eqref{eq:commutativity criterion}. We conclude that $\phi$ preserves commutativity.
				\item That $\phi$ is not an additive map follows from
				$$\phi(2E_{rr}+E_{rs}) = 2E_{rr}+\frac12 E_{rs}, \qquad \phi(E_{rs}) = E_{rs}, \qquad \phi(2E_{rr}+2E_{rs}) = 2E_{rr}+2 E_{rs}.$$
			\end{itemize}
		\end{case}
		\noindent The proof of Theorem \ref{thm:main result} is now complete.
	\end{proof}
	
	\section{Examples and final remarks}\label{sec:examplesandremarks}
	In contrast to the previous cases of $M_n$, $\ca{T}_n$, and the general block upper-triangular subalgebras of $M_n$, note that for an arbitrary SMA $\ca{A}_\rho \subseteq M_n$, Jordan embeddings $\ca{A}_\rho \to M_n$ are not necessarily multiplicative or antimultiplicative, even when $\ca{A}_\rho$ satisfies the condition (i) of Theorem \ref{thm:main result}.
	
	\begin{example}\label{ex:noMAMP}
		Consider the quasi-order on $[1,6]$ defined by
		$$\rho := ([1,3]\times [1,3]) \cup ([4,6]\times [4,6]).$$
		Clearly, $\ca{A}_\rho = \diag(M_3,M_3)\subseteq M_6$ satisfies the condition (i) of Theorem \ref{thm:main result}. However, the Jordan automorphism $\phi$ of $\ca{A}_\rho$ given by 
		$
		\phi(\diag(X,Y)) := \diag(X,Y^t)
		$ is obviously neither multiplicative nor antimultiplicative.
	\end{example}
	Further, when $\mathcal{A}_\rho$ is a block-upper triangular subalgebra of $M_n$, it is easy to see that $\rho$ admits only trivial transitive maps. In that case by Theorem \ref{thm:Jordan embeddings of SMAs}  (or Theorem \ref{thm:parabolic}), all Jordan embeddings $\mathcal{A}_\rho \to M_n$ are necessarily rank(-one) preservers. This is no longer true for general SMAs $\ca{A}_\rho \subseteq M_n$ which satisfy the condition (i) of Theorem \ref{thm:main result}. 
	
	\begin{example}\label{ex:SMA-nonlinearOK-nontrivialtransitive}
		Consider the quasi-order
		$$\rho := \Delta_7 \cup ([1,3]\times [4,7]) \cup \{(1,3),(4,5),(6,7)\}$$
		on $[1,7]$ and the corresponding SMA
		$$\ca{A}_\rho := \begin{bmatrix}
			* &  0 &  * &  * &  * &  *  & *  \\
			0  & *  & 0 &  *  & *  & *  & *  \\
			0  & 0 &  * &  * &  * &  *  & *  \\
			0  & 0  & 0  & *  & *  & 0  & 0  \\
			0  & 0  & 0 &  0 &  * &  0 &  0  \\
			0 &  0  & 0  & 0  & 0  & *  & *  \\
			0 &  0  & 0  & 0  & 0  & 0  & *  
		\end{bmatrix} \subseteq \ca{T}_7.$$
		We have
		\begin{align*}
			\rho(1) \cup \rho^{-1}(1) &= \rho(3) \cup \rho^{-1}(3) = \{1,3,4,5,6,7\},\\
			\rho(2) \cup \rho^{-1}(2) &= \{2,4,5,6,7\},\\
			\rho(4) \cup \rho^{-1}(4) &= \rho(5) \cup \rho^{-1}(5) = \{1,2,3,4,5\},\\
			\rho(6) \cup \rho^{-1}(6) &= \rho(7) \cup \rho^{-1}(7) = \{1,2,3,6,7\}.
		\end{align*}
		One easily checks that $\ca{A}_\rho$ satisfies (i) of Theorem \ref{thm:main result} and that the map $$g : \rho \to \C^\times, \qquad g(i,j) := \begin{cases}
			2, &\quad \text{ if } (i,j) \in \{(2,4),(2,5)\},\\
			1, &\quad \text{ otherwise}
		\end{cases}$$
		is transitive. On the other hand, $g$ is nontrivial. Indeed, if $g$ separates through the map $s: [1,7]\to \C^\times$, we obtain
		$$
		\frac{s(1)}{s(4)}=g(1,4)=1, \qquad \frac{s(1)}{s(6)}=g(1,6)=1 \qquad \implies \qquad s(4)=s(1)=s(6),
		$$
		so
		$$
		2=g(2,4)=\frac{s(2)}{s(4)}=\frac{s(2)}{s(6)}=g(2,6)=1,
		$$
		which is a contradiction.
		
		\smallskip
		
		\noindent Further, as for the induced algebra automorphism $g^*$ of $\ca{A}_\rho$ we have
		$$
		g^*(E_{14}+E_{16}+E_{24}+E_{26})=E_{14}+E_{16}+2E_{24}+E_{26},
		$$
		it is clear that $g^*$ does not preserve rank-one matrices (see also \cite[Theorem~5.7]{GogicTomasevic}). 
	\end{example}
	
	\begin{remark}\label{rem:indispensability}
		For an SMA $\ca{A}_\rho \subseteq M_n$ which satisfies (i) of Theorem \ref{thm:main result}, we discuss the necessity of all assumptions in (ii). In view of the first paragraph of the proof of (i) $\implies$ (ii) in Theorem \ref{thm:main result}, we assume $n \geq 3$.
		\begin{itemize}
			\item Spectrum preserving is necessary to assume for all SMAs $\ca{A}_\rho$, as the map $\phi : \ca{A}_\rho \to \ca{A}_\rho$, $\phi(X):=2X$ shows (for a nonlinear variant of the example see \cite[Example 5.1]{GogicPetekTomasevic}).
			\item Commutativity preserving is necessary to assume for all SMAs $\ca{A}_\rho$. Indeed, when $\ca{A}_\rho \ne \ca{D}_n$, this follows from a small modification of \cite[Example 5.2]{GogicPetekTomasevic}, by defining the map $\phi : \mathcal{A}_\rho \to M_n$,
			$$\phi(X):=\diag(1,\ldots,1,e^{\det X}, 1, \ldots, 1) X \diag(1,\ldots,1,e^{-\det X}, 1, \ldots, 1),$$ where both $e^{\det X}$ and $e^{-\det X}$ stand at some position $i \in [1,n]$ for which $\rho^\times(i)\neq \emptyset$. On the other hand, when $\ca{A}_\rho = \ca{D}_n$, we can consider the map
			$$\phi : \ca{D}_n \to M_n, \qquad \phi(X) := X + X_{11}E_{2n},$$
			which is clearly a linear (hence continuous) injective spectrum preserver, but not a Jordan homomorphism.
			\item Continuity is necessary to assume for all SMAs $\ca{A}_\rho$. This follows from \cite[Example 5.3]{GogicPetekTomasevic}, without any change.
			\item Injectivity is necessary to assume if and only if $\ca{A}_\rho$ is not semisimple (note that by \cite{Coelho}, $\ca{A}_\rho$ is semisimple if and only if $\rho$ is symmetric). Indeed, suppose that $\ca{A}_\rho$ is semisimple and that $\phi : \ca{A}_\rho \to M_n$ is a continuous commutativity and spectrum preserver. In view of Remark \ref{re:invariant under similarity} and 
			\cite[p.\ 432]{Akkurt} (see also \cite[Lemma 3.2]{GogicTomasevic}), we can assume that $\ca{A}_\rho = \diag(M_{k_1}, \ldots, M_{k_p})$ where, by (i), each $k_j \ne 2$. By analysing the proofs of Claims \ref{cl:preserves characteristic polynomial}, \ref{cl:identity on diagonals}, \ref{cl:preserves diagonalizability} and \ref{cl:preserves zero-product} we observe that they do not require the injectivity of the map $\phi$. Therefore, without loss of generality, one can assume that $\phi$ acts as the identity map on $\ca{D}_n$. Further, using Claim \ref{cl:preserves zero-product} and a standard density argument, it easily follows that $\phi$ maps each diagonal block of $\ca{A}_\rho$ into itself. Therefore, for each $1 \le j \le k$ the map $\phi$ restricts to a continuous spectrum and commutativity preserver $\phi_j : M_{k_j} \to M_{k_j}$. If $k_j = 1$, clearly $\phi_j$ is the identity, while if $k_j \ge 3$ we apply Theorem \ref{thm:Semrl} to $\phi_j$ to conclude that it acts as the identity or as the transposition map. Putting everything back together, it follows that $\phi$ is a Jordan embedding. Now, on the other hand, suppose  that $\ca{A}_\rho$ is not semisimple. Again, in view of Remark \ref{re:invariant under similarity} and  \cite[p.\ 432]{Akkurt}, we can assume that $\ca{A}_\rho$ is in the block upper-triangular form with at least one nonzero entry in the strict block upper triangle. Then, as in \cite[Example 5.4]{GogicPetekTomasevic}, a map $\phi : \ca{A}_\rho \to M_n$ which only leaves the diagonal blocks intact and annihilates everything else is an example of a non-injective unital Jordan homomorphism (in particular, $\phi$ is a continuous spectrum and commutativity preserver).
		\end{itemize}
	\end{remark}

\end{document}